\documentclass[11pt]{amsart}
\usepackage{amsmath,amssymb,latexsym,esint,cite,mathrsfs}
\usepackage{verbatim,wasysym}
\usepackage[left=2.6cm,right=2.6cm,top=2.9cm,bottom=2.9cm]{geometry}
\usepackage[T1]{fontenc}
\usepackage{lmodern}
\usepackage[colorlinks=true,urlcolor=blue, citecolor=red,linkcolor=blue,
linktocpage,pdfpagelabels, bookmarksnumbered,bookmarksopen]{hyperref}
\usepackage{tikz,enumitem,graphicx, subfig, microtype, color}
\usepackage{epic,eepic}
\usepackage[hyperpageref]{backref}
\usepackage[english]{babel}
\usepackage{xcolor}

\allowdisplaybreaks

\renewcommand{\epsilon}{\varepsilon}

\numberwithin{equation}{section}
\newtheorem{theorem}{Theorem}[section]
\newtheorem{proposition}[theorem]{Proposition}
\newtheorem{lemma}[theorem]{Lemma}
\newtheorem{remark}[theorem]{Remark}

\newtheorem{corollary}[theorem]{Corollary}
\newtheorem{definition}[theorem]{Definition}
\theoremstyle{definition}

\newcommand{\eps}{\varepsilon}

\renewcommand{\O}{{\mathcal O}}

\newcommand{\R}{\mathbb{R}}
\newcommand{\J}{J}
\newcommand{\weakto}{\rightharpoonup}


\begin{document}

\title{Nonlocal problems with critical Hardy nonlinearity}

\author[W.\ Chen]{Wenjing Chen}
\author[S.\ Mosconi]{Sunra Mosconi}
\author[M.\ Squassina]{Marco Squassina}

\address[W.\ Chen]{School of Mathematics and Statistics \newline\indent
Southwest University \newline\indent
Chongqing 400715, People's Republic of China.}
\email{wjchen@swu.edu.cn}

\address[S.\ Mosconi]{Dipartimento di Matematica e Informatica \newline\indent
	Universit\`a degli Studi di Catania \newline\indent
	Viale A. Doria 6, 95125 Catania, Italy}
\email{mosconi@dmi.unict.it}

\address[M.\ Squassina]{Dipartimento di Matematica e Fisica \newline\indent
	Universit\`a Cattolica del Sacro Cuore \newline\indent
	Via dei Musei 41, I-25121 Brescia, Italy}
\email{marco.squassina@unicatt.it}

\subjclass[2010]{35J20, 35J60,  47G20}
\keywords{Fractional $p$-Laplacian problems, critical exponents, Sobolev-Hardy inequality}
\thanks{W.\ Chen is supported by the National Natural Science Foundation of China No.\ 11501468 and
	by the Natural Science Foundation of Chongqing cstc2016jcyjA0323.\
	S. Mosconi and M.\ Squassina are members of the Gruppo
	Nazionale per l'Analisi Matematica, la Probabilit\`a e le loro Applicazioni}

\begin{abstract}
By means of variational methods we establish existence and multiplicity of solutions for a class of nonlinear nonlocal problems involving the fractional
$p$-Laplacian and a combined Sobolev and Hardy nonlinearity at subcritical and
critical growth.
\end{abstract}

\maketitle

\begin{center}
	\begin{minipage}{8.3cm}
		\small
		\tableofcontents
	\end{minipage}
\end{center}


\section{Introduction}\label{intro}

Let $\Omega\subset \R^N$ be a smooth open bounded set containing $0$.\
In this  work we study the existence and multiplicity of solutions to the following nonlocal problem driven by the fractional $p$-Laplacian operator
\begin{eqnarray}\label{frac1}
\left\{ \arraycolsep=1.5pt
   \begin{array}{lll}
(-\Delta_p)^s u =\lambda |u|^{r-2}u+\mu\frac{|u|^{q-2}u}{|x|^\alpha}\ \   &
{\rm in}\ \Omega, \\[2mm]
u= 0 \ \  & {\rm in}\ \R^N\setminus\Omega,
\end{array}
\right.
\end{eqnarray}
being $0<s<1$, $p>1$, $\lambda,\mu>0$, $0\leq \alpha\leq ps<N$,
$p\leq r\leq p^*,$ $p\leq q\leq p^*_\alpha$, where
\begin{equation}
\label{critexp}
p^*:=\frac{Np}{N-ps},\qquad
p^*_\alpha:=\frac{(N-\alpha)p}{N-ps},
\end{equation}
are the fractional critical {\em Sobolev} and  {\em Hardy--Sobolev} exponents respectively.
The fractional $(p, s)$-Laplacian operator $(-\Delta_p)^s$ is
the differential of the convex functional
\[
u\mapsto \frac{1}{p}[u]_{s,p}^p := \frac{1}{p}\int_{\R^{2N}} \frac{|u(x) - u(y)|^p}{|x - y|^{N+ps}}\, dx \, dy
\]
defined on the Banach space (with respect to the norm $[u]_{s,p}$ defined above)
\[
W^{s,p}_0(\Omega):=\{u\in L^1_{\rm loc}(\R^N): u\equiv 0 \ \text{in $\R^N\setminus\Omega$ and }\, [u]_{s,p}<+\infty\}.
\]
This definition is consistent, up to a normalization constant, with the linear
fractional Laplacian $(-\Delta)^s$ for the case $p=2$. The exponents in \eqref{critexp}
arise from the general fractional Hardy-Sobolev inequality 
\begin{equation}
\label{HaSo}
\left(\int_{\R^N} \frac{|u|^{p^*_\alpha}}{|x|^\alpha}\, dx\right)^{1/p^*_\alpha}\leq C(N, p, \alpha)[u]_{s, p}.
\end{equation}
The latter is a scale invariant inequality and as such is critical for the embedding
\[
W^{s,p}_0(\Omega)\hookrightarrow L^{q}\left(\Omega, \frac{dx}{|x|^\alpha}\right)
\]
in the sense that the latter is continuous for any $q\in [1, p^*_\alpha]$ but (as long as $0\in \Omega$, as we are assuming) is compact if and only if $q<p^*_\alpha$\footnote{See the proof of Lemma \ref{sob-func} for the {\em if} part. When $q=p^*_\alpha$ the scale invariance of the Hardy-Sobolev inequality implies the failure of compactness as long as $0\in \Omega$: given $\varphi\in C^{\infty}_c(\Omega)$, set for $\eps\in \ ]0, 1[$ $\varphi_\eps(x):=\varphi(x/\eps)$, and consider the family $\{\varphi_\eps/[\varphi_\eps]_{s,p}\}_\eps$.} 

We are interested in obtaining solutions with sign information of \eqref{frac1} with particular emphasis on the critical case $q=p^*_\alpha$. The term $|u|^{r-2}u$ in \eqref{frac1} will always be assumed subcritical, i.e. $r<p^*$. We will say that the equation is {\em subcritical} if the singular term  satisifes $q<p^*_\alpha$ as well, and {\em critical} if $q=p^*_\alpha$. Sometimes we will specify the nature of the criticality in the equation saying that \eqref{frac1} is 
\begin{itemize}
 \item
 {\em pure Hardy critical} if $\alpha=ps$, $q=p^*_\alpha=p$;
 \item
 {\em Hardy-Sobolev critical} if $\alpha\in \ ]0, ps[$, $q=p^*_\alpha$;
 \item
 {\em Sobolev critical} if $\alpha=0$, $q=p^*_\alpha=p^*$.
 \end{itemize}
 A similar analysis could in principle be performed in the case when $r=p^*$ and $q<p^*_\alpha$, i.e., when the singular term is subcritical while the unweighted term is critical, but we feel that doing this here would overcomplicate an already intricated framework.

\bigskip 
 
{\em -- Overview of the method}

Our approach consists in finding {\em ground state} solutions for \eqref{frac1}, i.e. to find solution which minimize the corresponding energy functional 
\[
J(u)=\frac{1}{p}[u]_{s,p}^p-\frac{\lambda}{r}\int_\Omega|u|^r\, dx-\frac{\mu}{q}\int_{\Omega}\frac{|u|^q}{|x|^\alpha}\, dx,
\]
among, respectively, the set of all positive or sign-changing solutions to \eqref{frac1}. In order to do so we define the Nehari, positive Nehari and sign-changing Nehari sets as
\[
\begin{split}
{\mathcal N}:&=\{u\in W^{s, p}_0(\Omega)\setminus\{0\}:  \langle J'(u), u\rangle =0\},\\
{\mathcal N}_+:&=\{u\in {\mathcal N}: u^-=0\},\\
{\mathcal N}_{\rm sc}:&=\{u\in {\mathcal N}: u^\pm\neq 0, \langle J'(u), u^\pm\rangle =0\},
\end{split}
\]
where $u^{\pm}$ denote the positive and negative part of $u$, respectively, and $\langle \ , \ \rangle:(W^{s, p}_0(\Omega))^*\times W^{s,p}_0(\Omega)\to \R$ is the duality pairing. Clearly, any positive weak solution of \eqref{frac1} lies in ${\mathcal N}_+$ and any sign-changing one belongs to ${\mathcal N}_{\rm sc}$.

All these Nehari-type sets are nonempty (but still quite far from being manifolds), and minimizing the energy functional on the latters actually provides weak solutions to \eqref{frac1} which are therefore called the positive and sign-changing {\em ground states} for \eqref{frac1}. Let us now describe the obstacles and differences with the classical case we will encounter in trying to solve these minimization problems.

\bigskip

{\em -- Positive solutions}

During all the paper we will focus on the coercive (with respect to the Nehari sets) case so that $\inf_{\mathcal N} J> 0$. Existence of positive ground states is by now standard and follows from a classical mountain pass procedure, once one can prove that the ground energy $\inf_{\mathcal N} J$ lies below a suitable compactness threshold. This threshold is usually called the energy of a {\em bubble}: it is given considering a function realizing the best constant in \eqref{HaSo}, rescaling it, and computing the limit energy as the corresponding "bubble" concentrates to a point. Unfortunately, the explicit form of the functions realizing \eqref{HaSo} (called {\em Aubin-Talenti functions}) is unknown in the fractional quasilinear case, preventing a direct estimate of the ground energy. In the local case $s=1$, these optimizers where explicitly obtained in \cite{Aubin, Talenti} when $\alpha=0$ and in \cite{gy} for general $\alpha\in \ ]0, p[$. In the fractional case $s\in \ ]0, 1[$, these are known only in the linear setting $p=2$, see \cite{Lieb}. When $p\neq 2$ the lack of an explicit form has been circumvented in \cite{mpsy} using an a-priori decay estimate for the Aubin-Talenti functions proved in \cite{brasco} and a suitable truncation technique via composition. Here, we modify the technique of \cite{mpsy} to cater with the general Hardy-Sobolev case $\alpha>0$, thanks to a similar decay estimates recently proved in \cite{mm}.

\bigskip

{\em -- Sign-changing solutions in the subcritical case}

In the sign-changing case minimization over ${\mathcal N}_{\rm sc}$ is less trivial, even in the subcritical setting. Indeed, it is not even clear whether ${\mathcal N}_{\rm sc}$ is empty or not. In the classical case $s=1$ one readily has that ${\mathcal N}_{\rm sc}\neq \emptyset$, since the locality of the energy gives
\[
{\mathcal N}_{\rm sc}=\{u\in {\mathcal N}: u^\pm\in {\mathcal N}\}
\]
and a classical scaling argument separately on $u^+$ and $u^-$ gives suitable $t^\pm$ such that $t^+u^++t^-u^-\in {\mathcal N}_{\rm sc}$. This is not the case in the nonlocal setting, since in general (see Remark \ref{remneh})
\[
\text{$u$ is a sign-changing solution to \eqref{frac1}}\quad \Rightarrow\quad \text{$u^+ \notin {\mathcal N}$ or $u^- \notin {\mathcal N}$}.
\]
This is basically due to the nonlocal interactions between $u^+$ and $u^-$ in the term $[u]_{s,p}$, given by 
\[
[u]_{s,p}^p-[u^+]_{s,p}^p-[u^-]_{s,p}^p,
\]
which is always strictly positive if $u^\pm \neq 0$. Nevertheless, we prove in Lemma \ref{le51} that actually ${\mathcal N}_{\rm sc}\neq \emptyset$, allowing to obtain sign-changing ground states via minimization over the latter, at least in the subcritical case.

\bigskip

{\em --Sign-changing solutions in the critical case}

 In the critical case, trying to solve $\inf_{{\mathcal N}_{\rm sc}} J$ through the direct method of Calculus of Variations faces the problem that $J$ fails to be weakly sequentially lower semicontinuous for large values of $\mu$. 

Instead, we perform a so-called {\em quasi-critical} approximation as in \cite{CSS, Ta}, considering the sign-changing ground states $u_\eps$ of \eqref{frac1} with $q=p^*_\alpha-\eps$, to obtain in the limit $\eps\downarrow 0$ a sign-changing ground state $u$ for the critical problem. In doing so, it is essential to estimate the asymptotic behavior of the sign-changing ground energies during the approximation. The core estimate (Lemma~\ref{energy-control}) says that the sign-changing ground energies stay {\em strictly} below the positive ground energy plus the energy of a bubble. Then, to prove that the limit is sign-changing, we employ a nonlocal version of the Concentration-Compactness principle recently proved in \cite{MS}. The latter shows that if the limit $u$ of the sign-changing ground states is, say, positive, then the negative part must concentrate in the limit, producing at least the energy of a bubble. Therefore the limit energy turns out to be at least the positive ground energy plus the one of a bubble, contradicting the previous asymptotic estimate.

\bigskip

{\em -- The Hardy critical case}

When $\alpha=ps$ and $q=p$, then problem \eqref{frac1} becomes the pure Hardy-critical equation
\[
(-\Delta_p)^su=\mu\frac{|u|^{p-2}u}{|x|^{ps}}+\lambda |u|^{r-2}u.
\]
On one hand there simply is no room in this case to perform the quasi-critical approximation since lowering the critical exponent drastically changes the geometry of the functional. On the other one, the Hardy-critical case of \eqref{frac1} lacks sufficient regularity estimates since one expects the solutions to be unbounded. In this case we are able to construct sign-changing solution proving the lower-semicontinuity of $J$ via Concentration--Compactess as in \cite{Montefusco}, at least when $\mu$ is below the optimal Hardy constant (coercive case). Notice that, for larger values of $\mu$, $J$ fails to be coercive over ${\mathcal N}$ but the geometry of the functional is richer and it is possible that sign-changing solutions can be found through linking arguments.

\bigskip

{\em -- Main results}

In order to state our results, we define for $\alpha\in [0, ps]$, the first weighted Dirichlet eigenvalue as
\begin{equation}
\label{lambdaalpha}
\lambda_{1, \alpha}:=\inf_{u\in W^{s,p}_0(\Omega)\setminus \{0\}}\frac{[u]_{s,p}^p}{\|u\, |x|^{-\alpha/p}\|_{L^p}^p}>0,\qquad \lambda_{1, 0}:=\lambda_1.
\end{equation}

\begin{theorem}[Subcritical case]
	\label{bothsub}
	Suppose that $0\leq \alpha< ps$ and 
 \[
q\in [p, p^*_\alpha[, \quad r\in [p, p^*[,\qquad \max\{q, r\}>p,\qquad 
\begin{cases}
\lambda>0,\  \mu>0&\text{if $\min\{q, r\}>p$},\\
0<\lambda<\lambda_1,\  \mu>0&\text{if $r=p$},\\
\lambda>0,\ 0<\mu<\lambda_{1,\alpha}&\text{if $q=p$}.
\end{cases}
\]
Then \eqref{frac1} has a positive 
and a sign-changing solution, both of minimal energy.
\end{theorem}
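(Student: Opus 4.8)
The plan is to obtain the two solutions by minimizing the energy functional $J$ over the positive Nehari set $\mathcal{N}_+$ and the sign-changing Nehari set $\mathcal{N}_{\rm sc}$ respectively, exploiting the compactness of the embedding $W^{s,p}_0(\Omega)\hookrightarrow L^q(\Omega, dx/|x|^\alpha)$ that holds precisely because $q<p^*_\alpha$ (and $r<p^*$). First I would check that under the stated hypotheses on $\lambda,\mu$ the functional $J$ is coercive over $\mathcal{N}$: when $\min\{q,r\}>p$ this is automatic since the quadratic-in-norm leading term dominates after using the Nehari constraint; when $r=p$ (resp.\ $q=p$) one uses $\lambda<\lambda_1$ (resp.\ $\mu<\lambda_{1,\alpha}$) together with the variational characterization \eqref{lambdaalpha} to absorb the linear term and retain a positive fraction of $[u]_{s,p}^p$, which also gives $\inf_{\mathcal N} J>0$ and a uniform lower bound $[u]_{s,p}\geq c>0$ on $\mathcal N$. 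Then I would show $\mathcal{N}_+\neq\emptyset$ and, invoking Lemma \ref{le51}, that $\mathcal{N}_{\rm sc}\neq\emptyset$; for each $v$ with $v^\pm\neq 0$ one produces $t^\pm>0$ with $t^+v^++t^-v^-\in\mathcal{N}_{\rm sc}$ by the standard fibering analysis (the map $(t^+,t^-)\mapsto J(t^+v^++t^-v^-)$ has a unique interior critical point which is a strict maximum, because the leading term in each variable has exponent $p$ while the nonlinear terms have strictly larger exponents on the relevant part, thanks to $\max\{q,r\}>p$).

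Next I would run the direct method. Take a minimizing sequence $(u_n)$ for $J$ over $\mathcal{N}_+$ (resp.\ $\mathcal{N}_{\rm sc}$); coercivity gives boundedness in $W^{s,p}_0(\Omega)$, so up to a subsequence $u_n\weakto u$ in $W^{s,p}_0(\Omega)$, $u_n\to u$ strongly in $L^r(\Omega)$ and in $L^q(\Omega, dx/|x|^\alpha)$ (here the strict subcriticality of both exponents is essential, and the $L^q$ convergence uses the compactness proof referenced in Lemma \ref{sob-func}), and $u_n^\pm\to u^\pm$ in the same spaces with $u_n^\pm\weakto u^\pm$ in $W^{s,p}_0(\Omega)$. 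The lower bound $[u_n]_{s,p}\geq c>0$ combined with the Nehari identity $[u_n]_{s,p}^p=\lambda\|u_n\|_r^r+\mu\||x|^{-\alpha/p}u_n\|_q^q$ and the strong convergence of the right-hand side forces $\lambda\|u\|_r^r+\mu\||x|^{-\alpha/p}u\|_q^q>0$, hence $u\neq 0$; in the sign-changing case the analogous argument applied to $u_n^\pm$ separately (using $\langle J'(u_n),u_n^\pm\rangle=0$) gives $u^\pm\neq 0$. Now rescale: there exist $t^\pm>0$ (a single $t$ in the positive case) with $\bar u:=t^+u^++t^-u^-\in\mathcal{N}_{\rm sc}$ (resp.\ $tu\in\mathcal{N}_+$). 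By weak lower semicontinuity of $u\mapsto[u]_{s,p}^p$ and the strong convergence of the lower-order terms one gets, after a short computation exploiting that on $\mathcal N$ one has $J(u)=(\frac1p-\frac1{\sigma})[u]_{s,p}^p$-type expressions are monotone along the fiber, that $J(\bar u)\leq \liminf_n J(u_n)=\inf_{\mathcal N_{\rm sc}}J$; combined with $\bar u\in\mathcal{N}_{\rm sc}$ this yields $J(\bar u)=\inf_{\mathcal N_{\rm sc}}J$, and similarly in the positive case. Actually the cleanest route is: show $[u^\pm]_{s,p}\leq\liminf[u_n^\pm]_{s,p}$ cannot be a strict inequality at the minimizer, because a strict drop in the norm would, after rescaling back onto the Nehari set, strictly lower the energy below the infimum — a contradiction; hence $u_n\to u$ strongly and $u\in\mathcal{N}_+$ (resp.\ $\mathcal{N}_{\rm sc}$) is itself the minimizer.

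Finally I would upgrade the minimizer to a weak solution. For $\mathcal{N}_+$: a minimizer $u\geq 0$ of $J$ on $\mathcal{N}_+$ is a critical point of $J$ on all of $W^{s,p}_0(\Omega)$ by the usual Lagrange-multiplier / deformation argument (the Nehari set is a natural constraint since $\langle J'(u),u\rangle=0$ and the derivative of the constraint functional at $u$ is nonzero, using again $\max\{q,r\}>p$ and $[u]_{s,p}\geq c>0$), and then $u$ solves \eqref{frac1}; a strong maximum principle for $(-\Delta_p)^s$ gives $u>0$. For $\mathcal{N}_{\rm sc}$: here one argues by contradiction via a deformation/pseudo-gradient flow that keeps the sign structure, as is classical for sign-changing Nehari minimizers — if $J'(u)\neq 0$ one constructs a small perturbation of $t^+u^++t^-u^-$ that stays admissible and strictly decreases the energy. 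I expect \emph{the main obstacle} to be this last point in the sign-changing case: because $\mathcal{N}_{\rm sc}$ is not a manifold and the nonlocal interaction term $[u]_{s,p}^p-[u^+]_{s,p}^p-[u^-]_{s,p}^p$ couples $u^+$ and $u^-$, one must carefully check that the two-parameter fiber map $(t^+,t^-)\mapsto J(t^+u^++t^-u^-)$ still has a unique strict maximum at an interior point with a quantitatively nondegenerate Hessian, so that the projection onto $\mathcal{N}_{\rm sc}$ is well defined and continuous near the minimizer; this monotonicity/uniqueness along fibers is exactly what replaces the locality identity $\mathcal{N}_{\rm sc}=\{u:u^\pm\in\mathcal N\}$ available when $s=1$.
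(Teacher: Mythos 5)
Your proposal is correct and follows essentially the same route as the paper: the sign-changing solution is obtained exactly as in the paper's Lemma \ref{le51} and Theorem \ref{subc} (two-parameter fibering, weak l.s.c.\ of $u\mapsto\langle \J'(u),u^\pm\rangle$ to show the weak limit can be projected onto ${\mathcal N}_{\rm sc}$ without raising the reduced energy, then a degree/deformation argument to upgrade the constrained minimizer to a critical point), and you correctly identify the nonlocal coupling term and the nondegeneracy of the fiber map near its maximum as the crux. The only (immaterial) difference is that for the positive solution you minimize directly over ${\mathcal N}_+$, whereas the paper runs the mountain pass with the $(PS)_{c_1}$ condition of Theorem \ref{comth} and then identifies $c_1=\inf_{{\mathcal N}_+}\J$; in the subcritical regime both arguments work and yield the same ground state.
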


\begin{theorem}[Critical Hardy-Sobolev - positive solution]
	\label{theorem2}
Suppose that $0\leq \alpha< ps<N$, $r\in [p, p^*[$, $q=p^*_\alpha$, $\lambda, \mu>0$ and $0<\lambda<\lambda_1$ whenever $r=p$. Then \eqref{frac1} has a positive solution of minimal energy if 
\[
\begin{cases}
r\geq p&\text{if $N\geq p^2s$},\\
r>p^*-p'&\text{if $N<p^2s$}.
\end{cases}
\]
Here $p'=\frac{p}{p-1}$ denotes the dual exponent of $p$ for any $p>1$.
 \end{theorem}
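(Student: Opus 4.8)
The plan is to obtain the positive solution as a minimizer of $J$ over the positive Nehari set $\mathcal{N}_+$, via a constrained minimization that, by Lagrange multiplier / standard arguments, yields a weak solution once we verify that the ground energy $c_+ := \inf_{\mathcal{N}_+} J$ is strictly below the compactness threshold. Since $r < p^*$ is subcritical, the only loss of compactness in a minimizing sequence comes from the critical Hardy--Sobolev term $\mu\int_\Omega |u|^{p^*_\alpha}/|x|^\alpha\,dx$; by the nonlocal concentration--compactness principle of \cite{MS}, a minimizing sequence either converges strongly (after a subsequence) or loses mass through concentration of the singular measure at the origin, the latter scenario contributing at least the \emph{bubble energy}
\[
c_\infty := \Big(\frac1p - \frac1{p^*_\alpha}\Big)\, S_\alpha^{\frac{N-\alpha}{ps-\alpha}},
\]
where $S_\alpha := S_\alpha(N,p,s)$ is the best constant in \eqref{HaSo} (so $S_\alpha = C(N,p,\alpha)^{-p}$). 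Thus the crux is the strict inequality $c_+ < c_\infty$.

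To establish $c_+ < c_\infty$ I would use the truncated Aubin--Talenti / extremal functions as test functions. Let $U$ be a positive extremal (or near-extremal) for \eqref{HaSo}; using the decay estimates for Hardy--Sobolev minimizers proved in \cite{mm} (the analogue of the Sobolev-case estimates of \cite{brasco} exploited in \cite{mpsy}), one truncates $U$ via composition with a smooth cutoff to produce, for small $\eps>0$, functions $u_\eps \in W^{s,p}_0(\Omega)$ concentrating at the origin. One then computes the standard asymptotic expansions as $\eps\downarrow 0$:
\[
[u_\eps]_{s,p}^p = S_\alpha\,\Big(\int_\Omega \frac{|u_\eps|^{p^*_\alpha}}{|x|^\alpha}\,dx\Big)^{p/p^*_\alpha} + O(\eps^{\frac{N-ps}{p-1}}), \qquad \int_\Omega |u_\eps|^r\,dx \geq c\,\eps^{N - \frac{r(N-ps)}{p}} + \text{lower order},
\]
evaluates $\max_{t\geq 0} J(t\,u_\eps)$ (which dominates $c_+$ because $t u_\eps$ meets $\mathcal{N}_+$ for a suitable $t=t_\eps$), and checks that the negative contribution of the $\lambda$-term strictly beats the error term in the numerator expansion. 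This is precisely where the dimensional restriction enters: the error from truncation is of order $\eps^{(N-ps)/(p-1)}$, while the gain from the subcritical term is of order $\eps^{N - r(N-ps)/p}$; requiring the gain to dominate forces $N - \tfrac{r(N-ps)}{p} < \tfrac{N-ps}{p-1}$, which after rearranging is exactly $r > p^* - p'$ when $N < p^2 s$, and is automatically satisfied for every admissible $r\geq p$ when $N \geq p^2 s$ (in which case one can even take $r=p$, using $\lambda < \lambda_1$ to keep coercivity). When $r=p$ one additionally uses $\lambda<\lambda_1$ so that the quadratic-type term can be absorbed and the energy still has the mountain-pass geometry with positive ground level.

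Once $c_+ < c_\infty$ is in hand, I would run the following steps in order. \textbf{(1)} Verify the mountain-pass / Nehari geometry: $J$ is coercive on $\mathcal{N}_+$ under the stated hypotheses (here the case distinctions on $\lambda<\lambda_1$ when $r=p$, and the fact that $q=p^*_\alpha>p$ so $\mu$ need not be restricted), and $\inf_{\mathcal{N}_+}J > 0$. \textbf{(2)} Take a minimizing sequence $\{u_n\}\subset\mathcal{N}_+$; by coercivity it is bounded, so up to a subsequence $u_n \weakto u$ in $W^{s,p}_0(\Omega)$, $u_n \to u$ a.e., and strongly in $L^r(\Omega)$ by the compact (subcritical) embedding. \textbf{(3)} Apply the concentration--compactness principle \cite{MS} to $\{u_n\}$ to decompose the weak limits of $|u_n|^{p^*_\alpha}/|x|^\alpha$ and of the Gagliardo energy density; the strict inequality $c_+<c_\infty$ rules out any concentration at $0$ and at other points, forcing $\int_\Omega |u_n|^{p^*_\alpha}/|x|^\alpha\,dx \to \int_\Omega |u|^{p^*_\alpha}/|x|^\alpha\,dx$, hence $u_n\to u$ strongly. \textbf{(4)} Conclude $u\in\mathcal{N}_+$, $J(u)=c_+$, deduce $u\geq 0$, $u\not\equiv 0$, and that $u$ is a weak solution of \eqref{frac1} via the Nehari Lagrange-multiplier argument; a strong maximum principle for $(-\Delta_p)^s$ then gives $u>0$ in $\Omega$. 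The main obstacle is unquestionably Step (3) combined with the delicate test-function estimate behind $c_+<c_\infty$: one must control the nonlocal truncation error sharply enough to expose the dimensional threshold $N \gtrless p^2 s$, and then invoke the nonlocal concentration--compactness dichotomy with enough precision to exclude \emph{all} loss of compactness, not merely at the origin.
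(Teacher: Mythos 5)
Your proposal is correct in substance, and its core ingredient --- the truncated Hardy--Sobolev extremals of \cite{mm}, the competition between the truncation error $O(\eps^{\frac{N-ps}{p-1}})$ and the gain $\eps^{N-\frac{N-ps}{p}r}$ from the subcritical term, and the resulting threshold $r>p^*-p'$ discriminated by the sign of $N-p^2s$ --- is exactly the paper's lemma placing $c_1$ in the Palais--Smale range. The surrounding variational machinery is, however, different. The paper runs a mountain pass argument: it verifies the $(PS)_c$ condition for $c$ below the bubble energy by a classical Br\'ezis--Lieb splitting (Theorem \ref{comth}; no concentration--compactness is needed for the positive solution), obtains a genuine critical point $w$ at the mountain pass level $c_1$, and only afterwards identifies $c_1=\inf_{\mathcal N}J=\inf_{{\mathcal N}_+}J$ and shows $w$ has constant sign (if $w^\pm\neq 0$, projecting $w^+$ onto ${\mathcal N}$ strictly lowers the energy). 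You instead minimize directly over ${\mathcal N}_+$ and recover compactness via the concentration--compactness principle of \cite{MS}; this works and is close to what the paper actually does in the sign-changing case, but it is heavier than necessary here. Three points in your write-up need care. First, ${\mathcal N}_+$ carries the extra convex constraint $u\ge 0$, so a minimizer over ${\mathcal N}_+$ is not automatically a free critical point of $J$ by a naive Lagrange multiplier argument; you should either minimize over all of ${\mathcal N}$ and then show the minimizer can be taken of one sign (using Lemma \ref{25} to see that the Nehari projection of $u^+$ does not increase the energy), or argue through the mountain pass characterization as the paper does. Second, your bubble energy $c_\infty$ is missing the factor $\mu^{-p/(p^*_\alpha-p)}$: since the concentrating profile must itself satisfy the Nehari constraint with coefficient $\mu$, the correct threshold is $\bigl(\tfrac1p-\tfrac1{p^*_\alpha}\bigr)S_\alpha^{\frac{N-\alpha}{ps-\alpha}}\mu^{-\frac{p}{p^*_\alpha-p}}$ as in \eqref{c1ps}; this is harmless for the structure of the argument but matters for the constants. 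Third, at the borderline case $N=p^2s$, $r=p=p^*/p'$ the power comparison is an equality and one needs the logarithmic improvement in \eqref{estqal} to close the argument, a detail your sketch glosses over.
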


\begin{theorem}[Critical Hardy-Sobolev - sign-changing solution]
	\label{sctheo}
Suppose that $0\leq \alpha< ps<N$, $r\in [p, p^*[$, $q=p^*_\alpha$, $\lambda, \mu>0$ and $0<\lambda<\lambda_1$ whenever $r=p$. Then \eqref{frac1} has a sign-changing solution of minimal energy if $r>p^*-1$.
\end{theorem}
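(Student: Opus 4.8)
The plan is to obtain the sign-changing solution via a quasi-critical approximation, exactly as outlined in the introduction. First I would fix a decreasing sequence $\eps\downarrow 0$ and, by Theorem~\ref{bothsub} applied with the subcritical exponent $q_\eps:=p^*_\alpha-\eps$ in place of $q$ (note that the hypotheses on $\lambda,\mu$ and $r$ are met, since $r>p^*-1\geq p$ forces $\min\{q_\eps,r\}>p$ for $\eps$ small), produce sign-changing ground states $u_\eps\in{\mathcal N}_{\rm sc}^\eps$ of the approximating problem, with $J_\eps(u_\eps)=:c_{\rm sc}^\eps=\inf_{{\mathcal N}^\eps_{\rm sc}}J_\eps$. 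The standard Nehari/coercivity bookkeeping gives that $\{u_\eps\}$ is bounded in $W^{s,p}_0(\Omega)$, so up to a subsequence $u_\eps\weakto u$ in $W^{s,p}_0(\Omega)$, $u_\eps\to u$ in $L^r(\Omega)$ and a.e., and $u_\eps^\pm\weakto u^\pm$. Passing to the limit in the equation (the subcritical term converges strongly, the Hardy term is handled by the nonlocal Brezis--Lieb / Concentration--Compactness machinery from \cite{MS}) shows $u$ is a weak solution of \eqref{frac1} in the critical case, and a lower-semicontinuity argument on the energies gives $J(u)\leq\liminf_\eps c_{\rm sc}^\eps$.

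The heart of the matter is then twofold. The first, and I expect the main, obstacle is the \emph{energy control}: one must show that $\limsup_{\eps\to 0}c_{\rm sc}^\eps< c_+ + \frac1N S^{N/(ps-\alpha)}$ (schematically, the positive ground energy $c_+$ plus the energy $E_\infty$ of one Hardy--Sobolev bubble), which is precisely the content of Lemma~\ref{energy-control} invoked in the introduction. This requires building a competitor in ${\mathcal N}_{\rm sc}^\eps$ out of the positive ground state plus a truncated, rescaled, sign-flipped Aubin--Talenti bubble concentrated at $0$, and carefully expanding $J_\eps$ on it; the nonlocal cross-interaction term $[u^++u^-]_{s,p}^p-[u^+]_{s,p}^p-[u^-]_{s,p}^p$ must be shown to be a lower-order correction, and here the decay estimates from \cite{mm,brasco} and the truncation-by-composition technique of \cite{mpsy} enter. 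The condition $r>p^*-1$ is exactly what is needed for the subcritical-term contribution in this expansion to dominate the cross terms and the error from truncation, so that the strict inequality survives.

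The second step is to rule out that the weak limit $u$ fails to be sign-changing. Suppose, for contradiction, that $u^-\equiv 0$ (the case $u^+\equiv 0$ is symmetric). Then $u^+=u$ is a nonnegative solution, so $J(u^+)\geq c_+$ by definition of the positive ground energy, while the negative parts $u_\eps^-$ converge weakly to $0$ but stay bounded away from $0$ in the critical norm (since $u_\eps^-\in{\mathcal N}^\eps$ and $J_\eps$ is coercive on the Nehari set, $[u_\eps^-]_{s,p}\not\to 0$). By the nonlocal Concentration--Compactness principle of \cite{MS}, applied to the sequence $u_\eps^-$, the loss of compactness is carried by a Hardy--Sobolev bubble concentrating at $0$, whence $\liminf_\eps J_\eps(u_\eps^-)\geq E_\infty$. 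Splitting $c_{\rm sc}^\eps=J_\eps(u_\eps)=J_\eps(u_\eps^+)+J_\eps(u_\eps^-)+o(1)$ (the cross-term being controlled as above, and each part being a.e.\ a Nehari-type critical point contribution) and passing to the limit yields $\liminf_\eps c_{\rm sc}^\eps\geq c_+ + E_\infty$, contradicting the energy control. Hence $u^\pm\neq 0$, and since $u$ is a solution it lies in ${\mathcal N}_{\rm sc}$, so $J(u)\geq\inf_{{\mathcal N}_{\rm sc}}J=:c_{\rm sc}$; combined with $J(u)\leq\liminf_\eps c_{\rm sc}^\eps\leq c_{\rm sc}$ (the last inequality from comparing approximating and limiting Nehari problems), $u$ is a sign-changing ground state.

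Two technical points deserve care in the write-up. One must justify that $c_{\rm sc}^\eps$ is indeed attained for each small $\eps>0$ — this is where Theorem~\ref{bothsub} and Lemma~\ref{le51} (nonemptiness of ${\mathcal N}_{\rm sc}$) are used, noting the strict inequalities $\min\{q_\eps,r\}>p$ needed there hold once $\eps<p^*_\alpha-p$. And one must verify that the Concentration--Compactness dichotomy for $u_\eps^-$ cannot produce ``vanishing'' or splitting into several bubbles in a way that lowers the energy below $E_\infty$: this follows from the quantization of the Hardy--Sobolev deficit (the best constant $S$ is the smallest possible bubble energy) together with the fact that, since $0\in\Omega$ is the only point where compactness of the embedding into $L^{p^*_\alpha}(\Omega,dx/|x|^\alpha)$ fails, all concentration occurs at the origin. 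The rest is routine.
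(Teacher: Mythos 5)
Your overall strategy (quasi-critical approximation, the energy estimate $c^\eps_{\rm sc}<c_++E_\infty-\kappa$, Concentration--Compactness to force a bubble on the part that vanishes weakly, comparison of Nehari levels for the minimality) is the one the paper follows. However, the energy accounting in your contradiction step contains a genuine gap. You split $c^\eps_{\rm sc}=J_\eps(u_\eps)=J_\eps(u_\eps^+)+J_\eps(u_\eps^-)+o(1)$ and then bound $J_\eps(u_\eps^+)\gtrsim c_+$ and $J_\eps(u_\eps^-)\gtrsim E_\infty$ separately. Neither half of this is justified. First, the nonlocal cross-term $[u_\eps]_{s,p}^p-[u_\eps^+]_{s,p}^p-[u_\eps^-]_{s,p}^p$ is strictly positive and there is no reason it should be $o(1)$ for the actual minimizers (it is small only for the artificially constructed competitor with separated supports used in Lemma~\ref{energy-control}). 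Second, and more seriously, $u_\eps^\pm$ do \emph{not} belong to ${\mathcal N}^\eps$ --- this is exactly the nonlocal obstruction of Remark~\ref{remneh}; one only has $\langle J_\eps'(u_\eps^\pm),u_\eps^\pm\rangle\leq\langle J_\eps'(u_\eps),u_\eps^\pm\rangle=0$ from \eqref{b}. Consequently $J_\eps(u_\eps^+)$ cannot be bounded below by the ground level $c_{1,\eps}$ (the fibering map only gives $J_\eps(u_\eps^+)\leq\sup_t J_\eps(tu_\eps^+)$, the wrong direction), and likewise $J_\eps(u_\eps^-)\geq E_\infty+o(1)$ does not follow. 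The same confusion appears when you assert $[u_\eps^-]_{s,p}\not\to0$ ``since $u_\eps^-\in{\mathcal N}^\eps$''; the correct justification is the inequality above together with Lemma~\ref{lemmalb}.

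The repair is to never decompose the Gagliardo seminorm at all: since $\langle J_\eps'(u_\eps),u_\eps\rangle=0$, one writes
\begin{equation*}
J_\eps(u_\eps)=\Big(\tfrac{\lambda}{p}-\tfrac{\lambda}{r}\Big)\int_\Omega|u_\eps|^r\,dx+\Big(\tfrac{\mu}{p}-\tfrac{\mu}{q_\eps}\Big)\int_\Omega\frac{|u_\eps|^{q_\eps}}{|x|^\alpha}\,dx,
\end{equation*}
a sum of nonnegative zeroth-order integrals that splits \emph{exactly} over $u_\eps^+$ and $u_\eps^-$ with no cross-term. One then applies the variable-exponent Concentration--Compactness lemma and the mass quantization $\nu_j^\pm\geq(S_\alpha/\mu)^{(N-\alpha)/(ps-\alpha)}$ (Lemmas~\ref{cc} and \ref{lfin}) to the part converging weakly to zero, which contributes at least $E_\infty$, while the other part contributes at least $J_0(u)\geq c_{1,0}$ by \eqref{neh} and Fatou --- \emph{provided} $u\neq0$. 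You must also treat the sub-case $u\equiv0$, which your write-up omits: there both parts concentrate, the limit energy is at least $2E_\infty$, and one concludes via \eqref{c1ps} (i.e.\ $c_{1,0}<E_\infty$). With these corrections your argument coincides with the paper's proof.
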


\begin{theorem}[Pure Hardy critical]
\label{sctheoH}
Let $\alpha=ps<N$, $r\in \ ]p, p^*[$, $q=p^*_\alpha=p$, $\lambda, \mu>0$ and $\mu<\lambda_{1,ps}$. Then \eqref{frac1} has both a positive and a sign-changing solution of minimal energy.
\end{theorem}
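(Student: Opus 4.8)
\emph{Proof plan.}
Set $\|u\|^p:=[u]_{s,p}^p-\mu\int_\Omega|u|^p\,|x|^{-ps}\,dx$. Since $\mu<\lambda_{1,ps}$, the fractional Hardy inequality \eqref{lambdaalpha} gives $\|u\|^p\ge(1-\mu/\lambda_{1,ps})\,[u]_{s,p}^p$, so $\|\cdot\|$ is a norm equivalent to $[\cdot]_{s,p}$ on $W^{s,p}_0(\Omega)$, and together with $r>p$ it makes $J$ coercive. On $\mathcal N$ one has $\lambda\int_\Omega|u|^r\,dx=\|u\|^p$, hence $J(u)=(\tfrac1p-\tfrac1r)\|u\|^p$, and from $\|u\|^p=\lambda\int_\Omega|u|^r\,dx\le\lambda C\|u\|^r$ with $r>p$ one gets $\|u\|\ge c>0$ on $\mathcal N$, so $\inf_{\mathcal N}J>0$ and likewise on $\mathcal N_+$ and $\mathcal N_{\rm sc}$. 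For the \emph{positive solution} I would minimize $J$ over $\mathcal N_+$ exactly as for the subcritical ground state in Theorem~\ref{bothsub}: the scalar fibering $t\mapsto J(tu)$, the $C^1$ natural-constraint structure of $\mathcal N_+$ (using $r>p$), the nonvanishing of the weak limit of a minimizing sequence (using $\inf_{\mathcal N_+}J>0$ and the compact embedding $W^{s,p}_0(\Omega)\hookrightarrow L^r(\Omega)$, $r<p^*$), the Lagrange rule, and the strict positivity $u>0$ in $\Omega$ by the strong minimum principle for $(-\Delta_p)^s$ are all as there. The one point that is \emph{not} automatic --- it was, in the subcritical case, because the Hardy term was compact --- is the sequential weak lower semicontinuity of $J$ (and of $u\mapsto\|u\|^p$), which I turn to now.

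\emph{Weak lower semicontinuity of $J$.} Although for $\alpha=ps$ and $q=p^*_\alpha=p$ the Hardy term is critical, noncompact, and appears in $J$ with a negative sign, $J$ remains sequentially weakly lower semicontinuous on $W^{s,p}_0(\Omega)$ provided $\mu<\lambda_{1,ps}$; I would prove this via the nonlocal concentration--compactness principle of \cite{MS}, following \cite{Montefusco}. If $u_n\weakto u$ then, up to a subsequence, $|u_n|^p\,|x|^{-ps}\,dx\weakto|u|^p\,|x|^{-ps}\,dx+\nu_0\,\delta_0$ and, by that principle, $\liminf_n[u_n]_{s,p}^p\ge[u]_{s,p}^p+\mu_0$ with $\mu_0\ge\lambda_{1,ps}\,\nu_0$ --- the origin being the only concentration point (elsewhere the weight is locally bounded and $W^{s,p}_0(\Omega)\hookrightarrow L^p$ is compact, and no mass escapes to infinity as $\Omega$ is bounded), and where one uses that, by scale invariance and $0\in\Omega$, $\lambda_{1,ps}$ is exactly the sharp constant in the fractional Hardy inequality on $\R^N$. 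Since also $\int_\Omega|u_n|^r\,dx\to\int_\Omega|u|^r\,dx$,
\[
\liminf_{n\to\infty} J(u_n)\ \ge\ J(u)+\tfrac1p\big(\mu_0-\mu\,\nu_0\big)\ \ge\ J(u)+\tfrac1p\,(\lambda_{1,ps}-\mu)\,\nu_0\ \ge\ J(u);
\]
the same estimate gives $\liminf_n\|u_n\|^p\ge\|u\|^p$.

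\emph{Sign-changing solution.} Here $\mathcal N_{\rm sc}\ne\emptyset$ by Lemma~\ref{le51}. As in Theorem~\ref{bothsub} I would first prove the fibering lemma: with $\phi_v(a,b):=J(av^+-bv^-)$, for every $v$ with $v^\pm\ne0$ there is a unique $(a_v,b_v)\in(0,\infty)^2$ such that $a_vv^+-b_vv^-\in\mathcal N_{\rm sc}$, and it realizes $\max_{a,b>0}\phi_v(a,b)$; the maximum is interior because the strictly positive nonlocal interaction between $v^+$ and $v^-$ makes $\partial_b\phi_v>0$ on $\{b=0,\,a>0\}$ (and symmetrically), while coercivity ($r>p$, the Hardy term absorbed into $\|\cdot\|$ thanks to $\mu<\lambda_{1,ps}$) forces $\phi_v(a,b)\to-\infty$ as $a^2+b^2\to\infty$. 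Now take a minimizing sequence $u_n\in\mathcal N_{\rm sc}$ for $c_{\rm sc}:=\inf_{\mathcal N_{\rm sc}}J$; it is bounded, and $u_n\weakto u$, $u_n^\pm\weakto u^\pm$ (the latter from a.e.\ convergence and $[u_n^\pm]_{s,p}\le[u_n]_{s,p}$). Testing $\langle J'(u_n),u_n^\pm\rangle=0$ with the elementary monotonicity inequalities for $(-\Delta_p)^s$ gives $[u_n^\pm]_{s,p}^p\le\lambda\int_\Omega(u_n^\pm)^r\,dx+\mu\int_\Omega(u_n^\pm)^p\,|x|^{-ps}\,dx$; inserting $\mu\int_\Omega(u_n^\pm)^p\,|x|^{-ps}\,dx\le(\mu/\lambda_{1,ps})\,[u_n^\pm]_{s,p}^p$ and using $r>p$ yields $[u_n^\pm]_{s,p}\ge c>0$, hence $\int_\Omega(u_n^\pm)^r\,dx\ge c'>0$ and, after passing to the $L^r$-limit, $u^\pm\ne0$. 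Set $\tilde u:=a_uu^+-b_uu^-\in\mathcal N_{\rm sc}$. Since $u_n\in\mathcal N_{\rm sc}$ forces $J(u_n)=\max_{a,b>0}\phi_{u_n}(a,b)\ge\phi_{u_n}(a_u,b_u)$, and $a_uu_n^+-b_uu_n^-\weakto\tilde u$ so that the weak lower semicontinuity applies,
\[
c_{\rm sc}\ \le\ J(\tilde u)\ \le\ \liminf_{n\to\infty}\phi_{u_n}(a_u,b_u)\ \le\ \liminf_{n\to\infty}J(u_n)\ =\ c_{\rm sc},
\]
whence $J(\tilde u)=c_{\rm sc}$. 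Since $\mathcal N_{\rm sc}$ is not a manifold, $J'(\tilde u)=0$ follows from the standard deformation argument on $\mathcal N_{\rm sc}$ (using continuity of $v\mapsto(a_v,b_v)$ near $\tilde u$), exactly as in Theorem~\ref{bothsub}; thus $\tilde u$ is the sign-changing solution of minimal energy.

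\emph{The main obstacle} is precisely the weak lower semicontinuity of $J$ of the second paragraph: with the Hardy term critical and noncompact, $J$ is a priori not weakly lower semicontinuous, and the whole construction rests on the fact that the loss of compactness is confined to the single point $0$ and, by the sharp Hardy inequality, is strictly dominated by the leading term exactly when $\mu<\lambda_{1,ps}$. This is why, unlike in Theorems~\ref{theorem2}--\ref{sctheo}, no quasi-critical approximation and no bubble-energy estimate are needed; everything else runs parallel to the arguments already developed in the paper.
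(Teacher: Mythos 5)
Your proposal is correct and follows essentially the same route as the paper: direct minimization over the Nehari sets, with the noncompact critical Hardy term handled by the concentration--compactness principle of \cite{MS} in the spirit of \cite{Montefusco}, the concentration at the origin being absorbed precisely because $\mu<\lambda_{1,ps}=S_{ps}$. The only (harmless) variations are that the paper proves sequential weak lower semicontinuity of $u\mapsto\langle \J'(u),u^\pm\rangle$ rather than of $\J$ itself, shows that the weak limit of the minimizing sequence already belongs to ${\mathcal N}_{\rm sc}$ and converges strongly, whereas you project the weak limit back onto ${\mathcal N}_{\rm sc}$ via the fibering map; and the paper obtains the positive solution through the mountain pass together with the $(PS)_c$ condition of Theorem \ref{comth}(3) rather than by constrained minimization over ${\mathcal N}_+$.
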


{\em -- Discussion}
\begin{itemize}
\item
The assumption $\max\{r, q\}>p$ in the previous Theorems is made in order to avoid a fully homogeneous problem. Solutions of \eqref{frac1} with $r=q=p$ fall in the framework of weighted eigenvalue problems. As long as $\alpha<ps$ (subcritical case) these can be treated as in \cite{SW}. The only difficult case is the critical one  $\alpha=ps$, $p=q=r$, since the Hardy term fails to be compact. Even in the classical case $s=1$, this problem is quite delicate and has been treated in \cite{Tertikas}, \cite{Smets} through a weighted Concentration-Compactness alternative. 

\item
When $r=p$ and $q=p^*_\alpha>p$ (as in Theorems \ref{theorem2} and \ref{sctheo}), we are in the so called {\em Brezis-Nirenberg problems} framework mentioned before, where the critical nonlinearity and the spectrum of the principal part interact. Regarding positive solutions, observe that $N<p^2s$ implies that $p^*-p'>p$, so that we obtain positive solutions only for $0<\lambda<\lambda_1$ and $N\geq p^2s$. 
\item
When $r=p$ and  $\lambda\geq \lambda_1$, \eqref{frac1} has no constant-sign solutions by a well known argument (see e.g. \cite[Corollary A4]{LLM}) which we recall here. A Picone identity in the fractional setting has been obtained in \cite{bf} and reads as 
\[
[v]_{s,p}^p\geq \langle (-\Delta_p)^s u,\frac{v^p}{(u+\eps)^{p-1}}\rangle,\qquad \forall \eps>0, \quad u,v\geq 0.
\]
If $u$ is a non-negative, non-trivial solution to \eqref{frac1} and $v$ is the first eigenfunction for $(-\Delta_p)^s$, then
\[
\lambda_{1}\|v\|_{L^p}^p=[v]_{s,p}^p\geq \lambda\int_{\Omega}u^{p-1}\frac{v^p}{(u+\eps)^{p-1}}\, dx+\mu\int_{\Omega}\frac{u^{q-1}}{|x|^\alpha}\frac{v^p}{(u+\eps)^{p-1}}\, dx.
\]
Due to $\mu>0$, the second term on the right is uniformly (in $\eps>0$) bounded from below by a positive constant, and by dominated convergence on the first one we get $\lambda_{1}\|v\|_{L^p}^p>\lambda\|v\|_{L^p}^p$, forcing $\lambda<\lambda_1$.
 \item
Regarding the sing-changing result Theorem \ref{sctheo} in the case $r=p$, we obtain nodal solutions when $N>p^2s+ps$ and $\lambda<\lambda_1$. For $\lambda\geq \lambda_1$, the previous argument shows that actually {\em any} solution must be sign-changing, so that in a certain sense the problem is simpler. Solutions in this non-coercive case are obtained through linking arguments over cones, as exploited in \cite{mpsy} in the case $\alpha=0$ and in \cite{Yang} when $\alpha>0$, yielding sign-changing solutions for any $N>p^2s$ if $\lambda>\lambda_1$ does not belong to a suitable cohomological spectrum for $(-\Delta_p)^s$. The case when $\lambda$ belongs to the spectrum of $(-\Delta_p)^s$ is also discussed in these works, yielding a sign-changing solution whenever $\alpha=0$ and, e.g., $N\geq p^2s+ps$ (actually for even lower values of $N$).  
\item
We did not consider the doubly critical problem $r=p^*$, $q=p^*_\alpha$. In this case, the problem is much more difficult even in the classical case $s=1$, since its solvability heavily depends on the topology and geometry of the domain $\Omega$. Entire ground state solutions where found in the semilinear case in \cite{T} but, to the best of our knowledge, there is no explicit charactezation known in the case $p\neq 2$. For star-shaped domains with respect to the origin, a Pohozaev identity rules out existence of solutions when $p=2$ (\cite{T}). For $p\neq 2$, the lack of a unique continuation principle only ensures a similar result for constant-sign solutions, see \cite[Theorem 2.1]{gy}. For domains with nontrivial topology, existence can be granted for sufficiently small $\mu$, see \cite{HS}.
In the non-local, non-linear case the situation is more difficult due to the lack of a Pohozaev identity for $(-\Delta_p)^s$ when $p\neq 2$. The latter is available only in the case $p=2$ due to \cite{ROS}, leading in a standard way to the above mentioned non-existence result. Some existence results for contractible domains in the fractional semilinear case may be obtained following the ideas of \cite{MSS}.
\end{itemize}

{\em -- Comparison with some previous results}

The fractional Brezis-Nirenberg case $r=p$, $\alpha=0$, $q=p^*$ has been considered  in \cite{SV1, SV2} when $p=2$ and in \cite{mpsy} for any $p>1$; the paper \cite{Yang} deals with the quasilinear Brezis-Nirenberg problem with  Hardy-Sobolev nonlinearity, i.e. $r=p$ and $\alpha\in \ ]0, ps[$. In all these works, under various additional assumptions on the parameter $\lambda$ and its relation with a suitable spectrum of $(-\Delta_p)^s$, existence of a nontrivial solution is proved. Sign informations on the latter can in each case be obtained, either through its variational characterization (in the case $\lambda\in \ ]0, \lambda_1[$), or using the Picone identity argument described above when $\lambda\geq \lambda_1$. In particular, said solution turns out to be positive when $\lambda<\lambda_1$ and must be sign-changing when $\lambda\geq \lambda_1$. In this framework, the main novelty of this manuscript consists in finding sign-changing (least energy) solutions when $\lambda\in \ ]0, \lambda_1[$.\\
The local counterpart of \eqref{frac1}, is the quasi-linear problem
\[
-\Delta_p u =\lambda |u|^{r-2}u+\mu\frac{|u|^{q-2}u}{|x|^\alpha}\quad\text{in $\Omega$,}
\]
The Brezis-Nirenberg case $r=p$, $\alpha=0$, $q=p^*$ is well studied, see \cite{DGL} and the references therein. The general case was investigated in \cite{gy}, following the ideas of \cite{Ta}. However, unfortunately, there seems to be a gap in the proof when $p\neq 2$ (see e.g. the proof of Theorem 5.3, third display of p. 5720). Concerning the case $p=2$ and $0<s<1$, we refer the reader e.g.\ to \cite{wang,Yang-Yu}.

\medskip

{\em -- Structure of the paper}

\noindent
$\bullet$ In Section 2.1 we describe the relevant functional analytic setting. It is worth noting that for any $p>1$, the operator $(-\Delta_p)^s$ turns out to be sequentially continuous with respect to the weak topology both in $W^{s,p}_0(\Omega)$ and $W^{-s, p'}(\Omega)$; this is only true in the local case when $p=2$. Section 2.2 contains some basic properties of the Nehari sets described above. Section 2.3  recalls known decay properties for the Aubin-Talenti functions optimizing the Hardy-Sobolev inequality, while Section 2.4 contains technical estimates for suitable truncations of the latters. Section 2.5 collects quite classical results on the compactness of Palais-Smale sequences. 

\noindent
$\bullet$
Section 3.1 is devoted to the proof of Theorems \ref{bothsub} and \ref{theorem2}. In Section 3.2 we prove some uniform regularity estimates and obtain as a consequence the compactness of the positive ground states in the quasicritical approximation.

\noindent
$\bullet$
Section 4 concerns the problem of sign-changing solutions. In Section 4.1 we consider the subcritical case, obtaining the nodal ground states. Section 4.2 contains the core estimate for the sign-changing ground level in the subcritical approximation, which is then applied in Section 4.3 to prove Theorems \ref{sctheo} and \ref{sctheoH} through a Concentration-Compactness principle.

 \section{Preliminary results}
\label{sobhardyrt}
\subsection{Notations and functional spaces}
In the whole paper, we will assume that  $s  \in \ ]0, 1[$, $p>1$ and $0\leq \alpha\leq ps<N$. Given any $q\in \ ]1, +\infty[$ we let $q'=\frac{q}{q-1}$, and given $N, p, s, \alpha$ as before we set
\[
p^*_\alpha=\frac{(N-\alpha)p}{N-ps},\qquad p^*=p^*_0.
\]
Moreover, $\Omega\subseteq \R^N$ will be a open bounded set with  smooth (say, $C^2$) boundary. Given $E\subseteq\R^N$, $|E|$ will denote its Lebesgue measure, and $E^c=\R^N\setminus E$. All functions will be tacitly assumed to be Lebesgue measurable.

We introduce the fractional Sobolev space
\[
W^{s,p}_0(\Omega)=\left\{u\in L^1_{\rm loc}(\R^N): [u]_{s,p}^p:=\int_{\R^{2N}}\frac{|u(x)-u(y)|^p}{|x-y|^{N+ps}}\, dx\, dy<+\infty, \  \text{$u\equiv 0$ a.e. on $\Omega^c$}\right\}.
\]
and the homogeneous fractional Sobolev space
\[
D^{s,p}(\R^N)=\left\{u\in L^{p^*}(\R^N): [u]_{s,p}^p:=\int_{\R^{2N}}\frac{|u(x)-u(y)|^p}{|x-y|^{N+ps}}\, dx\, dy<+\infty\right\}\supset W^{s,p}_0(\Omega).
\]
For $p>1$, $W^{s,p}_0(\Omega)$ and $D^{s,p}(\R^N)$ are separable reflexive Banach space w.r.t. the norm $[\, \cdot\, ]_{s,p}$ and both can also be seen as the completion with respect to the norm $[\ ]_{s, p}$ of $C^\infty_c(\R^N)$ (see e.g. \cite[Theorem 2.1]{brasco}). The topological dual of $W^{s,p}_0(\Omega)$ will be denoted by $W^{-s, p'}(\Omega)$, with corresponding duality pairing $\langle \, \cdot\, , \, \cdot\, \rangle:W^{-s, p'}(\Omega)\times W^{s,p}_0(\Omega)\to \R$. Due to reflexivity, the weak and weak$^*$ convergence in $W^{-s, p'}(\Omega)$ coincide.
\medskip
\noindent
We recall here the fractional Hardy-Sobolev inequality.
\begin{lemma}[Hardy-Sobolev inequality]
	\label{sobhardy}
Assume that $0\leq \alpha\leq ps<N$. Then
there exists a positive constant $C$ such that
\begin{align*}
 \Big(\int_\Omega\frac{|u|^{p^*_\alpha}}{|x|^{\alpha}}dx\Big)^{1/p^*_\alpha}
\leq C \Big(\int_{\R^{2N}}\frac{|u(x)-u(y)|^p}{|x-y|^{N+ps}}\, dx\, dy\Big)^{1/p},
\qquad\text{for every $u\in W^{s,p}_0(\Omega)$.}
\end{align*}
\end{lemma}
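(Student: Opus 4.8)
The plan is to derive the Hardy--Sobolev inequality on $\Omega$ from the global inequality \eqref{HaSo} on $\R^N$, which is stated in the introduction as the fundamental fact underlying the whole construction. Since every $u\in W^{s,p}_0(\Omega)$ is extended by zero outside $\Omega$ and thus belongs to $D^{s,p}(\R^N)$ with the same Gagliardo seminorm $[u]_{s,p}$, one has
\[
\int_\Omega\frac{|u|^{p^*_\alpha}}{|x|^\alpha}\,dx=\int_{\R^N}\frac{|u|^{p^*_\alpha}}{|x|^\alpha}\,dx\le \big(C(N,p,\alpha)\,[u]_{s,p}\big)^{p^*_\alpha},
\]
and taking the $1/p^*_\alpha$ power gives the claim with $C=C(N,p,\alpha)$. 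So the real content is to establish \eqref{HaSo} itself for $u\in D^{s,p}(\R^N)$ (equivalently, by density, for $u\in C^\infty_c(\R^N)$).

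For the endpoint cases this is classical: when $\alpha=0$ one has $p^*_0=p^*$ and \eqref{HaSo} is just the fractional Sobolev inequality $\|u\|_{L^{p^*}}\le C[u]_{s,p}$; when $\alpha=ps$ one has $p^*_{ps}=p$ and \eqref{HaSo} is the fractional Hardy inequality $\int_{\R^N}|u|^p/|x|^{ps}\,dx\le C[u]_{s,p}^p$, valid since $ps<N$. For intermediate $\alpha\in\ ]0,ps[$ I would interpolate. Write $\theta\in\ ]0,1[$ so that
\[
\frac{1}{p^*_\alpha}=\frac{1-\theta}{p^*}+\frac{\theta}{p},\qquad\text{equivalently}\qquad \alpha=\theta\,ps,
\]
which is checked by a direct computation from $p^*_\alpha=(N-\alpha)p/(N-ps)$. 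Then $|x|^{-\alpha/p^*_\alpha}=\big(1\big)^{(1-\theta)/p^*}\cdot\big(|x|^{-ps/p}\big)^{\theta/p}$ in the sense of splitting exponents, and Hölder's inequality with exponents $\tfrac{1}{1-\theta}$ and $\tfrac1\theta$ applied to $|u|^{p^*_\alpha}|x|^{-\alpha}=|u|^{(1-\theta)p^*_\alpha}\cdot\big(|u|^p|x|^{-ps}\big)^{\theta p^*_\alpha/p}$ (noting $(1-\theta)p^*_\alpha=(1-\theta/(\cdots))\cdots$ works out to the right powers) yields
\[
\int_{\R^N}\frac{|u|^{p^*_\alpha}}{|x|^\alpha}\,dx\le\Big(\int_{\R^N}|u|^{p^*}\,dx\Big)^{(1-\theta)p^*_\alpha/p^*}\Big(\int_{\R^N}\frac{|u|^p}{|x|^{ps}}\,dx\Big)^{\theta p^*_\alpha/p}.
\]
Applying the Sobolev inequality to the first factor and the Hardy inequality to the second, and collecting powers of $[u]_{s,p}$, the exponent becomes $p^*_\alpha\big((1-\theta)+\theta\big)\cdot\frac1p\cdot p=p^*_\alpha$ — I would double-check the bookkeeping here — giving \eqref{HaSo}.

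The main obstacle, and the only nonroutine ingredient, is the fractional Hardy inequality $\int_{\R^N}|u|^p|x|^{-ps}\,dx\le C[u]_{s,p}^p$ for $ps<N$; everything else is Hölder interpolation and density. Rather than reproving Hardy from scratch, I would cite it (it is standard in the fractional $p$-Laplacian literature, e.g.\ it follows from the results used for \eqref{HaSo} in the references already invoked), and similarly cite the fractional Sobolev inequality and the density of $C^\infty_c(\R^N)$ in $D^{s,p}(\R^N)$ recorded earlier via \cite{brasco}. With those three facts in hand the proof is a half-page computation: extend by zero to pass from $\Omega$ to $\R^N$, then interpolate the two endpoint inequalities. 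One should also note the inequality is stated with a single constant $C$ depending only on $N,p,s,\alpha$ and not on $\Omega$, which the argument automatically delivers since the extension-by-zero step is isometric on the seminorm.
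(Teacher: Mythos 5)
Your proof follows the same route as the paper's: extend by zero to $\R^N$, quote the endpoint cases $\alpha=0$ (fractional Sobolev) and $\alpha=ps$ (fractional Hardy) from the literature, and interpolate by H\"older; the final displayed interpolation inequality is correct and is exactly the paper's, raised to the power $p^*_\alpha$. The only issue is the bookkeeping you yourself flagged: the relation $\frac{1}{p^*_\alpha}=\frac{1-\theta}{p^*}+\frac{\theta}{p}$ forces $\theta=\frac{\alpha}{s\,p^*_\alpha}$ (the paper's choice), \emph{not} $\alpha=\theta ps$, and the H\"older exponents in the splitting $|u|^{p^*_\alpha}|x|^{-\alpha}=|u|^{(1-\theta)p^*_\alpha}\cdot\bigl(|u|^p|x|^{-ps}\bigr)^{\theta p^*_\alpha/p}$ must be $\frac{p^*}{(1-\theta)p^*_\alpha}$ and $\frac{p}{\theta p^*_\alpha}$ (which are conjugate precisely because of the first relation) rather than $\frac{1}{1-\theta}$ and $\frac{1}{\theta}$; with these corrections the computation closes.
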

\begin{proof}
It suffices to prove the inequality for $\Omega=\R^N$ and $u\in D^{s,p}(\R^N)$.  The latter is well known in the cases $\alpha=0$ and $\alpha=ps$, see \cite{frank} or \cite{MaSh}. The general case follows by H\"older's inequality since for $\theta=\frac{\alpha}{sp^*_\alpha}\in  [0, 1]$ it holds
\[
\|u|x|^{-\frac{\alpha}{p^*_\alpha}}\|_{L^{p^*_\alpha}(\R^N)}\leq \|u|x|^{-s}\|_{L^p(\R^N)}^{\theta}\|u\|_{L^{p^*}(\R^N)}^{1-\theta},
\]
giving the claimed inequality through the ones for $\alpha=0, ps$. 
\end{proof} 
In particular, $W^{s,p}_0(\Omega)$ embeds continuously into $L^q(\Omega, dx/|x|^\alpha)$ for all $\alpha\in [0, ps]$ and $q\in [1,  p^*_\alpha]$. Moreover, if $q\in [1, p^*_\alpha[$, the embedding is compact (see Lemma \ref{sob-func} below for a proof).  As a further consequence of the previous lemma we can define for any $\alpha\in [0, ps]$ the positive numbers 
\begin{equation}
\label{criticalfrachardyg}
S_\alpha  =\inf\left\{ \int_{\R^{2N}}\frac{|u(x)-u(y)|^p}{|x-y|^{N+ps}}\, dx\, dy:
\text{$u\in  W^{s,p}_0(\Omega)$ with $\int_{\Omega}\frac{|u|^{p^*_\alpha}}{|x|^{\alpha}}dx=1$}\right\}.
\end{equation}

Recalling \eqref{lambdaalpha}, it clearly holds $\lambda_{1,ps}=S_{ps}$.

\medskip
\noindent

The functional $u\mapsto \frac{1}{p}[u]_{s,p}^p$ is convex and $C^1$, so that at any $u\in W^{s,p}_{0}(\Omega)  $ its subdifferential, usually called {\em fractional $p\, s$-Laplacian}, is a uniquely defined element of $W^{-s, p'}(\Omega)$, which we will denote with $(-\Delta_p)^s u$. An explicit computation shows that
\begin{equation}
\label{splap}
\langle (-\Delta_p)^s u, \varphi\rangle=\int_{\R^{2N}}\frac{|u(x)-u(y)|^{p-2}(u(x)-u(y))(\varphi(x)-\varphi(y))}{|x-y|^{N+ps}}dx\, dy,\quad \forall \varphi\in W^{s,p}_0(\Omega).
\end{equation}

\begin{lemma}
\label{wtwcont}
The operator $(-\Delta_p)^s:W^{s,p}_0(\Omega)\to W^{-s, p'}(\Omega)$ is weak-to-weak continuous.
\end{lemma}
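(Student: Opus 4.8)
The goal is to show that if $u_n \weakto u$ in $W^{s,p}_0(\Omega)$, then $(-\Delta_p)^s u_n \weakto (-\Delta_p)^s u$ in $W^{-s,p'}(\Omega)$. Since $W^{s,p}_0(\Omega)$ is reflexive, $\{u_n\}$ is bounded in $W^{s,p}_0(\Omega)$, hence by the explicit formula \eqref{splap} and H\"older's inequality the sequence of functions
\[
U_n(x,y) := \frac{|u_n(x)-u_n(y)|^{p-2}(u_n(x)-u_n(y))}{|x-y|^{(N+ps)/p'}}
\]
is bounded in $L^{p'}(\R^{2N})$; likewise $(-\Delta_p)^s u_n$ is bounded in $W^{-s,p'}(\Omega)$. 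Because $W^{-s,p'}(\Omega)$ is reflexive, it therefore suffices to show that $(-\Delta_p)^s u$ is the \emph{only} weak limit point: every weakly convergent subsequence of $(-\Delta_p)^s u_n$ must converge to $(-\Delta_p)^s u$. So, passing to a subsequence (not relabelled), I may assume $(-\Delta_p)^s u_n \weakto \xi$ in $W^{-s,p'}(\Omega)$ for some $\xi$, and I must identify $\xi = (-\Delta_p)^s u$.

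The plan is to test against $\varphi \in C^\infty_c(\R^N)$ (which is dense in $W^{s,p}_0(\Omega)$) and pass to the limit in
\[
\langle (-\Delta_p)^s u_n, \varphi\rangle = \int_{\R^{2N}} \frac{|u_n(x)-u_n(y)|^{p-2}(u_n(x)-u_n(y))}{|x-y|^{N+ps}}\,(\varphi(x)-\varphi(y))\,dx\,dy.
\]
First I would upgrade the weak convergence $u_n \weakto u$ to pointwise a.e.\ convergence of a subsequence: by the compact embedding $W^{s,p}_0(\Omega) \hookrightarrow L^p(\Omega)$ (the $\alpha = 0$, $q = p < p^*$ case recorded after Lemma~\ref{sobhardy}), $u_n \to u$ strongly in $L^p(\Omega)$, hence $u_n \to u$ a.e.\ in $\R^N$ along a further subsequence. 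Consequently the integrand functions $U_n(x,y)$ converge a.e.\ in $\R^{2N}$ to $U(x,y) := |u(x)-u(y)|^{p-2}(u(x)-u(y))\,|x-y|^{-(N+ps)/p'}$. Combining a.e.\ convergence with boundedness in $L^{p'}(\R^{2N})$ gives $U_n \weakto U$ in $L^{p'}(\R^{2N})$ (this is the standard fact that an a.e.-convergent sequence that is norm-bounded in $L^{p'}$, $p' \in \ ]1,\infty[$, converges weakly to its pointwise limit). Since $\varphi \in C^\infty_c(\R^N)$ makes $(x,y) \mapsto (\varphi(x)-\varphi(y))\,|x-y|^{-(N+ps)/p}$ an $L^p(\R^{2N})$ function, pairing the weak $L^{p'}$ convergence of $U_n$ against this fixed $L^p$ function yields
\[
\langle (-\Delta_p)^s u_n, \varphi\rangle \longrightarrow \langle (-\Delta_p)^s u, \varphi\rangle.
\]
On the other hand $\langle (-\Delta_p)^s u_n, \varphi\rangle \to \langle \xi, \varphi\rangle$ by definition of $\xi$. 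Hence $\langle \xi, \varphi\rangle = \langle (-\Delta_p)^s u, \varphi\rangle$ for all $\varphi \in C^\infty_c(\R^N)$, and by density $\xi = (-\Delta_p)^s u$. Since the limit is subsequence-independent, the whole sequence $(-\Delta_p)^s u_n$ converges weakly to $(-\Delta_p)^s u$.

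The main obstacle — and really the only non-bookkeeping point — is the passage $U_n \weakto U$ in $L^{p'}(\R^{2N})$. One must justify that a.e.\ convergence plus uniform $L^{p'}$-boundedness forces weak $L^{p'}$ convergence to the pointwise limit; this follows because the $L^{p'}$-bound gives a weakly convergent subsequence, and its weak limit can be identified with $U$ via Mazur's lemma (a convex combination converging strongly in $L^{p'}$, hence a.e.\ along a subsequence, must agree with $U$), after which subsequence-independence of the limit $U$ promotes this to the full sequence. Everything else — boundedness from \eqref{splap} and H\"older, extraction of the a.e.-convergent subsequence from the compact embedding, density of $C^\infty_c(\R^N)$, and the reflexivity argument reducing matters to a single weak limit point — is routine. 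A minor subtlety worth a line is that the compact embedding quoted is for $\Omega$ bounded, which is our standing assumption, so $u_n \to u$ in $L^p(\Omega)$ is legitimate.
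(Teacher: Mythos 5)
Your proof is correct and follows essentially the same route as the paper's: reduce to identifying a single weak limit point via reflexivity, extract an a.e.\ convergent subsequence by compact embedding, and use that the $L^{p'}$-bounded, a.e.\ convergent difference quotients converge weakly to their pointwise limit (the paper cites Kavian for this fact where you invoke Mazur's lemma). The only cosmetic difference is that you test against $C^\infty_c$ and conclude by density, whereas the paper tests directly against arbitrary $\varphi\in W^{s,p}_0(\Omega)$.
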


\begin{proof}
Let $u_n\weakto u$ in $W^{s,p}_0(\Omega)$. Since $u\mapsto \frac{1}{p}[u]_{s,p}^p$ is $C^1$, the sequence $(-\Delta_p)^s u_n$ is bounded in $W^{-s, p'}(\Omega)$, and thus weakly sequentially compact by reflexivity. By a standard sub-subsequence argument we can thus assume that $(-\Delta_p)^s u_n\to \Lambda\in W^{-s, p'}(\Omega)$ and are reduced to prove that $\Lambda=(-\Delta_p)^su$. By the compact embedding of $W^{s,p}_0(\Omega)$ in $L^1(\Omega)$, up to subsequence, we have that $u_n\to u$ in $L^1(\R^N)$ and thus $u_n(x)\to u(x)$ for a.e. $x\in \R^N$. In particular, letting
\[
w_n(x, y):=\frac{|u_n(x)-u_n(y)|^{p-2}(u_n(x)-u_n(y))}{|x-y|^{\frac{N+ps}{p'}}}\quad w(x, y):= \frac{|u(x)-u(y)|^{p-2}(u(x)-u(y))}{|x-y|^{\frac{N+ps}{p'}}},
\]
we have that
\[
w_n(x, y)\to w(x, y)\quad \text{for a.e. $(x, y)\in \R^{2N}$}.
\]
Moreover, since  $\|w_n\|_{L^{p'}(\R^{2N})}^{p'}=[u_n]_{s,p}^p\leq C$, we can assume that $w_n\weakto h$ in $L^{p'}(\R^{2N})$. Due to the pointwise convergence above, we thus have (see \cite[Lemme 4.8, Ch.1]{K}) $h=w$, and using the representation \eqref{splap} and the fact that
\[
\frac{\varphi(x)-\varphi(y)}{|x-y|^{\frac{N+ps}{p}}}\in L^p(\R^{2N}),\qquad \forall \varphi\in W^{s,p}_0(\Omega),
\]
we obtain through $(-\Delta_p)^s u_n\weakto \Lambda$ in $W^{-s, p'}(\Omega)$ and $w_n\weakto w$ in $L^{p'}(\R^{2N})$,
\[
\begin{split}
\langle \Lambda, \varphi\rangle &=\lim_n\langle (-\Delta_p)^s u_n, \varphi\rangle=\lim_n\int_{\R^{2N}}w_n(x, y)\frac{\varphi(x)-\varphi(y)}{|x-y|^{\frac{N+ps}{p}}}dx\, dy\\
&=\int_{\R^{2N}}w(x, y)\frac{\varphi(x)-\varphi(y)}{|x-y|^{\frac{N+ps}{p}}}dx\, dy=\langle (-\Delta_p)^s u, \varphi\rangle,
\end{split}
\]
for all $\varphi\in W^{s,p}_0(\Omega)$, proving the claim.
\end{proof}

Notice that this latter property is peculiar of the nonlocal setting, since for any $p\neq 2$ the corresponding local operator (the well known $p$-Laplacian) is {\em not} a weak-to-weak continuous operator.

\subsection{The energy functional}
The energy functional
$\J:W^{s,p}_0(\Omega)\to{\mathbb R}$
formally associated with problem \eqref{frac1} is
\begin{equation*}
\J(u):= \frac{1}{p}\int_{\R^{2N}}\frac{|u(x)-u(y)|^p}{|x-y|^{N+ps}}\, dx\, dy-\frac{\lambda}{r}\int_\Omega |u|^rdx-\frac{\mu}{q}\int_\Omega\frac{|u|^q}{|x|^\alpha}dx.
\end{equation*}
In order to justify that $\J$ is well defined and of class $C^1$ we need
some preliminary results.

\vskip2pt
\noindent

\begin{lemma}[Hardy-type functionals]
	\label{sob-func}
	Let $0\leq \alpha\leq ps<N$.
	Then, for every $p\leq q\leq p^*_\alpha,$
	$$
	H_q:W^{s,p}_0(\Omega)\to{\mathbb R},\qquad  H_q(u):=\int_\Omega\frac{|u|^{q}}{|x|^{\alpha}}dx,
	$$
	is of class $C^1$ with
	\begin{equation}
	\label{deriveta}
	\langle H_q'(u),\varphi\rangle=\int_\Omega\frac{|u|^{q-2}u\, \varphi}{|x|^{\alpha}}\, dx,\qquad \text{for every $u,\varphi \in W^{s,p}_0(\Omega)$.}
		\end{equation}
	 Moreover, for any $p\leq q< p^*_\alpha$,
	$H_q$ is sequentially weakly continuous and for any $p\leq q\leq p^*_\alpha$, the operator $H'_q:W^{s,p}_0(\Omega)\to W^{-s, p'}(\Omega)$ is sequentially weak-to-weak continuous.
\end{lemma}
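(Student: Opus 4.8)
The plan is to prove the three assertions of Lemma~\ref{sob-func} in sequence. \emph{First}, to see that $H_q$ is of class $C^1$ with the claimed derivative, I would show that $H_q$ is G\^ateaux differentiable with the stated formula and that the G\^ateaux derivative $u \mapsto H_q'(u)$ is continuous from $W^{s,p}_0(\Omega)$ into $W^{-s,p'}(\Omega)$; by a standard criterion this yields $C^1$. The G\^ateaux differentiability follows from the mean value theorem applied pointwise to $t \mapsto |u+t\varphi|^q/|x|^\alpha$, dominated convergence, and the integrability of the integrand, which in turn rests on H\"older's inequality combined with the Hardy–Sobolev embedding $W^{s,p}_0(\Omega) \hookrightarrow L^q(\Omega, dx/|x|^\alpha)$ from Lemma~\ref{sobhardy}: one writes $\big||u|^{q-1}\varphi\big|\,|x|^{-\alpha} = \big(|u||x|^{-\alpha/q}\big)^{q-1}\big(|\varphi||x|^{-\alpha/q}\big)$ and applies H\"older with exponents $q'$ and $q$, both of which give finite contributions since $u,\varphi \in L^q(\Omega, dx/|x|^\alpha)$. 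The continuity of $H_q'$ is most cleanly obtained from the stronger weak-to-weak statement proved below (strong convergence implies weak convergence, and a weak-to-weak continuous map restricted to strongly convergent sequences with strongly convergent images — here one in fact gets norm convergence from a standard Vitali/Brezis–Lieb argument — gives continuity), or directly by the generalized dominated convergence theorem.

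\emph{Second}, for the sequential weak continuity of $H_q$ when $q < p^*_\alpha$: if $u_n \weakto u$ in $W^{s,p}_0(\Omega)$, then by the compactness of the embedding $W^{s,p}_0(\Omega) \hookrightarrow L^q(\Omega, dx/|x|^\alpha)$ for $q<p^*_\alpha$ we get $u_n \to u$ strongly in $L^q(\Omega, dx/|x|^\alpha)$, whence $H_q(u_n) \to H_q(u)$ immediately. This is exactly the place the footnote in the introduction points to for the ``if'' direction of the compactness claim, so I should also include a proof that the embedding is indeed compact for $q<p^*_\alpha$: split $\Omega = (\Omega \cap B_\delta) \cup (\Omega \setminus B_\delta)$; on $\Omega \setminus B_\delta$ the weight $|x|^{-\alpha}$ is bounded and one uses the (standard) compact fractional Sobolev embedding into $L^q_{\mathrm{loc}}$ for subcritical $q$, while on $\Omega \cap B_\delta$ one interpolates/uses H\"older to bound the $L^q(dx/|x|^\alpha)$-norm by $(\text{bounded Hardy--Sobolev norm})$ times a factor that is small with $\delta$ because $q$ is strictly below the critical Hardy–Sobolev exponent; letting $\delta \to 0$ gives a precompact family.

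\emph{Third}, for the sequential weak-to-weak continuity of $H_q' : W^{s,p}_0(\Omega) \to W^{-s,p'}(\Omega)$, valid up to and including $q = p^*_\alpha$: I would mimic the argument used for $(-\Delta_p)^s$ in Lemma~\ref{wtwcont}. Given $u_n \weakto u$, the sequence $H_q'(u_n)$ is bounded in $W^{-s,p'}(\Omega)$ (since $\|H_q'(u_n)\|$ is controlled by $\||u_n|^{q-1}|x|^{-\alpha/q'}\|_{L^{q'}(\Omega)}$ up to the embedding constant, and this is bounded along a weakly convergent hence bounded sequence). By reflexivity and a sub-subsequence argument, assume $H_q'(u_n) \weakto \Lambda$; it remains to identify $\Lambda = H_q'(u)$. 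Pass to a further subsequence so that $u_n \to u$ a.e.\ (using compactness into $L^1_{\mathrm{loc}}$), hence $g_n := |u_n|^{q-2}u_n\,|x|^{-\alpha/q'} \to |u|^{q-2}u\,|x|^{-\alpha/q'} =: g$ a.e.; since $\|g_n\|_{L^{q'}(\Omega)}^{q'} = H_q(u_n)$ is bounded, $g_n \weakto g$ in $L^{q'}(\Omega)$ by the pointwise-convergence lemma (\cite[Lemme 4.8, Ch.1]{K}) cited earlier. Testing $\langle H_q'(u_n),\varphi\rangle = \int_\Omega g_n\, \varphi\,|x|^{-\alpha/q}\,dx$ against fixed $\varphi \in W^{s,p}_0(\Omega)$ — noting $\varphi\,|x|^{-\alpha/q} \in L^q(\Omega)$ by the Hardy–Sobolev embedding — and passing to the limit gives $\langle \Lambda,\varphi\rangle = \int_\Omega g\,\varphi\,|x|^{-\alpha/q}\,dx = \langle H_q'(u),\varphi\rangle$, as desired.

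The main obstacle is the critical case $q = p^*_\alpha$ in the third part: there the compactness of the embedding fails, so one cannot get strong $L^{p^*_\alpha}$-convergence, and the whole argument must go through the weak-$L^{q'}$ convergence of the nonlinear terms $g_n$ identified via a.e.\ convergence plus uniform $L^{q'}$ bounds — this is precisely where the nonlocal structure helps, since (unlike for the local $p$-Laplacian) we only need to recognize a weak limit of an integral pairing, not any compensated-compactness or $S^+$-type property. A secondary technical point is being careful that the weight $|x|^{-\alpha}$ near the origin does not spoil the H\"older splittings; this is handled uniformly by always distributing the weight as $|x|^{-\alpha} = |x|^{-\alpha/q}\cdot|x|^{-\alpha/q'}$ and invoking Lemma~\ref{sobhardy}.
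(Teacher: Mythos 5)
Your proposal is correct and, for the two substantive parts ($C^1$ regularity and the weak-to-weak continuity of $H_q'$ up to the critical exponent), follows essentially the same route as the paper: the derivative formula is obtained by routine dominated convergence after distributing the weight as $|x|^{-\alpha}=|x|^{-\alpha/q}\cdot|x|^{-\alpha/q'}$, and the critical-exponent weak-to-weak continuity is proved exactly as in Lemma \ref{wtwcont}, via boundedness of $\{|u_n|^{q-2}u_n\}$ in $L^{q'}(\Omega,dx/|x|^\alpha)$, a.e.\ convergence, and identification of the weak $L^{q'}$ limit. The one place you genuinely diverge is the sequential weak continuity of $H_q$ for $q<p^*_\alpha$: you prove compactness of the embedding $W^{s,p}_0(\Omega)\hookrightarrow L^q(\Omega,dx/|x|^\alpha)$ by a spatial splitting into $B_\delta$ and its complement, using the smallness of $\bigl(\int_{B_\delta}|x|^{-\alpha}dx\bigr)^{1-q/p^*_\alpha}$ near the origin and the unweighted compact embedding away from it; the paper instead interpolates in the exponent, writing $|u_n-u|^q|x|^{-\alpha}=\bigl(|u_n-u|^p|x|^{-ps}\bigr)^{\alpha/ps}|u_n-u|^{q-\alpha/s}$ and using boundedness of the critical Hardy term together with strong convergence in a subcritical unweighted $L^\sigma$. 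Both are correct; yours is perhaps more transparent, the paper's avoids any cutoff. One small caution: your parenthetical suggestion that strong continuity of $H_q'$ could be extracted from the weak-to-weak statement is not right as stated (weak-to-weak continuity only yields strong-to-weak continuity), but the alternative you offer — strong convergence in $L^q(\Omega,dx/|x|^\alpha)$ via the continuous embedding followed by continuity of the Nemytskii map $v\mapsto|v|^{q-2}v$ from $L^q$ to $L^{q'}$ — is the correct argument and is all that is needed for the $C^1$ claim.
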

\begin{proof}
For every $p\leq q\leq p^*_\alpha,$ it follows by Lemma~\ref{sobhardy} that
\begin{equation*}
\int_\Omega\frac{|u|^{q}}{|x|^{\alpha}}dx=\int_\Omega\frac{|u|^{q}}{|x|^{\alpha q/p^*_\alpha}}\frac{1}{|x|^{\alpha (1-q/p^*_\alpha)}}dx
\leq C\Big(\int_\Omega\frac{|u|^{p^*_\alpha}}{|x|^{\alpha}}dx\Big)^{q/p^*_\alpha}\leq C[u]_{s,p}^q.
\end{equation*}
Then, as it can be readily checked, \eqref{deriveta}
holds and $H_q'(u)\in W^{-s,p'}(\Omega)$ for all $u\in W^{s,p}_0(\Omega)$, since
$$
|\langle H_q'(u),\varphi\rangle|\leq C[u]_{s,p}^{q-1}[\varphi]_{s,p},\quad
\text{for all $\varphi \in W^{s,p}_0(\Omega)$.}
$$
Furthermore,  $H_q':W^{s,p}_0(\Omega)\to W^{-s,p'}(\Omega)$ is strong-to-strong continuous, proving that $H_q\in C^1$.
Let us prove the last assertions. If $u_n\weakto u$ in $W^{s,p}_0(\Omega)$, up to a subsequence, $u_n\to u$ in $L^\sigma(\Omega)$
for every $1\leq\sigma<p^*$ and a.e. in $\R^N$. Then, we get
\[
\begin{split}
\int_\Omega\frac{|u_n-u|^{q}}{|x|^{\alpha}}dx
& = \int_\Omega\frac{|u_n-u|^{\frac{\alpha}{s}}}{|x|^{\alpha}}|u_n-u|^{q-\frac{\alpha}{s}}dx \leq\Big(\int_\Omega\frac{|u_n-u|^{p}}{|x|^{ps}}dx\Big)^{\frac{\alpha}{ps}}
\Big(\int_\Omega |u_n-u|^{\sigma} dx\Big)^{\frac{ps-\alpha}{ps}}, \\
& \leq C\Big(\int_\Omega |u_n-u|^{\sigma} dx\Big)^{\frac{ps-\alpha}{ps}},
\end{split}
\]
where $\sigma:=(q-\frac{\alpha}{s})\frac{ps}{ps-\alpha}\in [1,p^*[$, which shows the weak continuity of $H_q$. Finally, to prove the weak-to-weak continuity of $H'_q$, we proceed as in Lemma \ref{wtwcont}. Observe that $\{u_n\}_n$ is bounded in $W^{s,p}_0(\Omega)$, and thus by the strong continuity of $H_q'$, so is $\{H_q'(u_n)\}_n\subseteq W^{-s,p'}(\Omega)$. Up to subsequences, we can assume that $H'_q(u_n)\weakto \Lambda\in W^{-s, p'}(\Omega)$ and by a standard sub-subsequence argument it suffices to show that $H'_q(u)=\Lambda$. Notice that  $\{|u_n|^{q-2}u_n\}_n$ is bounded in $L^{q'}(\Omega, dx/|x|^\alpha)$ due to H\"older and Hardy-Sobolev inequality, so that we can assume that $|u_n|^{q-2}u_n\weakto h$ in $L^{q'}(\Omega, dx/|x|^\alpha)$. Since $|u_n|^{q-2}u_n\to |u|^{q-2}u$ a.e., we get that $h=|u|^{q-2}u$. But then the representation \eqref{deriveta} implies, for any $\varphi\in W^{s,p}_0(\Omega)\subseteq L^q(\Omega, dx/|x|^{\alpha})$,
\[
\langle \Lambda, \varphi\rangle =\lim_n\langle H'_q(u_n), \varphi\rangle=\lim_n\int_\Omega |u_n|^{q-2}u_n\, \varphi\, \frac{dx}{|x|^{\alpha}}=\int_\Omega\frac{|u|^{q-2}u\, \varphi}{|x|^{\alpha}}\, dx=\langle H'_q(u), \varphi\rangle.
\]
\end{proof}

Combining the last assertion of the previous Lemma  with Lemma \ref{wtwcont} provides the following weak continuity result.
\begin{corollary}
\label{wtwcor}
For any  $\lambda, \mu\in \R$, $1\leq r\leq p^*$ and $1\leq q\leq p^*_\alpha$, $\J\in C^1$ and $\J':W^{s,p}_0(\Omega)\to W^{-s, p'}(\Omega)$ is both a strong-to-strong and weak-to-weak continuous operator.
\end{corollary}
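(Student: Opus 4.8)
The goal is to show that $\J\in C^1$ with $\J'$ both strong-to-strong and weak-to-weak continuous. The plan is to decompose $\J$ into its three building blocks and invoke the results already established. Write
\[
\J(u)=\frac{1}{p}[u]_{s,p}^p-\frac{\lambda}{r}\,H^0_r(u)-\frac{\mu}{q}\,H_q(u),
\]
where $H_q$ is the Hardy-type functional of Lemma~\ref{sob-func} (with weight $|x|^{-\alpha}$, $\alpha\in[0,ps]$, $p\le q\le p^*_\alpha$), and $H^0_r$ is the analogous functional with $\alpha=0$, i.e. $H^0_r(u)=\int_\Omega |u|^r\,dx$, which is covered by the $\alpha=0$ instance of Lemma~\ref{sob-func} since $1\le r\le p^*=p^*_0$. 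The $C^1$ regularity of each summand has been proved: the map $u\mapsto \frac1p[u]_{s,p}^p$ is $C^1$ with differential $(-\Delta_p)^s$ by the discussion following \eqref{splap}, and $H_q, H^0_r$ are $C^1$ by Lemma~\ref{sob-func}. Linearity of differentiation then gives $\J\in C^1$ with
\[
\J'(u)=(-\Delta_p)^s u-\lambda\,(H^0_r)'(u)-\mu\,H_q'(u).
\]

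For the continuity statements I would argue termwise. The operator $(-\Delta_p)^s$ is weak-to-weak continuous by Lemma~\ref{wtwcont}, and it is strong-to-strong continuous because it is the differential of a $C^1$ functional. By Lemma~\ref{sob-func}, both $(H^0_r)'$ (case $\alpha=0$, $1\le r\le p^*$) and $H_q'$ (case $p\le q\le p^*_\alpha$) are strong-to-strong continuous and weak-to-weak continuous; one small point worth noting is that Lemma~\ref{sob-func} is stated for $q\in[p,p^*_\alpha]$, but the range $1\le r<p$ for the unweighted term is even easier and handled identically (or one may simply restrict to $r\ge p$ as in all the theorems and the lemma applies verbatim with $\alpha=0$). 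Since finite linear combinations of (strong-to-strong, resp. weak-to-weak) continuous operators between Banach spaces are again continuous in the same sense — using that $W^{-s,p'}(\Omega)$ is a topological vector space under both its strong and weak topologies — the corresponding combination $\J'$ inherits both continuity properties.

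The only genuinely substantive content here has already been done in Lemmas~\ref{wtwcont} and \ref{sob-func}; the present corollary is just the assembly, so there is no real obstacle. If anything, the mild subtlety is checking that weak-to-weak continuity is preserved under linear combinations: given $u_n\weakto u$, each of the three terms converges weakly in $W^{-s,p'}(\Omega)$ by the cited lemmas, and weak limits add, so $\J'(u_n)\weakto \J'(u)$; the strong case is identical with norm convergence. I would state this in a sentence and cite Lemmas~\ref{wtwcont} and \ref{sob-func} for the three pieces.
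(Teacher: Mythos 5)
Your proposal is correct and follows exactly the route the paper takes: the corollary is stated there without a separate proof, being obtained precisely by combining the weak-to-weak continuity of $(-\Delta_p)^s$ from Lemma \ref{wtwcont} with the last assertion of Lemma \ref{sob-func} applied to the two lower-order terms (the unweighted one via the $\alpha=0$ instance), and summing. Your parenthetical caution about the range $1\leq r<p$ is well placed, since Lemma \ref{sob-func} is only stated for exponents at least $p$ and all later applications indeed take $r,q\geq p$.
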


This corollary justifies the definition of the Nehari manifold associated to $\J$ as
\[
{\mathcal N}=\{u\in W^{s, p}_0(\Omega)\setminus \{0\}: \langle \J'(u), u\rangle=0\}
\]
with subsets
\[
{\mathcal N}_+={\mathcal N}\cap \{u\geq 0\} \qquad {\mathcal N}_-={\mathcal N}\cap \{u\leq 0\}.
\]
Let $u^+=\max \{0, u\}$, $u^-=\min\{ u, 0\}$.
\begin{lemma}
\label{25}
For any $u\in W^{s,p}_0(\Omega)$ it holds
\begin{equation}
\label{b}
\langle (-\Delta_p)^su^\pm, u^\pm\rangle \leq \langle (-\Delta_p)^su, u^\pm\rangle\leq \langle (-\Delta_p)^su, u\rangle
\end{equation}
with strict inequality as long as $u$ is sign-changing.
\end{lemma}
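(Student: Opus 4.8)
The plan is to prove the pointwise inequality for the integrand, then integrate, using the formula \eqref{splap} for the duality pairing. First I would fix $x,y\in\R^N$ and set $a=u(x)$, $b=u(y)$, so that $u^+(x)=a^+$, $u^+(y)=b^+$ (and similarly for the negative parts, with the convention $u^-=\min\{u,0\}\leq 0$). The quantity $\langle(-\Delta_p)^su,u^+\rangle$ has integrand proportional to $|a-b|^{p-2}(a-b)\,(a^+-b^+)$, while $\langle(-\Delta_p)^su^+,u^+\rangle$ has integrand $|a^+-b^+|^{p-2}(a^+-b^+)^2$. I would argue that for all real $a,b$,
\[
|a-b|^{p-2}(a-b)\,(a^+-b^+)\ \geq\ |a^+-b^+|^{p},
\]
and symmetrically with $u^-$ in place of $u^+$ (here using $u^-\le 0$, the analogous statement being $|a-b|^{p-2}(a-b)(a^--b^-)\geq|a^--b^-|^p$). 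Summing these two pointwise inequalities and noting $a^+-b^+ + a^--b^- = a-b$, together with the elementary fact that $|a^+-b^+|^p + |a^--b^-|^p \le |a-b|^p$, would give
\[
|a^+-b^+|^p\ \leq\ |a-b|^{p-2}(a-b)(a^+-b^+)\ \leq\ |a-b|^p,
\]
which upon division by $|x-y|^{N+ps}$ and integration over $\R^{2N}$ yields exactly \eqref{b} for $u^+$; the case of $u^-$ is identical. I would treat the case $a=b$ (where the factor $|a-b|^{p-2}$ is only a problem when $p<2$) by noting that then $a^+=b^+$ and the inequalities are trivial, and otherwise $|a-b|>0$ so the expression is well-defined.

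The key pointwise estimate is checked by cases on the signs of $a,b$. If both $a,b\ge0$ then $a^+-b^+=a-b$ and all three quantities coincide. If both $a,b\le0$ then $a^+=b^+=0$ and the middle term vanishes while $|a-b|^p\ge0$; moreover $a^+-b^+=0\le|a-b|^{p-2}(a-b)\cdot 0$, consistent. The only genuine case is $a\ge0>b$ (the case $b\ge0>a$ being symmetric by antisymmetry of both sides under swapping $x\leftrightarrow y$): then $a^+-b^+=a>0$ and $a-b=a-b>a>0$, so $|a-b|^{p-2}(a-b)a=(a-b)^{p-1}a\ge a^{p-1}\cdot a=a^p=|a^+-b^+|^p$ because $a-b\geq a\geq 0$ and $t\mapsto t^{p-1}$ is nondecreasing on $[0,\infty)$; and $(a-b)^{p-1}a\leq(a-b)^{p-1}(a-b)=(a-b)^p=|a-b|^p$ because $0\le a\le a-b$. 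For strict inequality when $u$ is sign-changing, I would observe that then the set where (say) $u(x)>0>u(y)$ has positive measure in $\R^{2N}$, and on that set the inequality $a^{p}<(a-b)^{p-1}a<(a-b)^p$ is strict (as $b<0$ forces $a<a-b$ strictly), so integrating gives strict inequality in \eqref{b}.

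The main obstacle, such as it is, is purely bookkeeping: making sure the case analysis on signs of $a,b$ is exhaustive, that the antisymmetry under $(x,y)\mapsto(y,x)$ of the integrands is correctly invoked to reduce the number of cases, and that the auxiliary inequality $|a^+-b^+|^p+|a^--b^-|^p\le|a-b|^p$ (needed only for the right-hand inequality of \eqref{b}, and itself a consequence of the same case analysis) is recorded. There is no analytic subtlety — everything is elementary real-variable inequalities — but care is needed with the sign conventions $u^-\le0$ so that $u^++u^-=u$ rather than $u^+-u^-$, and with the degenerate locus $a=b$ when $1<p<2$. An alternative, slicker route for the left inequality is to invoke convexity of $t\mapsto\frac1p|t|^p$: since $u^+$ is obtained from $u$ by the $1$-Lipschitz map $t\mapsto t^+$, one has the standard fact $\langle(-\Delta_p)^su,(u-u^+)\rangle\le 0$ wait — more cleanly, $\langle(-\Delta_p)^su - (-\Delta_p)^su^+,\,u^+\rangle\ge 0$ can be read off the pointwise inequality above, so I would still ultimately reduce to the elementary estimate; I expect the direct case analysis to be the cleanest to write.
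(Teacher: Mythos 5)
Your proof is correct and follows essentially the same route as the paper: both reduce \eqref{b} to the pointwise inequality $\phi_p(a_+-b_+)(a_+-b_+)\leq \phi_p(a-b)(a_+-b_+)\leq \phi_p(a-b)(a-b)$ for the integrand (the paper justifies it via monotonicity of $\phi_p(t)=|t|^{p-2}t$ and of $t\mapsto t_+$, you via an equivalent sign case analysis) and then obtain strictness by integrating over the positive-measure set $\{u(x)>0>u(y)\}$.
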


\begin{proof}
We sketch the proof for $u^+$, the other one being identical. The statement follows by integration of the pointwise inequalities
\begin{equation}
\label{glk}
\phi_p(a_+-b_+)(a_+-b_+)\leq \phi_p(a-b)(a_+-b_+)\leq \phi_p(a-b)(a-b),
\end{equation}
where $\phi_p(t)=|t|^{p-2}t$. Notice that $\phi_p$ is strictly increasing and $t\mapsto t_+$ is non-decreasing, so that we can suppose $a>b$. Then $a_+-b_+\leq a-b$, with strict inequality as long as $b<0$, and thus the monotonicity of $\phi_p$ gives the conclusion. Finally, if $u$ is sign changing, $\{(x, y): u(x)>0>u(y)\}$ has positive measure in $\R^{2N}$ and on such set the previous inequality is strict.
\end{proof}

\begin{remark}\rm
\label{remneh}
As a corollary of the previous lemma, let us remark another fundamental (and more impactful) difference from the local case. Clearly, if $u$ is a sign-changing solution to \eqref{frac1}, then $u\in {\mathcal N}\setminus \left({\mathcal N}_+\cup{\mathcal N}_-\right)$. However
\[
u\in {\mathcal N}\setminus \left({\mathcal N}_+\cup{\mathcal N}_-\right)\quad \Rightarrow\quad \text{$u^+\notin {\mathcal N}$ or $u^-\notin {\mathcal N}$},
\]
since, otherwise, the inequality
\[
\langle \J'(u^+), u^+\rangle+\langle \J'(u^-), u^-\rangle<\langle \J'(u), u^+\rangle+\langle \J'(u), u^-\rangle=\langle \J'(u), u\rangle,
\]
yields a contradiction.
\end{remark}
The following simple observation will be used throughout the paper.

\begin{lemma}
\label{lemmalb}
Let  $\alpha\leq ps<N$, $q$ and $r$ satisfy $p\leq q\leq p^*_\alpha$, $p\leq r\leq p^*$, with $\max\{q, r\}>p$. For any $\lambda, \mu$ satisfying
\[
\begin{cases}
\lambda>0,\  \mu>0&\text{if $\min\{q, r\}>p$},\\
0<\lambda<\lambda_1,\  \mu>0&\text{if $r=p$},\\
\lambda>0, \ 0<\mu<\lambda_{1,\alpha}&\text{if $q=p$},\\
\end{cases}
\]
there exists $\delta_0=\delta_0(\Omega, N, p, s, \alpha)>0$ such that for any $u\in W^{s,p}_0(\Omega)$ it holds
\[
\langle \J'(u), u\rangle\leq  0\quad \Rightarrow \quad [u]_{s,p}\geq \delta_0.
\]
\end{lemma}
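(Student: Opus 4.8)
\emph{Plan of proof.} The strategy is to make the hypothesis $\langle \J'(u),u\rangle\leq 0$ explicit, bound the two nonlinear terms by powers of $[u]_{s,p}$ through the embeddings already at our disposal, and then observe that the resulting inequality cannot survive for $[u]_{s,p}$ small. We may assume $u\not\equiv 0$. Since
\[
\langle \J'(u),u\rangle=[u]_{s,p}^p-\lambda\int_\Omega|u|^r\,dx-\mu\int_\Omega\frac{|u|^q}{|x|^\alpha}\,dx,
\]
the hypothesis is equivalent to
\[
[u]_{s,p}^p\leq \lambda\int_\Omega|u|^r\,dx+\mu\int_\Omega\frac{|u|^q}{|x|^\alpha}\,dx.
\]
Three estimates enter: by the continuous embedding $W^{s,p}_0(\Omega)\hookrightarrow L^r(\Omega)$ (which holds for every $r\in[p,p^*]$, being a consequence of Lemma~\ref{sobhardy} with $\alpha=0$ and H\"older's inequality on the bounded set $\Omega$) we get $\int_\Omega|u|^r\,dx\leq C_1\,[u]_{s,p}^r$; by Lemma~\ref{sobhardy} and H\"older, exactly as in the proof of Lemma~\ref{sob-func}, we get $\int_\Omega|u|^q/|x|^\alpha\,dx\leq C_2\,[u]_{s,p}^q$; and by the very definitions in \eqref{lambdaalpha} one has $\int_\Omega|u|^p\,dx\leq \lambda_1^{-1}[u]_{s,p}^p$ and $\int_\Omega|u|^p/|x|^\alpha\,dx\leq \lambda_{1,\alpha}^{-1}[u]_{s,p}^p$. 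Here $C_1,C_2>0$ depend only on $\Omega,N,p,s,\alpha,q,r$.

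Now I would run the three alternatives of the statement separately. If $\min\{q,r\}>p$, inserting the Sobolev and Hardy--Sobolev bounds and dividing the displayed inequality by $[u]_{s,p}^p>0$ yields $1\leq \lambda C_1\,[u]_{s,p}^{r-p}+\mu C_2\,[u]_{s,p}^{q-p}$; since both exponents are strictly positive, the right-hand side is an increasing function of $[u]_{s,p}$ vanishing at $0$, so this forces $[u]_{s,p}\geq\delta_0$, with $\delta_0>0$ the unique positive solution of $\lambda C_1\,t^{r-p}+\mu C_2\,t^{q-p}=1$. If $r=p$ (whence $q>p$), I would instead estimate the first integral by $\lambda_1$, obtaining $(1-\lambda/\lambda_1)[u]_{s,p}^p\leq \mu C_2\,[u]_{s,p}^q$; here $\lambda<\lambda_1$ makes the coefficient on the left strictly positive, and dividing by $[u]_{s,p}^p$ gives $[u]_{s,p}^{q-p}\geq(1-\lambda/\lambda_1)/(\mu C_2)$. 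The case $q=p$ (whence $r>p$) is entirely symmetric: using $\lambda_{1,\alpha}$ and $0<\mu<\lambda_{1,\alpha}$ one arrives at $(1-\mu/\lambda_{1,\alpha})[u]_{s,p}^p\leq \lambda C_1\,[u]_{s,p}^r$ and hence $[u]_{s,p}^{r-p}\geq(1-\mu/\lambda_{1,\alpha})/(\lambda C_1)$. Taking $\delta_0$ to be the smallest of the three bounds finishes the argument.

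The only point requiring care — and, indeed, the reason for the hypotheses on $\lambda$ and $\mu$ — lies in the degenerate cases $r=p$ and $q=p$. There the crude embedding $\int_\Omega|u|^p\,dx\leq C[u]_{s,p}^p$ (resp.\ its weighted analogue) would only produce the vacuous inequality $[u]_{s,p}^p\leq [u]_{s,p}^p+(\text{nonnegative})$; one must instead compare the $L^p$- (resp.\ weighted $L^p$-) norm with $[u]_{s,p}^p$ \emph{with the sharp constant} $\lambda_1^{-1}$ (resp.\ $\lambda_{1,\alpha}^{-1}$) and then use that $\lambda$ (resp.\ $\mu$) is strictly below the corresponding first eigenvalue to retain a strictly positive coefficient in front of $[u]_{s,p}^p$. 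Once this is in place, everything reduces to the elementary fact that an inequality $c_0\,[u]_{s,p}^p\leq c_1\,[u]_{s,p}^t$ with $c_0,c_1>0$ and $t>p$ cannot hold for $[u]_{s,p}$ arbitrarily small.
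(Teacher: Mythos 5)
Your proposal is correct and follows essentially the same route as the paper: bound the two nonlinear terms via the Sobolev and Hardy--Sobolev embeddings (using the sharp eigenvalue constants $\lambda_1$, $\lambda_{1,\alpha}$ precisely in the degenerate cases $r=p$, $q=p$), and conclude that the resulting inequality fails for $[u]_{s,p}$ small. The case split and the role of the hypotheses on $\lambda,\mu$ match the paper's argument exactly.
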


\begin{proof}
Applying H\"older and Hardy-Sobolev inequality on the last two terms of
\[
\langle \J'(u), u\rangle=[u]_{s,p}^p-\lambda \int_\Omega |u|^r\, dx-\mu \int_\Omega\frac{|u|^q}{|x|^\alpha}\, dx,
\]
we obtain
\[
\langle \J'(u), u\rangle\geq
\begin{cases}
\left(1-\dfrac{\lambda}{\lambda_1}-C[u]_{s, p}^{q-p}\right)[u]_{s, p}^p&\text{if $r=p$},\\[10pt]
\left(1-\dfrac{\mu}{\lambda_{1,\alpha}}-C[u]_{s, p}^{r-p}\right)[u]_{s, p}^p&\text{if $q=p$},\\[10pt]
\left(1-C [u]_{s, p}^{r-p}-C [u]_{s, p}^{q-p}\right)[u]_{s, p}^p&\text{if $\min\{q, r\}>p$}.
\end{cases}
\]
The assumption $\langle J'(u), u\rangle\leq  0$ forces the parenthesis above to be non-positive, which provides the claimed lower bound.
\end{proof}

\subsection{Properties of Hardy optimizers}

In \cite{brasco} the existence and properties of solutions for the minimization problem \eqref{criticalfrachardyg} when $\alpha=0$ was investigated.
For $0\leq\alpha< ps$, one can get the following results \cite[Theorem 1.1]{mm}.

\begin{proposition}[Existence and properties]
	\label{extral}
Let $0\leq \alpha< ps<N$. Then the following facts hold.

(i) Problem \eqref{criticalfrachardyg} admits constant sign solutions, and any solution is bounded;

(ii) For every nonnegative $U_\alpha\in  D^{s,p}(\R^N)$ solving \eqref{criticalfrachardyg}, there exist $x_0\in\R^N$ and a non-increasing $u:\mathbb{R}^+\to\mathbb{R}$ such that $U_\alpha(x)=u(|x-x_0|)$;

(iii) Every nonnegative minimizer $U_\alpha\in  D^{s,p}(\R^N)$ of \eqref{criticalfrachardyg} weakly solves
\begin{eqnarray*}
(-\Delta)_p^s U_\alpha =S_\alpha\frac{U_\alpha^{p^*_\alpha-1}}{|x|^\alpha}\ \ \
{\rm in}\ \R^N.
\end{eqnarray*}
i.e.,
\[
\langle (-\Delta_p)^sU_\alpha, \varphi\rangle=S_\alpha\int_{\R^N}\frac{U_\alpha^{p^*_\alpha-1}}{|x|^\alpha}\, \varphi\, dx, \quad \forall \varphi\in D^{s,p}(\R^N)
\]
and the last integrand is absolutely integrable.

\end{proposition}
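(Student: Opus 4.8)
\emph{Proof plan.} All three assertions are close to classical — they constitute \cite[Theorem 1.1]{mm} — so I only outline the scheme, adapted to $(-\Delta_p)^s$. For \emph{existence of a constant-sign minimizer} (i) the plan is a concentration-compactness argument in $D^{s,p}(\R^N)$. Given a minimizing sequence $\{u_n\}$ for \eqref{criticalfrachardyg}, I first replace $u_n$ by $|u_n|$: this does not change the constraint $\int_{\R^N}|u|^{p^*_\alpha}/|x|^\alpha\,dx=1$ and does not increase $[\,\cdot\,]_{s,p}$, since $\bigl||u_n(x)|-|u_n(y)|\bigr|\le|u_n(x)-u_n(y)|$. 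So I may assume $u_n\ge0$ and, by boundedness in $D^{s,p}(\R^N)$ and reflexivity, $u_n\weakto U_\alpha\ge0$ along a subsequence. The obstruction is that $u\mapsto\int|u|^{p^*_\alpha}/|x|^\alpha\,dx$ is \emph{not} sequentially weakly continuous at the critical exponent, and here I would invoke the fractional Concentration-Compactness principle of \cite{MS}: it decomposes the weak-$*$ limits of $|u_n(x)-u_n(y)|^p|x-y|^{-N-ps}\,dx\,dy$ and of $|u_n|^{p^*_\alpha}|x|^{-\alpha}\,dx$ into a regular part plus at most countably many atoms $(\nu_j,\mu_j)$ with $\mu_j^{p/p^*_\alpha}\le\nu_j/S_\alpha$. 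Using the dilation invariance of the Rayleigh quotient in \eqref{criticalfrachardyg} — and, only when $\alpha=0$, also translation invariance — I normalize the sequence so that a fixed positive fraction of the critical mass lies in $B_1$; the strict subadditivity produced by $p<p^*_\alpha$ (strict concavity of $t\mapsto t^{p/p^*_\alpha}$) then excludes vanishing and splitting of the mass, giving $\int_{\R^N}|U_\alpha|^{p^*_\alpha}/|x|^\alpha\,dx=1$. Lower semicontinuity of the seminorm yields $[U_\alpha]_{s,p}^p\le\liminf_n[u_n]_{s,p}^p=S_\alpha\le[U_\alpha]_{s,p}^p$, so $U_\alpha$ is a nonnegative minimizer; since $-U_\alpha$ is one too, \eqref{criticalfrachardyg} has constant-sign solutions.

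\emph{Euler-Lagrange equation and boundedness (iii).} For a nonnegative minimizer $U_\alpha$ I would use the Lagrange multiplier rule. The constraint functional $H(u):=\int_{\R^N}|u|^{p^*_\alpha}/|x|^\alpha\,dx$ is $C^1$ on $D^{s,p}(\R^N)$, exactly as in Lemma~\ref{sob-func}, and $\langle H'(U_\alpha),U_\alpha\rangle=p^*_\alpha H(U_\alpha)=p^*_\alpha\neq0$, so $H'(U_\alpha)\neq0$ in $W^{-s,p'}(\R^N)$; hence there is $\Lambda\in\R$ with $(-\Delta_p)^sU_\alpha=\Lambda\,U_\alpha^{p^*_\alpha-1}/|x|^\alpha$ weakly. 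Pairing with $U_\alpha$ gives $[U_\alpha]_{s,p}^p=\Lambda\int_{\R^N}U_\alpha^{p^*_\alpha}/|x|^\alpha\,dx=\Lambda$, and since $[U_\alpha]_{s,p}^p=S_\alpha$, we conclude $\Lambda=S_\alpha$. Absolute integrability of $U_\alpha^{p^*_\alpha-1}\varphi/|x|^\alpha$ for $\varphi\in D^{s,p}(\R^N)$ follows from H\"older with the weight $dx/|x|^\alpha$, since $U_\alpha^{p^*_\alpha-1}\in L^{(p^*_\alpha)'}(\R^N,dx/|x|^\alpha)$ (because $(p^*_\alpha-1)(p^*_\alpha)'=p^*_\alpha$) and $\varphi\in L^{p^*_\alpha}(\R^N,dx/|x|^\alpha)$ by Lemma~\ref{sobhardy}. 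Boundedness of \emph{any} solution then comes from a De Giorgi-Moser iteration on this equation, as in \cite{brasco} for $\alpha=0$; the nonlinearity is in the right Lebesgue class by Hardy-Sobolev, and near $0$ the singular weight is absorbed using $\alpha<ps$.

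\emph{Radial monotone symmetry (ii).} Here I would use polarization (two-point rearrangement). Let $H\subset\R^N$ be a half-space whose boundary hyperplane contains $0$, let $\sigma_H$ be the corresponding reflection and $u^H$ the polarization of a nonnegative $u$ with respect to $H$. Because $|x|^{-\alpha}$ is $\sigma_H$-invariant, $\int_{\R^N}|u^H|^{p^*_\alpha}/|x|^\alpha\,dx=\int_{\R^N}|u|^{p^*_\alpha}/|x|^\alpha\,dx$, and the nonlocal polarization inequality gives $[u^H]_{s,p}\le[u]_{s,p}$. Thus $U_\alpha^H$ is again a minimizer, so equality holds in the polarization inequality, and its strict form — equality forces $u^H=U_\alpha$ or $u^H=U_\alpha\circ\sigma_H$ a.e., by strict convexity of $t\mapsto|t|^p$ and positivity of the kernel $|x-y|^{-N-ps}$ — shows that $U_\alpha$ coincides with its polarization, up to $\sigma_H$, for every such $H$. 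A standard rigidity result for functions invariant under all polarizations about a fixed point then gives $U_\alpha(x)=u(|x|)$ with $u$ nonincreasing. When $\alpha=0$ the weight is constant, so polarizations about \emph{any} affine hyperplane are admissible, and the same argument yields radial monotonicity about some point $x_0$, not necessarily the origin.

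\emph{Main obstacles.} Two steps carry the real weight. First, in (i), excluding dichotomy for a problem which is only \emph{exactly} (not strictly) subadditive: one cannot use a naive subadditivity comparison and must instead work with the localized defect measures of the Concentration-Compactness principle together with the dilation normalization. Second, in (ii), the rigidity step — characterizing the equality cases of the polarization inequality for $[\,\cdot\,]_{s,p}$ and upgrading "invariant under each polarization" to genuine radial symmetry and monotonicity — which is the delicate analytic input. The Lagrange multiplier computation, the identification $\Lambda=S_\alpha$, and the Moser iteration for boundedness are routine.
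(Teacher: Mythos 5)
The paper offers no proof of this proposition: it is imported verbatim from \cite[Theorem 1.1]{mm} (and, for $\alpha=0$, from \cite{brasco}), so there is no internal argument to measure yours against. At the level of strategy your outline is the standard one and essentially the one used in those references: concentration--compactness modulo dilations for existence, Lagrange multipliers plus a Moser-type iteration for (iii) and for boundedness, and a symmetry/rearrangement argument for (ii). The routine parts are fine as sketched: the identification $\Lambda=S_\alpha$, and the absolute integrability of $U_\alpha^{p^*_\alpha-1}\varphi/|x|^\alpha$ via H\"older in $L^{p^*_\alpha}(\R^N,dx/|x|^\alpha)$, are correct as written.

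Two steps, as you describe them, would not close. First, in (i) the normalization ``a fixed positive fraction of the critical mass lies in $B_1$'' does not by itself exclude the genuinely critical alternative, namely that the whole mass collapses into a single atom at the origin inside $B_1$; you need the half-mass dilation normalization (rescale so that $\int_{B_1}|u_n|^{p^*_\alpha}|x|^{-\alpha}\,dx=\tfrac12$), which caps both the atom at $0$ and the mass escaping to infinity strictly below $1$, after which the strict concavity of $t\mapsto t^{p/p^*_\alpha}$ forces the weak limit to carry all the mass. Second, and more seriously, in (ii) the only polarizations compatible with the weight when $\alpha>0$ are those with $0\in\partial H$, and the dichotomy ``$U_\alpha^H=U_\alpha$ or $U_\alpha^H=U_\alpha\circ\sigma_H$'' for all such $H$ is satisfied by every foliated Schwarz symmetric function $f(|x|,\langle x/|x|,e\rangle)$ with $f$ monotone in the angular variable; it therefore cannot imply radial symmetry and monotonicity. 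The rigidity result you invoke needs the full family of affine half-spaces, which the weight rules out. The statement as given --- radiality about some $x_0$, monotone in $|x-x_0|$, with $x_0=0$ then forced by the Hardy--Littlewood inequality when $\alpha>0$ --- is really the equality case of the P\'olya--Szeg\H{o}-type rearrangement inequality for the Gagliardo seminorm (a Burchard-type theorem), and that equality case is precisely where the cited reference invests its work; your plan should replace the polarization rigidity step by it, or supplement polarization with a cap-symmetrization/strong-comparison argument on the Euler--Lagrange equation.
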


\noindent
Next we {\em fix} $N,p,s,\alpha$ and a positive radially symmetric  decreasing minimizer
$U_\alpha=U_\alpha(r)$ for $S_\alpha$ as in \eqref{criticalfrachardyg}.
By multiplying $U_\alpha$ by a positive constant, we may assume\footnote{notice that we are using here that $p^*_\alpha\neq p$ since $\alpha<ps$.} that
\begin{align}\label{eqrneq}
(-\Delta)_p^s U_\alpha = \frac{ U_\alpha^{p^*_\alpha-1}}{|x|^\alpha}\quad
\text{weakly in  $\R^N$}.
\end{align}
Testing this equation by $U_\alpha$ and using \eqref{criticalfrachardyg} yield
\begin{align}\label{eqrn}
[U_\alpha]_{s,p}^p= \int_{\R^N}\frac{U_\alpha^{p^*_\alpha}}{|x|^\alpha}dx=S_\alpha^{\frac{N-\alpha}{ps-\alpha}} .
\end{align}
In \cite{brasco} the asymptotic behaviour for $U_\alpha$ was obtained when $\alpha=0,$ while in \cite{mm} the asymptotics for $U_\alpha$ for all $0<\alpha< ps$ was derived by similar arguments.

\begin{lemma}[Optimal decay]
	\label{leesti}
There exist $c_1>0$ and $c_2>0$ such that
\begin{equation*}
\frac{c_1}{r^{{\frac{N-ps}{p-1}}}}\leq U_\alpha(r)\leq \frac{c_2}{r^{{\frac{N-ps}{p-1}}}},
\,\,\quad\text{for all $r\geq 1$.}
\end{equation*}
Furthermore, there exists $\theta>1$
such that 
\begin{equation}
\label{ut}
U_\alpha(\theta r)\leq \frac{1}{2}U_\alpha(r)\quad \text{for all $r\geq 1$}.
\end{equation}
\end{lemma}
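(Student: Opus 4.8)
The statement to prove is Lemma \ref{leesti}, the optimal decay for the Hardy optimizer $U_\alpha$. The plan is to combine the equation \eqref{eqrneq} satisfied by $U_\alpha$ with comparison-type arguments adapted to the fractional $p$-Laplacian, mirroring the structure used in \cite{brasco} for $\alpha=0$ and spelled out for general $\alpha$ in \cite{mm}. First I would record that by Proposition \ref{extral}(i)--(ii), $U_\alpha$ is a bounded, radial, radially nonincreasing solution of \eqref{eqrneq}, so $U_\alpha\in L^\infty(\R^N)$ and $U_\alpha(r)\to 0$ as $r\to\infty$ by the Hardy--Sobolev membership $U_\alpha\in D^{s,p}(\R^N)$ together with monotonicity. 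Since $U_\alpha$ is radial and decreasing, controlling $U_\alpha(r)$ for $r\ge 1$ is equivalent to controlling its tail.

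For the \emph{upper bound} I would exploit the fact that away from the origin the right-hand side of \eqref{eqrneq}, namely $U_\alpha^{p^*_\alpha-1}/|x|^\alpha$, is small and integrable: on $\{|x|\ge 1\}$ one has $|x|^{-\alpha}\le 1$ and $U_\alpha^{p^*_\alpha-1}$ decaying, so $(-\Delta_p)^s U_\alpha$ is, in that region, dominated by a well-behaved source. The natural comparison function is $w(x)=c\,|x|^{-(N-ps)/(p-1)}$, which is (up to the singularity at $0$) the fundamental-type solution making $(-\Delta_p)^s w$ vanish away from $0$ in the appropriate sense; more precisely one checks $(-\Delta_p)^s$ applied to a suitable truncation/barrier built from $w$ controls the source term $U_\alpha^{p^*_\alpha-1}/|x|^\alpha$ from above on an exterior domain. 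A weak comparison principle for $(-\Delta_p)^s$ on exterior domains (with the ordering $U_\alpha\le w$ enforced on the complement, which holds after choosing $c$ large since $U_\alpha$ is bounded and $w\to\infty$ near $\partial B_1$) then yields $U_\alpha(r)\le c_2 r^{-(N-ps)/(p-1)}$ for $r\ge 1$. The exponent $(N-ps)/(p-1)$ is exactly the one for which $|x|^{-(N-ps)/(p-1)}$ lies on the borderline of $D^{s,p}$ scaling, which is why it is the sharp decay rate.

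For the \emph{lower bound} I would argue dually: since $U_\alpha$ solves \eqref{eqrneq} with a strictly positive right-hand side, $(-\Delta_p)^s U_\alpha>0$, so $U_\alpha$ is ($s,p$)-superharmonic, and one produces a subsolution barrier of the same form $c_1|x|^{-(N-ps)/(p-1)}$ from below. Concretely, using that $U_\alpha\ge m>0$ on $\overline{B_1}$ (by continuity and positivity of the radial profile) and that the nonlocal contribution of the already-established tail decay is controlled, one shows $(-\Delta_p)^s$ of the lower barrier is $\le U_\alpha^{p^*_\alpha-1}/|x|^\alpha$ on the exterior domain for small enough $c_1$, and the comparison principle gives $U_\alpha\ge c_1 r^{-(N-ps)/(p-1)}$ for $r\ge 1$. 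Finally, \eqref{ut} is immediate from the two-sided bound: for $r\ge 1$,
\[
\frac{U_\alpha(\theta r)}{U_\alpha(r)}\le \frac{c_2 (\theta r)^{-(N-ps)/(p-1)}}{c_1 r^{-(N-ps)/(p-1)}}=\frac{c_2}{c_1}\,\theta^{-(N-ps)/(p-1)},
\]
so choosing $\theta$ large enough that $(c_2/c_1)\,\theta^{-(N-ps)/(p-1)}\le 1/2$ works.

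The main obstacle is the comparison argument itself: for $p\neq 2$ the fractional $p$-Laplacian lacks a linear structure, so one cannot simply subtract solutions, and the nonlocal tail terms (the integrals over the region where we have \emph{not yet} established decay) must be bootstrapped carefully — this is exactly where the arguments of \cite{brasco} and \cite{mm} do the delicate work, iterating a rough decay estimate into the sharp one and handling the weight $|x|^{-\alpha}$ in the source. I would therefore lean on those references for the technical comparison lemmas rather than reprove them, and present the decay of $U_\alpha$ as a consequence, with \eqref{ut} following by the elementary computation above.
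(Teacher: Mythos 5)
Your proposal is correct and follows essentially the same route as the paper, which states this lemma without proof and defers the two-sided decay bounds entirely to \cite{brasco} (for $\alpha=0$) and \cite{mm} (for $0<\alpha<ps$), exactly the references you lean on; your elementary derivation of \eqref{ut} from the two-sided bound by taking $\theta$ large is the intended one and is valid since $\tfrac{N-ps}{p-1}>0$. The only slip is cosmetic: the barrier $w(x)=c|x|^{-(N-ps)/(p-1)}$ blows up at the origin, not near $\partial B_1$, and the ordering on $\overline{B_1}$ is enforced simply by taking $c\geq\|U_\alpha\|_\infty$.
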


\noindent
For any $\eps>0$, the function
\begin{equation}
\label{Du}
U_{\alpha,\eps}(x):=\eps^{-\frac{N-ps}{p}}U_\alpha \left(\frac{x}{\eps}\right)
\end{equation}
is also a minimizer for $S_\alpha$ satisfying \eqref{eqrneq}.
We note that $c_1,c_2,\theta$ are universal since
we {\em fixed} $N,p,s,\alpha, U_\alpha$. In general they depend upon these entries.

\subsection{Truncations}
In what follows $0\leq \alpha<ps<N$, $U_\alpha$ is a fixed minimizer for \eqref{criticalfrachardyg}, $\theta$ is the constant in Lemma
\ref{leesti} depending only on $N$, $p$, $s$, $\alpha$ and $U_\alpha$. For every $\delta\geq \eps >0$, let us set
$$
m_{\eps,\delta}:=\frac{U_{\alpha,\eps}(\delta)}{U_{\alpha,\eps}(\delta)-U_{\alpha,\eps}(\theta\delta)}.
$$
Due to \eqref{ut} and the definition \eqref{Du}, it readily follows $m_{\eps, \delta}\leq 2$.
Furthermore, let us set
\begin{eqnarray*}
  g_{\eps,\delta}(t)=
  \left\{ \arraycolsep=1.5pt
\begin{array}{lll}
0,\ \  \ &
{\rm if}\ 0\leq t\leq U_{\alpha,\eps}(\theta\delta), \\[2mm]
m_{\eps,\delta}^p(t-U_{\alpha,\eps}(\theta\delta)),\ \  \ &
{\rm if}\  U_{\alpha,\eps}(\theta\delta)\leq t\leq U_{\alpha,\eps}(\delta), \\[2mm]
t+U_{\alpha,\eps}(\delta)(m_{\eps,\delta}^{p-1}-1), \ \ \quad & {\rm if}\ t\geq U_{\alpha,\eps}(\delta),
\end{array}
\right.
\end{eqnarray*}
as well as
\begin{eqnarray*}
G_{\eps,\delta}(t)=\int_0^t  g'_{\eps,\delta}(\tau)^{\frac{1}{p}}\, d\tau=
  \left\{ \arraycolsep=1.5pt
\begin{array}{lll}
0,\ \  \ &
{\rm if}\ 0\leq t\leq U_{\alpha,\eps}(\theta\delta), \\[2mm]
m_{\eps,\delta} (t-U_{\alpha,\eps}(\theta\delta)),\ \  \ &
{\rm if}\   U_{\alpha,\eps}(\theta\delta)\leq t\leq U_{\alpha,\eps}(\delta), \\[2mm]
t , \ \ \quad & {\rm if}\ t\geq U_{\alpha,\eps}(\delta).
\end{array}
\right.
\end{eqnarray*}
The functions $g_{\eps,\delta}$ and $G_{\eps,\delta}$ are nondecreasing and absolutely continuous. Consider now the radially symmetric nonincreasing function
\begin{eqnarray}\label{uepsdeltadef}
u_{\alpha,\eps,\delta}(r) :=  G_{\eps,\delta}(U_{\alpha,\eps}(r)),
\end{eqnarray}
which satisfies
\begin{eqnarray*}
u_{\alpha,\eps,\delta}(r)=  \left\{ \arraycolsep=1.5pt
\begin{array}{ll}
U_{\alpha,\eps}(r),\ \  \ &
{\rm if}\ r\leq   \delta,  \\[1mm]
0,\ \  \ &{\rm if}\   r\geq\theta\delta.
\end{array}
\right.
\end{eqnarray*}

\noindent
Then $u_{\alpha,\eps,\delta}\in W_{0}^{s,p}(\Omega)$, for
any $\delta<\theta^{-1}\delta_\Omega:=\theta^{-1}{\rm dist}(0,\partial\Omega)$.
We have the following estimates.

\begin{lemma}[Norm estimates I]
	\label{lebn}
There exists $C>0$ such that, for any $0<2\eps\leq\delta<\theta^{-1}\delta_\Omega$, there holds
\begin{align}
	\label{estqa}
	[u_{\alpha,\eps,\delta}]_{s,p}^p\leq S_\alpha^{\frac{N-\alpha}{ps-\alpha}}+C\left(\frac{\eps}{\delta}\right)^{ \frac{N-ps}{p-1} },
\end{align}
and
\begin{align}\label{estqaf}
 \int_{\R^N}\frac{u_{\alpha,\eps,\delta}(x)^{p^*_\alpha}}{|x|^\alpha}\, dx\geq S_\alpha^{\frac{N-\alpha}{ps-\alpha}}-C\left(\frac{\eps}{\delta}\right)^{\frac{N-\alpha}{p-1}}.
\end{align}
Moreover, for any $\beta>0$, there exists $C_\beta$ such that
\begin{align}
\label{estqal}
 \int_{\R^N}u_{\alpha,\eps,\delta}(x)^{\beta}\, dx\geq C_{\beta}
 \left\{ \arraycolsep=1.5pt
 \begin{array}{ll}
 \eps^{N-\frac{N-ps}{p}\beta}|\log\frac{\eps}{\delta}|,\ \  \ &
 {\rm if}\ \beta=\frac{p^*}{p'},\\[2mm]
 \eps^{\frac{N-ps}{p(p-1)}\beta}\delta^{N-\frac{N-ps}{p-1}\beta},\ \  \ &
 {\rm if}\ \beta<\frac{p^*}{p'},\\[2mm]
 \eps^{N-\frac{N-ps}{p}\beta},\ \  \ &
 {\rm if}\ \beta>\frac{p^*}{p'}.
 \end{array}
 \right.
\end{align}
\end{lemma}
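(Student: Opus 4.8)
The plan is to estimate each of the three quantities by exploiting that $u_{\alpha,\eps,\delta}$ agrees with $U_{\alpha,\eps}$ inside $B_\delta$, vanishes outside $B_{\theta\delta}$, and interpolates on the annulus via the composition with $G_{\eps,\delta}$. Throughout I will use the scaling $U_{\alpha,\eps}(x)=\eps^{-(N-ps)/p}U_\alpha(x/\eps)$, the normalization \eqref{eqrn}, and the decay bounds of Lemma~\ref{leesti}. It is convenient to first reduce to the case $\delta=1$ and then rescale: setting $v_{\eps}:=u_{\alpha,\eps,1}$ one has $u_{\alpha,\eps,\delta}(x)=\delta^{-(N-ps)/p}v_{\eps/\delta}(x/\delta)$ up to the obvious relabeling, so that $[u_{\alpha,\eps,\delta}]_{s,p}^p=[v_{\eps/\delta}]_{s,p}^p$, the weighted $L^{p^*_\alpha}$ norm is likewise scale invariant, and $\int u_{\alpha,\eps,\delta}^\beta\,dx=\delta^{N-\frac{N-ps}{p}\beta}\int v_{\eps/\delta}^\beta\,dx$. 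This reduces everything to estimates in terms of the single small parameter $t:=\eps/\delta\in\,]0,1/2]$.

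For \eqref{estqa} the key identity is the pointwise composition inequality from \cite{mpsy}: since $G_{\eps,\delta}(\tau)=\int_0^\tau g'_{\eps,\delta}(\sigma)^{1/p}\,d\sigma$ with $g_{\eps,\delta}$ nondecreasing, one has the Picone/convexity-type bound
\[
|G_{\eps,\delta}(a)-G_{\eps,\delta}(b)|^p\le |g_{\eps,\delta}(a)-g_{\eps,\delta}(b)|\,|a-b|^{p-1},
\]
valid for all $a,b\ge 0$, which upon integration against the kernel $|x-y|^{-(N+ps)}$ gives $[u_{\alpha,\eps,\delta}]_{s,p}^p=[G_{\eps,\delta}\circ U_{\alpha,\eps}]_{s,p}^p\le \langle (-\Delta_p)^s U_{\alpha,\eps}, g_{\eps,\delta}\circ U_{\alpha,\eps}\rangle$. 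Using the equation \eqref{eqrneq} satisfied by $U_{\alpha,\eps}$, the right side equals $\int_{\R^N}|x|^{-\alpha}U_{\alpha,\eps}^{p^*_\alpha-1}\,(g_{\eps,\delta}\circ U_{\alpha,\eps})\,dx$. Now on $\{U_{\alpha,\eps}\ge U_{\alpha,\eps}(\delta)\}=B_\delta$ one has $g_{\eps,\delta}(U_{\alpha,\eps})=U_{\alpha,\eps}+$(bounded error), so the main term is $\int_{B_\delta}|x|^{-\alpha}U_{\alpha,\eps}^{p^*_\alpha}\le S_\alpha^{(N-\alpha)/(ps-\alpha)}$ by \eqref{eqrn}, while on the annulus $\delta\le|x|\le\theta\delta$ one bounds $g_{\eps,\delta}(U_{\alpha,\eps})\le m_{\eps,\delta}^{p-1}U_{\alpha,\eps}\le 2^{p-1}U_{\alpha,\eps}$ and estimates $\int_{B_{\theta\delta}\setminus B_\delta}|x|^{-\alpha}U_{\alpha,\eps}^{p^*_\alpha}\,dx$; after rescaling to $t=\eps/\delta$ and invoking $U_\alpha(r)\le c_2 r^{-(N-ps)/(p-1)}$ for $r\ge1$, a direct integration on the annulus of radii $1/t\le|x|\le\theta/t$ produces the stated $C\,t^{(N-ps)/(p-1)}$ loss. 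The error coming from replacing $g_{\eps,\delta}(U_{\alpha,\eps})$ by $U_{\alpha,\eps}$ in $B_\delta$, which is $|U_{\alpha,\eps}(\delta)(m_{\eps,\delta}^{p-1}-1)|\int_{B_\delta}|x|^{-\alpha}U_{\alpha,\eps}^{p^*_\alpha-1}$, is of the same or smaller order and is absorbed.

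For \eqref{estqaf} one writes $\int_{\R^N}|x|^{-\alpha}u_{\alpha,\eps,\delta}^{p^*_\alpha}\ge \int_{B_\delta}|x|^{-\alpha}U_{\alpha,\eps}^{p^*_\alpha}=\int_{\R^N}|x|^{-\alpha}U_{\alpha,\eps}^{p^*_\alpha}-\int_{B_\delta^c}|x|^{-\alpha}U_{\alpha,\eps}^{p^*_\alpha}=S_\alpha^{(N-\alpha)/(ps-\alpha)}-\int_{B_\delta^c}|x|^{-\alpha}U_{\alpha,\eps}^{p^*_\alpha}$; rescaling and using the decay bound, $\int_{B_{1/t}^c}|x|^{-\alpha}U_\alpha^{p^*_\alpha}\,dx\le C\int_{1/t}^\infty r^{-\alpha}r^{-\frac{N-ps}{p-1}p^*_\alpha}r^{N-1}\,dr$, and since $\frac{N-ps}{p-1}p^*_\alpha-(N-\alpha)=\frac{N-\alpha}{p-1}>0$ the integral is $C\,t^{(N-\alpha)/(p-1)}$, giving the claim. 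Finally \eqref{estqal} follows from $\int u_{\alpha,\eps,\delta}^\beta\,dx\ge \int_{B_\delta}U_{\alpha,\eps}^\beta\,dx$; after rescaling to $t=\eps/\delta$ via $\int u_{\alpha,\eps,\delta}^\beta = \delta^{N-\frac{N-ps}{p}\beta}\int v_t^\beta$ and the change of variable $x=\eps y$, $\int_{B_\delta}U_{\alpha,\eps}^\beta\,dx=\eps^{N-\frac{N-ps}{p}\beta}\int_{B_{1/t}}U_\alpha(|y|)^\beta\,dy$, and one splits $B_{1/t}=B_1\cup(B_{1/t}\setminus B_1)$: on $B_1$ the integral contributes a fixed positive constant, while on the annulus $U_\alpha(r)\sim r^{-(N-ps)/(p-1)}$ by Lemma~\ref{leesti} so $\int_1^{1/t}r^{N-1-\frac{N-ps}{p-1}\beta}\,dr$ is comparable to $1$, to $|\log t|$, or to $t^{-(N-\frac{N-ps}{p-1}\beta)}$ according as $N-1-\frac{N-ps}{p-1}\beta$ is $<-1$, $=-1$ (i.e.\ $\beta=\frac{p^*}{p'}$ since $N-\frac{N-ps}{p-1}\beta=0$ means $\beta=\frac{N(p-1)}{N-ps}=\frac{p^*}{p'}$), or $>-1$; combining with the lower bound $U_\alpha(r)\ge c_1 r^{-(N-ps)/(p-1)}$ to make the estimate two-sided on the dominant regime, multiplying back by $\eps^{N-\frac{N-ps}{p}\beta}$ and simplifying the powers of $\eps$ and $\delta$ yields the three cases exactly as stated.

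The main obstacle is the bookkeeping in \eqref{estqa}: one must carefully track the three contributions (the main term from $B_\delta$, the annular term, and the error from the modification of $g_{\eps,\delta}$ near $U_{\alpha,\eps}(\delta)$) and verify that the annular term—which is the genuinely new point compared to \cite{mpsy} because of the weight $|x|^{-\alpha}$—indeed has exponent $(N-ps)/(p-1)$ and not something worse; this hinges on the sharp upper decay $U_\alpha(r)\le c_2 r^{-(N-ps)/(p-1)}$ of Lemma~\ref{leesti} together with $m_{\eps,\delta}\le 2$. The remaining estimates are comparatively routine once the correct rescaling to the single parameter $\eps/\delta$ is set up.
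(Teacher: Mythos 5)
Your proof is correct and follows essentially the same route as the paper: both test \eqref{eqrneq} with $g_{\eps,\delta}(U_{\alpha,\eps})$ and invoke the composition inequality of \cite[Lemma A.2]{bp} for \eqref{estqa}, and both obtain \eqref{estqaf} and \eqref{estqal} by restricting to $B_\delta$, rescaling, and integrating the decay bounds of Lemma~\ref{leesti}. The only (harmless) cosmetic differences are your preliminary scaling reduction to $\delta=1$ and your region-by-region bookkeeping of the error in \eqref{estqa}, where the paper instead uses the single uniform bound $g_{\eps,\delta}(t)-t\le U_{\alpha,\eps}(\delta)\,m_{\eps,\delta}^{p-1}$ together with $\int_{\R^N}|x|^{-\alpha}U_{\alpha,\eps}^{p^*_\alpha-1}\,dx=\O(\eps^{\frac{N-ps}{p}})$; both yield the same exponent $\left(\eps/\delta\right)^{\frac{N-ps}{p-1}}$.
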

\begin{proof}
The assertions follow as in the proof of  \cite[Lemma 2.7]{mpsy}. For the readers' convenience, we prove it here. Testing \eqref{eqrneq} with $g_{\eps,\delta}(U_{\alpha,\eps})\in W_{0}^{s,p}(\Omega)$ and using \cite[Lemma A.2]{bp} jointly with \eqref{eqrn} yields
\begin{align*}
[G_{\eps,\delta}(U_{\alpha,\eps})]_{s,p}^{p} \leq& \int_{\R^{2N}}
\frac{|U_{\alpha,\eps}(x)-U_{\alpha,\eps}(y)|^{p-2}\left(U_{\alpha,\eps}(x)-U_{\alpha,\eps}(y)\right)\left(g_{\eps,\delta}(U_{\alpha,\eps}(x))-g_{\eps,\delta}(U_{\alpha,\eps}(y))\right)}
{|x-y|^{N+ps}}\, dx\, dy\nonumber\\
 =&S_\alpha^{\frac{N-\alpha}{ps-\alpha}}+\int_{\mathbb{R}^{N}}\left[g_{\eps,\delta}(U_{\alpha,\eps})-U_{\alpha,\eps}\right]
\frac{U_{\alpha,\eps}^{p_{\alpha}^{\ast}-1}}{|x|^{\alpha}}\, dx.
\end{align*}
Moreover, by Lemma \ref{leesti}, it holds that
\begin{align}
g_{\eps,\delta}(t)-t &\leq U_{\alpha,\eps}(\delta)m_{\eps,\delta}^{p-1}
=\frac{1}{\eps^{\frac{N-ps}{p}}}U_\alpha\left(\frac{\delta}{\eps}\right)\Big[ 1-\frac{U_\alpha(\frac{\theta\delta}{\eps})}{U_\alpha\left(\frac{\delta}{\eps}\right)}\Big]^{-(p-1)}
 \leq 2^{p-1}c_2\frac{\eps^{\frac{N-ps}{p(p-1)}}}{\delta^{\frac{N-ps}{p-1}}}, \notag
\end{align}
and
\begin{align*}
\int_{\mathbb{R}^{N}} \frac{U_{\alpha,\eps}^{p_{\alpha}^{\ast}-1}}{|x|^{\alpha}}dx
 =&\eps^{\frac{N-ps}{p}}\int_{\mathbb{R}^{N}} \frac{U_\alpha^{p_{\alpha}^{\ast}-1}}{|z|^{\alpha}}dz=\O(\eps^{\frac{N-ps}{p}} ).
\end{align*}
This gives estimate \eqref{estqa}. On the other hand,
\begin{align}\label{wjc3}
\int_{\mathbb{R}^{N}}\frac{u_{\alpha,\eps,\delta}^{p_{\alpha}^{\ast}}}{|x|^{\alpha}}\, dx \geq \int_{B_{\delta}(0)}\frac{U_{\alpha,\eps}^{p_{\alpha}^{\ast}}}{|x|^{\alpha}}\, dx
 =S_\alpha^{\frac{N-\alpha}{ps-\alpha}}-\int_{\mathbb{R}^{N}\setminus B_{\frac{\delta}{\eps}}(0)}\frac{U_\alpha^{p_{\alpha}^{\ast}}}{|z|^{\alpha}}\, dz.
\end{align}
Since $\eps\leq\frac{\delta}{2}$, a simple calculation using Lemma \ref{leesti} yields that
\begin{align}\label{wjc4}
\int_{\mathbb{R}^{N}\setminus B_{\frac{\delta}{\eps}}(0)}\frac{U_\alpha^{p_{\alpha}^{\ast}}}{|z|^{\alpha}}\, dz\leq C\int_{\delta/\eps}^{+\infty} r^{N-1-\alpha}r^{p^*_\alpha\frac{N-ps}{p-1}}\, dr=C\int_{\delta/\eps}^{+\infty} r^{1-\frac{N-\alpha}{p-1}}\, dr=\O\Big(\Big(\frac{\eps}{\delta}\Big)^{\frac{N-\alpha}{p-1}}\Big).
\end{align}
Then \eqref{wjc3} and \eqref{wjc4} yield estimate \eqref{estqaf}.
Finally, we have
\[
\int_{\mathbb{R}^{N}}u_{\alpha,\eps,\delta}^{\beta}\, dx  \geq
\int_{B_{\delta}(0)}U_{\alpha,\eps}^{\beta}\, dx
 = \eps^{N-\frac{N-ps}{p}\beta}\int_{B_{\frac{\delta}{\eps}}(0)}U_\alpha^{\beta}\, dz\geq C\eps^{N-\frac{N-ps}{p}\beta}\int_{1}^{\frac{\delta}{\eps}}r^{N-\frac{N-ps}{p-1}\beta-1}\, dr
\]
and an explicit calculation provides \eqref{estqal}.
\end{proof}

We also have the following estimate.

\begin{lemma}[Norm estimates II]
	\label{lebn2}
  For any $\beta>0$
	and $0<2\eps\leq \delta< \theta^{-1}\delta_\Omega$, we have
	\begin{equation}
	\label{setqab}
	\int_\Omega u_{\alpha,\eps,\delta}^{\beta}dx\leq C_\beta
	\left\{ \arraycolsep=1.5pt
 \begin{array}{ll}
 \eps^{N-\frac{N-ps}{p}\beta}|\log\frac{\eps}{\delta}|,\ \  \ &
 {\rm if}\ \beta=\frac{p^*}{p'},\\[2mm]
 \eps^{\frac{N-ps}{p(p-1)}\beta}\delta^{N-\frac{N-ps}{p-1}\beta},\ \  \ &
 {\rm if}\ \beta<\frac{p^*}{p'},\\[2mm]
 \eps^{N-\frac{N-ps}{p}\beta},\ \  \ &
 {\rm if}\ \beta>\frac{p^*}{p'}.
 \end{array}
 \right.
	\end{equation}
\end{lemma}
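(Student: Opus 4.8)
The plan is to compare $u_{\alpha,\eps,\delta}$ with the untruncated bubble $U_{\alpha,\eps}$, rescale to the fixed profile $U_\alpha$, and then split the integration domain at the radius $r=1$ beyond which the decay estimate of Lemma~\ref{leesti} is available. This is essentially the one-sided companion of the lower bound \eqref{estqal}, carried out from above.

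First I would record the elementary pointwise inequality $G_{\eps,\delta}(t)\le t$ for every $t\ge 0$. This is clear on $\{t\le U_{\alpha,\eps}(\theta\delta)\}$, where $G_{\eps,\delta}=0$, and on $\{t\ge U_{\alpha,\eps}(\delta)\}$, where $G_{\eps,\delta}(t)=t$; on the intermediate interval $G_{\eps,\delta}$ is affine, vanishes at the left endpoint and — exactly because of the choice of $m_{\eps,\delta}$ — equals $U_{\alpha,\eps}(\delta)$ at the right endpoint, hence stays below the diagonal. Therefore $u_{\alpha,\eps,\delta}=G_{\eps,\delta}(U_{\alpha,\eps})\le U_{\alpha,\eps}$ pointwise, and since $u_{\alpha,\eps,\delta}$ is supported in $B_{\theta\delta}(0)$, the change of variables $x=\eps z$ together with \eqref{Du} gives
\[
\int_\Omega u_{\alpha,\eps,\delta}^{\beta}\,dx\le \int_{B_{\theta\delta}(0)}U_{\alpha,\eps}^{\beta}\,dx=\eps^{N-\frac{N-ps}{p}\beta}\int_{B_{\theta\delta/\eps}(0)}U_\alpha^{\beta}\,dz,
\]
where $\theta\delta/\eps\ge 2\theta>1$ by the hypothesis $2\eps\le\delta$.

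Next I would split the $z$-integral over $B_1(0)$ and over the annulus $B_{\theta\delta/\eps}(0)\setminus B_1(0)$. On $B_1(0)$ the integrand is bounded by $\|U_\alpha\|_{L^\infty}^{\beta}$, a finite constant by Proposition~\ref{extral}(i) and radial monotonicity, so that contribution is $\O(1)$. On the annulus the decay bound $U_\alpha(r)\le c_2\, r^{-\frac{N-ps}{p-1}}$ of Lemma~\ref{leesti} reduces matters to the elementary radial integral $\int_1^{\theta\delta/\eps}r^{N-1-\beta\frac{N-ps}{p-1}}\,dr$. Since the exponent $N-\beta\frac{N-ps}{p-1}$ vanishes precisely when $\beta=p^*/p'$, this integral equals (up to constants) $\log(\theta\delta/\eps)$ in the borderline case, is bounded when $\beta>p^*/p'$, and is of order $(\theta\delta/\eps)^{N-\beta\frac{N-ps}{p-1}}$ when $\beta<p^*/p'$. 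Multiplying back by $\eps^{N-\frac{N-ps}{p}\beta}$ and tidying up — in the borderline case absorbing $\log\theta$ and the $B_1$ constant into $|\log(\eps/\delta)|$ (legitimate since $\delta/\eps\ge 2$), in the subcritical case noting that the annulus term dominates the $B_1$ constant because the exponent is positive and $\theta\delta/\eps\ge 2\theta$, and then using
\[
N-\tfrac{N-ps}{p}\beta-\Big(N-\beta\tfrac{N-ps}{p-1}\Big)=\beta\,\tfrac{N-ps}{p(p-1)}
\]
to convert the $(\eps,\delta/\eps)$ powers into the stated $\eps^{\frac{N-ps}{p(p-1)}\beta}\delta^{N-\frac{N-ps}{p-1}\beta}$, and in the case $\beta>p^*/p'$ simply retaining the prefactor — yields exactly the three regimes of \eqref{setqab}.

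I do not expect any genuine obstacle here: the argument is a routine radial computation, and the only points requiring a little care are the verification of the pointwise inequality $u_{\alpha,\eps,\delta}\le U_{\alpha,\eps}$ and the exponent bookkeeping that turns the $(\eps,\delta/\eps)$ powers into $(\eps,\delta)$ powers in the subcritical regime $\beta<p^*/p'$.
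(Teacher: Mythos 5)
Your proposal is correct and follows essentially the same route as the paper: bound $u_{\alpha,\eps,\delta}$ pointwise by (a constant times) $U_{\alpha,\eps}\chi_{B_{\theta\delta}}$, rescale to $U_\alpha$, split at radius $1$, and evaluate the radial integral using the decay of Lemma~\ref{leesti}, with the three regimes coming from the sign of $N-\beta\frac{N-ps}{p-1}$. The only cosmetic difference is that you derive the cleaner bound $G_{\eps,\delta}(t)\le t$ by affinity, whereas the paper uses $u_{\alpha,\eps,\delta}\le m_{\eps,\delta}U_{\alpha,\eps}\le 2U_{\alpha,\eps}$; both are fine.
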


\begin{proof}
	From the definition of $u_{\alpha,\eps,\delta}$,
	we have
\[
	u_{\alpha,\eps,\delta}(x)
	=
	\left\{ \arraycolsep=1.5pt
	\begin{array}{lll}
	0,\ \  \ &
	{\rm if}\ |x|\geq \theta \delta \\[2mm]
	m_{\eps,\delta} (U_{\alpha,\eps}(x)-U_{\alpha,\eps}(\theta\delta)),\ \  \ &
	{\rm if}\   \delta\leq |x|\leq \theta\delta \\[2mm]
	U_{\alpha,\eps}(x) , \ \ \quad & {\rm if}\ |x|\leq \delta
	\end{array}
	\right. \quad \leq  \left\{ \arraycolsep=1.5pt
	\begin{array}{lll}
	0,\ \  \ &
	{\rm if}\ |x|\geq \theta \delta  \\[2mm]
	m_{\eps,\delta} U_{\alpha,\eps}(x) \ \  \ &
	{\rm if}\   \delta\leq |x|\leq \theta\delta \\[2mm]
	U_{\alpha,\eps}(x) , \ \ \quad & {\rm if}\ |x|\leq \delta.
	\end{array}
	\right.
\]
	Recall that $m_{\eps,\delta}\leq  2$, therefore it holds $u_{\alpha,\eps,\delta}\leq 2U_{\alpha,\eps}\chi_{B_{\theta\delta}}$. Taking into account that $U_\alpha\in L^\infty(\R^N)$, we have
	\[
	\begin{split}
	\int_\Omega u_{\alpha,\eps,\delta}^\beta\, dx&\leq C \int_{B_{\theta\delta}}  U^\beta_{\alpha,\eps}\, dx=C\eps^{N-\frac{ N-ps  }{p}\beta}\int_{B_{\frac{\theta\delta}{\eps}}}U^\beta_\alpha\, dy \nonumber\\
	&\leq C\eps^{N-\frac{ N-ps  }{p}\beta}\int_{B_1}U_\alpha^\beta\, dy+C\eps^{N-\frac{ N-ps  }{p}\beta}\int_1^{ \frac{\theta\delta}{\eps}}r^{N-\frac{N-ps}{p-1}\beta-1} \, dt\nonumber\\
	&\leq C\|U_\alpha\|_\infty\eps^{N-\frac{ N-ps  }{p}\beta} +C
	\left\{ \arraycolsep=1.5pt
 \begin{array}{ll}
 \eps^{N-\frac{N-ps}{p}\beta}|\log\frac{\eps}{\delta}|,\ \  \ &
 {\rm if}\ \beta=\frac{p^*}{p'},\\[2mm]
 \eps^{\frac{N-ps}{p(p-1)}\beta}\delta^{N-\frac{N-ps}{p-1}\beta},\ \  \ &
 {\rm if}\ \beta<\frac{p^*}{p'},\\[2mm]
 \eps^{N-\frac{N-ps}{p}\beta},\ \  \ &
 {\rm if}\ \beta>\frac{p^*}{p'}.
 \end{array}
 \right.
\end{split}
\]
and in the range $\delta>2\eps$ the last term is always greater than the first, giving \eqref{setqab}.
\end{proof}

\subsection{Compactness results}\label{compactness}

\noindent
We first recall the following
\begin{definition}\label{defps}
Let $c\in\mathbb{R}$, $E$ be a Banach space and $J\in  C^1(E,\mathbb{R})$.

(i) $\{u_k\}_{k\in{\mathbb N}}\subset E$ is a $(PS)_c$-sequence for $J$ if $J(u_k)=c+o_k(1)$ and $J'(u_k)=o_k(1)$ in $E^\ast$.

(ii) $J$ satisfies the $(PS)_c$-condition if any $(PS)_c$-sequence
for $J$ has a convergent subsequence.
\end{definition}

\noindent
We will need a slight modification of the classical Br\'ezis-Lieb Lemma.

\begin{lemma}[Br\'ezis-Lieb]
	\label{blle}
Let $\{q_k\}_k$ be such that $q_k\geq 1$, $q_k\to q$ and let
$\{f_k\}_{k}\subset L^{q_k}(\mathbb{R}^m)$ be a bounded sequence such that $f_k\to f$  almost everywhere.
Then
$$
\|f_k\|_{L^{q_k}(\mathbb{R}^m)}^{q_k}-\|f_k-f\|_{L^{q_k}(\mathbb{R}^m)}^{q_k}
=\|f\|_{L^q(\mathbb{R}^m)}^q+o_k(1).
$$
\end{lemma}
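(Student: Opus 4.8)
The statement is a variant of the Brézis–Lieb lemma in which the exponents $q_k$ are allowed to vary, converging to $q$. The plan is to reduce the problem to the standard Brézis–Lieb lemma by controlling the extra error coming from the variation of the exponent. First I would fix $\eta>0$ and recall the elementary inequality: for each $\eta>0$ there is a constant $C_\eta>0$ such that for all $a,b\in\R$ and all exponents $t$ in a fixed bounded interval,
\[
\bigl|\,|a+b|^{t}-|a|^{t}\,\bigr|\leq \eta\,|a|^{t}+C_\eta\,|b|^{t}.
\]
This is proved exactly as in the classical case, using homogeneity and continuity in $t$ on the compact set where $t$ ranges (here $t\in[1,\sup_k q_k]$, which is bounded since $q_k\to q$). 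Applying this with $a=f_k-f$, $b=f$, $t=q_k$, the sequence
\[
W_{k,\eta}:=\Bigl(\bigl|\,|f_k|^{q_k}-|f_k-f|^{q_k}\bigr|-\eta\,|f_k-f|^{q_k}\Bigr)^{+}
\]
satisfies $0\leq W_{k,\eta}\leq C_\eta\,|f|^{q_k}$ pointwise, and $W_{k,\eta}\to 0$ a.e.\ (using $f_k\to f$ a.e.\ and $q_k\to q$).

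**Passing to the limit.** Since $|f|^{q_k}\to|f|^{q}$ a.e.\ and is dominated in a suitable sense — one splits $\R^m$ into $\{|f|\le1\}$, where $|f|^{q_k}\le 1+|f|^{\min_k q_k}$, and $\{|f|>1\}$, where $|f|^{q_k}\le 1+|f|^{\max_k q_k}$, both integrable because $f\in L^{q}\cap L^{q_k}$ for all $k$ by the boundedness of $\{f_k\}$ in $L^{q_k}$ together with Fatou — dominated convergence gives $\int_{\R^m}|f|^{q_k}\to\int_{\R^m}|f|^{q}$. A generalized dominated convergence argument (Fatou applied to $C_\eta|f|^{q_k}\pm W_{k,\eta}$, or simply the version of dominated convergence with a variable but convergent dominating sequence) then yields $\int_{\R^m}W_{k,\eta}\to0$ as $k\to\infty$, for each fixed $\eta$. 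Consequently
\[
\limsup_{k}\int_{\R^m}\bigl|\,|f_k|^{q_k}-|f_k-f|^{q_k}-|f|^{q_k}\,\bigr|\,dx
\;\le\; \eta\,\limsup_k\int_{\R^m}|f_k-f|^{q_k}\,dx\;\le\; \eta\,M,
\]
where $M<\infty$ is a bound for $\|f_k-f\|_{L^{q_k}}^{q_k}$, which holds because $\{f_k\}$ is bounded in $L^{q_k}$ and $\|f\|_{L^{q_k}}^{q_k}$ is bounded by the convergence just established. Letting $\eta\downarrow0$ shows
\[
\int_{\R^m}|f_k|^{q_k}\,dx-\int_{\R^m}|f_k-f|^{q_k}\,dx-\int_{\R^m}|f|^{q_k}\,dx=o_k(1),
\]
and combining with $\int_{\R^m}|f|^{q_k}\to\int_{\R^m}|f|^{q}$ gives the claim.

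**Main obstacle.** The only genuinely non-routine point is the justification of the dominated-convergence steps with a \emph{varying} exponent: one must ensure that $|f|^{q_k}$ has a fixed integrable majorant, which is where the splitting of $\R^m$ according to $\{|f|\le1\}$ and $\{|f|>1\}$ is needed, and one must know a priori that $f\in L^{q_k}(\R^m)$ for all large $k$ — this follows from Fatou's lemma applied to $|f_k|^{q_k}\to|f|^{q}$ together with the hypothesis $\sup_k\|f_k\|_{L^{q_k}}<\infty$. Everything else is a direct transcription of the classical Brézis–Lieb argument, with the pointwise inequality stated above replacing its constant-exponent analogue.
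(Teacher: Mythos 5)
Your approach is the same as the paper's: the one genuinely new ingredient is the elementary inequality $\bigl||a+b|^{q_k}-|a|^{q_k}\bigr|\le\eta|a|^{q_k}+C_\eta|b|^{q_k}$ with $C_\eta$ uniform in $k$ for $k$ large (the paper proves exactly this in a footnote), after which one runs the classical Br\'ezis--Lieb scheme. You go further than the paper, which merely asserts that ``the classical proof carries over'', by attempting to justify the dominated--convergence step with the \emph{varying} majorant $C_\eta|f|^{q_k}$ --- and that is precisely where your written argument has a gap. (A minor slip first: your $W_{k,\eta}$ is missing the term $-|f|^{q_k}$ inside the absolute value; as defined it converges a.e.\ to $|f|^q$, not to $0$. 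Your later displays use the correct quantity, so I read this as a typo.)

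The gap is in the claim that $f\in L^{q_k}(\R^m)$ for all $k$ and that $\int_{\R^m}|f|^{q_k}\,dx\to\int_{\R^m}|f|^{q}\,dx$. Fatou applied to $|f_k|^{q_k}\to|f|^q$ only yields $f\in L^q(\R^m)$; it gives no information about $L^{q_k}$ for $q_k\neq q$. Moreover your majorants are not integrable on $\R^m$: the constant $1$ is not integrable on the (possibly infinite--measure) set $\{|f|\le1\}$; $|f|^{\min_k q_k}$ need not be integrable there if $\min_k q_k<q$ and $f$ has a slowly decaying tail (e.g.\ $f(x)=|x|^{-m/q}(\log|x|)^{-2/q}$ for $|x|\ge2$ lies in $L^q$ but in no $L^{q-\delta}$); and $|f|^{\max_k q_k}$ need not be integrable on $\{|f|>1\}$ if $\max_k q_k>q$ and $f$ has a borderline local singularity. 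In such cases $\int_{\R^m}|f|^{q_k}\,dx=+\infty$ for every $k$ and the intermediate convergence fails outright (indeed, taking suitable truncations of such an $f$ as $f_k$ one sees that the lemma as stated on all of $\R^m$ is itself delicate). The issue evaporates in every application made in the paper, where the underlying measure is finite ($\Omega$ bounded, weight $|x|^{-\alpha}$ with $\alpha<N$) and the $f_k$ are uniformly bounded in the top space $L^{p^*_\alpha}$ with $q_k\le p^*_\alpha$, so that $|f|^{q_k}\le 1+|f|^{p^*_\alpha}$ is an honest integrable dominant. So: same method as the paper, and you correctly isolate the non--routine point, but the domination step needs either these additional (and in practice available) hypotheses or a different justification.
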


\begin{proof}
Observe that the elementary inequality
\[
||a+b|^{q_k}-|a|^{q_k}|\leq \eps|a|^{q_k}+C_\eps|b|^{q_k}
\]
holds true  with a constant $C_\eps$ independent of $k$ for sufficiently large $k$\footnote{The inequality is trivial for $a=0$ and  dividing by $|a|\neq 0$ reduces to $h(t):=||1+t|^r-1|\leq \eps +C_\eps |t|^r$ for $r=q_k$. From $|1+t|^r\leq 2^r(1+|t|^r)$, we deduce $h(t)\leq 2^r+1+2^r|t|^r$ and the claim for $|t|\geq 1$ and $C_\eps\geq 2^{r+2}$. For $|t|\leq 1$, ${\rm Lip}(h)\leq r$, so that $h(t)\leq r |t|$ and by Young's inequality $r|t|\leq \eps+\frac{(r-1)^{r}}{r\eps^r} |t|^r$. In both cases $C_\eps$ is bounded if $r\geq 1$ is.}.  Then the classical proof  of  \cite[Theorem 1]{bl} carry over yielding the result.
\end{proof}

\noindent
\begin{lemma}[Convergences]
	\label{colema1}
Let $\{u_k\}_{k}\subset W^{s,p}_0(\Omega)$ be bounded and let $\{q_k\}_{k\in\mathbb{N}}$ be a sequence such that $p<q_k\leq p^*_\alpha$
and $q_k\to p^*_\alpha$ as $k\to\infty$. Then, up to a subsequence, we have
\begin{enumerate}
  \item $[u_k-u]_{s,p}^p=[u_k]_{s,p}^p-[u]_{s,p}^p+o_k(1)$.
  \item we have
  $$
  \int_\Omega\frac{|u_k-u|^{q_k}}{|x|^\alpha}\, dx= \int_\Omega\frac{|u_k|^{q_k}}{|x|^\alpha}\, dx- \int_\Omega\frac{|u|^{p^*_\alpha}}{|x|^\alpha}\, dx+o_k(1).
  $$
  \item for any $\varphi\in W^{s,p}_0(\Omega)$, we have
  $$
  \int_\Omega\frac{|u_k|^{q_k-2}u_k}{|x|^{\alpha}}\varphi \,dx= \int_\Omega\frac{|u|^{p^*_\alpha-2}u}{|x|^{\alpha}}\varphi \,dx+o_k(1).
  $$
\end{enumerate}
\end{lemma}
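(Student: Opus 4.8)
The three assertions are all instances of a Brézis--Lieb type splitting under a weakly convergent sequence with a varying exponent, so the plan is to extract a single subsequence along which all the good properties hold simultaneously, and then treat each item separately.

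First I would fix notation: since $\{u_k\}$ is bounded in $W^{s,p}_0(\Omega)$, by reflexivity and the compact embeddings recalled in Section~2.1 we may pass to a subsequence so that $u_k \weakto u$ in $W^{s,p}_0(\Omega)$, $u_k\to u$ in $L^\sigma(\Omega)$ for every $\sigma<p^*$, and $u_k(x)\to u(x)$ for a.e.\ $x$; moreover $u_k/|x|^{s}\to u/|x|^{s}$ in $L^p(\Omega)$ along a further subsequence (this follows as in the proof of Lemma~\ref{sob-func}, interpolating the $L^p(dx/|x|^{ps})$ bound against $L^\sigma$ with $\sigma<p^*$). Item (1) is then exactly the standard fractional Brézis--Lieb identity for the Gagliardo seminorm: writing $w_k(x,y)=(u_k(x)-u_k(y))|x-y|^{-(N+ps)/p}$, the sequence $\{w_k\}$ is bounded in $L^p(\R^{2N})$ and converges a.e.\ to $w(x,y)=(u(x)-u(y))|x-y|^{-(N+ps)/p}$, so the classical Brézis--Lieb lemma (here with the fixed exponent $p$) gives $\|w_k\|_p^p-\|w_k-w\|_p^p=\|w\|_p^p+o_k(1)$, which is precisely (1).

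For item (2) I would apply the variable-exponent Brézis--Lieb Lemma~\ref{blle} on the measure space $(\Omega, dx/|x|^\alpha)$, with $m=N$, $f_k=u_k$, $f=u$, and $q_k\to p^*_\alpha$. The hypotheses are met: $\{u_k\}$ is bounded in $L^{q_k}(\Omega,dx/|x|^\alpha)$ uniformly in $k$ because $q_k\le p^*_\alpha$ and the Hardy--Sobolev inequality of Lemma~\ref{sobhardy} bounds $\|u_k\|_{L^{p^*_\alpha}(dx/|x|^\alpha)}$ by $C[u_k]_{s,p}\le C$ (together with finiteness of $\int_\Omega |x|^{-\alpha}\,dx$ to interpolate down to the smaller exponents $q_k$), and $u_k\to u$ a.e. Lemma~\ref{blle} then yields
\[
\int_\Omega\frac{|u_k|^{q_k}}{|x|^\alpha}\,dx-\int_\Omega\frac{|u_k-u|^{q_k}}{|x|^\alpha}\,dx=\int_\Omega\frac{|u|^{p^*_\alpha}}{|x|^\alpha}\,dx+o_k(1),
\]
which rearranges to (2). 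I should note that $\int_\Omega|x|^{-\alpha}\,dx<\infty$ since $\alpha\le ps<N$, so the ambient measure is finite and the elementary inequality in the footnote of Lemma~\ref{blle} can be integrated with a $k$-uniform constant.

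For item (3) the idea is a variable-exponent analogue of the ``weak convergence of the nonlinearity'' argument used in Lemma~\ref{sob-func}. Set $h_k:=|u_k|^{q_k-2}u_k$. Then $\|h_k\|_{L^{q_k'}(\Omega,dx/|x|^\alpha)}^{q_k'}=\int_\Omega |u_k|^{q_k}|x|^{-\alpha}\,dx$ is bounded, $q_k'\to (p^*_\alpha)'$, and $h_k\to |u|^{p^*_\alpha-2}u$ a.e.; one then argues that testing against a fixed $\varphi\in W^{s,p}_0(\Omega)\subset L^{p^*_\alpha}(\Omega,dx/|x|^\alpha)$ passes to the limit. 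The cleanest route is to write $h_k=(h_k-|u_k|^{p^*_\alpha-2}u_k)+|u_k|^{p^*_\alpha-2}u_k$: the second piece converges weakly in $L^{(p^*_\alpha)'}(\Omega,dx/|x|^\alpha)$ to $|u|^{p^*_\alpha-2}u$ by the a.e.\ convergence plus boundedness (as in Lemma~\ref{sob-func}), so it suffices to show the first piece is $o_k(1)$ when integrated against $\varphi\,|x|^{-\alpha}$. For the first piece one uses that $|t|^{q_k-2}t-|t|^{p^*_\alpha-2}t\to 0$ and an equi-integrability / dominated-convergence argument: pointwise, $\big||u_k|^{q_k-2}u_k-|u_k|^{p^*_\alpha-2}u_k\big|\le |u_k|^{q_k-1}+|u_k|^{p^*_\alpha-1}$, both terms bounded in the relevant dual Lebesgue space, so Hölder against $\varphi$ and a splitting of $\Omega$ into the region where $|u_k|$ is large (small measure in $dx/|x|^\alpha$, uniformly, by the uniform $L^{p^*_\alpha}$-bound) and the region where it is bounded (on which $|t|^{q_k-1}-|t|^{p^*_\alpha-1}\to0$ uniformly, and $u_k\to u$ in measure) gives the claim.

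The main obstacle is item (3): keeping all the estimates uniform as the exponent $q_k$ moves, in particular justifying the passage to the limit in $\int_\Omega |u_k|^{q_k-2}u_k\,\varphi\,|x|^{-\alpha}\,dx$ without an a priori strong convergence of $\{u_k\}$ in $L^{p^*_\alpha}(dx/|x|^\alpha)$. The resolution is precisely the splitting above: weak convergence of $|u_k|^{p^*_\alpha-2}u_k$ handles the ``fixed exponent'' part, and a uniform tail/equi-integrability bound coming from the Hardy--Sobolev inequality (and the finiteness of the singular measure near the origin) handles the error introduced by the varying exponent. Items (1) and (2) are then routine applications of, respectively, the classical and the modified (Lemma~\ref{blle}) Brézis--Lieb lemma.
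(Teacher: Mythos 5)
Your proof is correct, and for items (1) and (2) it coincides with the paper's: the same subsequence extraction, the classical Br\'ezis--Lieb identity applied to $(u_k(x)-u_k(y))|x-y|^{-(N+ps)/p}$ for (1), and Lemma~\ref{blle} on $(\Omega,dx/|x|^\alpha)$ for (2), with the finiteness of $\int_\Omega|x|^{-\alpha}dx$ (from $\alpha\le ps<N$) used exactly as you indicate to absorb the gap between $q_k$ and $p^*_\alpha$.

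For item (3) you take a genuinely longer route than the paper. You freeze the exponent by writing $|u_k|^{q_k-2}u_k=\bigl(|u_k|^{q_k-2}u_k-|u_k|^{p^*_\alpha-2}u_k\bigr)+|u_k|^{p^*_\alpha-2}u_k$, handle the fixed-exponent piece by the standard ``bounded $+$ a.e.\ convergence $\Rightarrow$ weak convergence'' argument, and kill the error term by splitting $\Omega$ according to the size of $|u_k|$. That error estimate does work, but only because the smallness on $\{|u_k|\ge M\}$ is charged to the \emph{fixed} test function $\varphi$ (absolute continuity of $\int|\varphi|^{p^*_\alpha}|x|^{-\alpha}dx$ over sets of small $dx/|x|^\alpha$-measure), not to $|u_k|^{p^*_\alpha}/|x|^\alpha$, which is not equi-integrable in general; your phrasing leaves this slightly ambiguous and is the one point worth making explicit. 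The paper avoids the decomposition entirely: it applies the bounded-plus-a.e.\ argument directly to the variable-exponent sequence $w_k=|u_k|^{q_k-2}u_k|x|^{-\alpha/(p^*_\alpha)'}$, the uniform bound in $L^{(p^*_\alpha)'}(\Omega)$ following from a single H\"older estimate with exponent $\frac{p^*_\alpha-1}{q_k-1}$, namely
\[
\int_\Omega|w_k|^{(p^*_\alpha)'}dx\le \left(\int_\Omega\frac{|u_k|^{p^*_\alpha}}{|x|^\alpha}\,dx\right)^{\frac{q_k-1}{p^*_\alpha-1}}\left(\int_\Omega\frac{dx}{|x|^\alpha}\right)^{1-\frac{q_k-1}{p^*_\alpha-1}}\le C,
\]
after which a.e.\ convergence of $w_k$ to $|u|^{p^*_\alpha-2}u\,|x|^{-\alpha/(p^*_\alpha)'}$ identifies the weak limit and one tests against $\varphi|x|^{-\alpha/p^*_\alpha}\in L^{p^*_\alpha}(\Omega)$. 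Both arguments are sound; the paper's is shorter, yours makes the role of the varying exponent more transparent.
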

\begin{proof}
We can assume that $u_k\to u$ weakly in $W^{s,p}_0(\Omega)$ and pointwise a.e.. By choosing
$$
f_k=\frac{u_k(x)-u_k(y)}{|x-y|^{\frac{N+s p}{p}}},\quad\ f=\frac{u(x)-u(y)}{|x-y|^{\frac{N+s p}{p}}},\quad \ q_k\equiv p,\ \ \mbox{and}\ m=2N,
$$
(1) follows from Lemma \ref{blle}. Observe that by $q_k\leq p^*_\alpha$, H\"older and Hardy-Sobolev inequality,
\[
\int_{\Omega}\frac{|u_k|^{q_k}}{|x|^\alpha}\, dx=\int_{\Omega}\frac{|u_k|^{q_k}}{|x|^{\alpha\frac{q_k}{p^*_\alpha}}}\frac{1}{|x|^{\alpha\frac{p^*_\alpha-q_k}{p^*_\alpha}}}\, dx\leq \left(\int_{\Omega}\frac{|u_k|^{p^*_\alpha}}{|x|^\alpha}\, dx\right)^{\frac{q_k}{p^*_\alpha}}\left(\int_\Omega\frac{1}{|x|^{\alpha}}\, dx\right)^{1-\frac{q_k}{p^*_\alpha}}\leq C
\]
so that $\{u_k/|x|^{\alpha/p^*_\alpha}\}_k$ is a bounded sequence in $L^{p^*_\alpha}(\Omega)$ pointwise converging to $u/|x|^{\alpha/p^*_\alpha}$, and thus Lemma \ref{blle} again gives (2).
To prove (3), we let
\[
w_k:=\frac{|u_k|^{q_k-2}u_k}{|x|^{\frac{\alpha}{(p^*_\alpha)'}}},\qquad w:=\frac{|u|^{p^*_\alpha-2}u}{|x|^{\frac{\alpha}{(p^*_\alpha)'}}},
\]
and proceed as before obtaining that
\[
\int_\Omega|w_k|^{\frac{p^*_\alpha}{p^*_\alpha-1}}\, dx \leq
\int_\Omega \frac{|u_k|^{p^*_\alpha\frac{q_k-1}{p^*_\alpha-1}}}{|x|^{\alpha\frac{q_k-1}{p^*_\alpha-1}}}
\frac{1}{|x|^{\alpha(1-\frac{q_k-1}{p^*_\alpha-1})}}\, dx
 \leq \left(\int_\Omega \frac{|u_k|^{p^*_\alpha}}{|x|^{\alpha}}dx\right)^{\frac{q_k-1}{p^*_\alpha-1}}\leq C.
\]
Therefore $\{w_k\}_k$ is bounded  in $L^{(p^*_\alpha)'}(\Omega)$ and, up to subsequence, $w_k\weakto v$ in $L^{(p^*_\alpha)'}(\Omega)$. Since $w_k\to w$ pointwise a.e., $v=w$ and (3) follows noting that $\varphi/|x|^{\frac{\alpha}{p^*_\alpha}}\in L^{p^*_\alpha}(\Omega)$ for any $\varphi\in W^{s,p}_0(\Omega)$.
\end{proof}

 We now come to the compactness properties of the energy functional $\J$.

\begin{theorem}[Palais-Smale condition]
	\label{comth}
Let  $0\leq \alpha\leq ps<N$, $q$ and $r$ satisfy $ q\in [p, p^*_\alpha]$, $ r\in [p, p^*[$, with $\max\{q, r\}>p$. Then
\begin{enumerate}
  \item If $q< p^*_\alpha$ and $r<p^*$, then for any $\lambda, \mu>0$, $\J$ satisfies $(PS)_c$ for all $c\in {\mathbb R}$.
  \item If $q=p^*_\alpha>p$ and $r<p^*$, then for any $\lambda, \mu>0$, $\J$ satisfies $(PS)_c$ for all
\[
c<  \left(\frac{1}{p}-\frac{1}{p^*_\alpha}\right)\frac{S_\alpha^{\frac{N-\alpha}{ps-\alpha}}}{\mu^{\frac{p}{p^*_\alpha-p}}},
\]
   \item If $\alpha=ps$, $q=p^*_{ps}=p$ and $p<r<p^*$, then for any $\lambda>0$ and $0<\mu<S_{ps}$, $\J$ satisfies $(PS)_c$ for all $c\in {\mathbb R}$.
\end{enumerate}
\end{theorem}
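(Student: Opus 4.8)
The plan is to prove the three cases of the Palais-Smale condition by the standard blueprint: a $(PS)_c$ sequence $\{u_k\}$ is first shown to be bounded in $W^{s,p}_0(\Omega)$; then by reflexivity and the compact embeddings recalled in Section~2, along a subsequence $u_k\weakto u$ weakly, $u_k\to u$ strongly in $L^\sigma(\Omega)$ for $\sigma<p^*$ and a.e., and $\J'(u_k)\to 0$ forces $\J'(u)=0$ by Corollary~\ref{wtwcor} together with the strong convergence of the subcritical term $|u_k|^{r-2}u_k$ in $L^{r'}$. The remaining task is to upgrade weak to strong convergence, i.e.\ to show $[u_k]_{s,p}\to[u]_{s,p}$, which by uniform convexity of $[\ \cdot\ ]_{s,p}$ and $u_k\weakto u$ gives $u_k\to u$ in norm.

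\emph{Boundedness.} From $\J(u_k)-\frac1r\langle\J'(u_k),u_k\rangle = (\frac1p-\frac1r)[u_k]_{s,p}^p + \mu(\frac1r-\frac1q)\int_\Omega\frac{|u_k|^q}{|x|^\alpha}\,dx = c+o_k(1)+o_k(1)[u_k]_{s,p}$, coercivity follows whenever $r>p$ (both coefficients nonnegative since $q\le p^*_\alpha$ and, in case (3), $q=p$ so the $\mu$-term combines with the principal part using $\mu<S_{ps}$ via \eqref{criticalfrachardyg}). When $r=p$ one instead tests against $u_k$ directly, bounding $\lambda\int|u_k|^p$ by $(\lambda/\lambda_1)[u_k]_{s,p}^p$, and uses $\lambda<\lambda_1$ together with the subcriticality/smallness hypothesis on the other term; this is the same computation as in Lemma~\ref{lemmalb}.

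\emph{Strong convergence.} Set $v_k:=u_k-u$. By Lemma~\ref{colema1}(1) (with $q_k\equiv p$) we have $[v_k]_{s,p}^p=[u_k]_{s,p}^p-[u]_{s,p}^p+o_k(1)$, and by the Brézis--Lieb Lemma and the compactness of the subcritical embedding, $\int_\Omega\frac{|u_k|^q}{|x|^\alpha}dx=\int_\Omega\frac{|v_k|^q}{|x|^\alpha}dx+\int_\Omega\frac{|u|^q}{|x|^\alpha}dx+o_k(1)$ with the $v_k$-term vanishing when $q<p^*_\alpha$, and likewise the $\lambda$-term splits with vanishing $v_k$-contribution since $r<p^*$. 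Feeding these into $\langle\J'(u_k),u_k\rangle=o_k(1)$ and subtracting $\langle\J'(u),u\rangle=0$ (here one needs $\langle(-\Delta_p)^s u_k,u_k\rangle-\langle(-\Delta_p)^s u,u\rangle=[v_k]^p+o_k(1)$, a consequence of Lemma~\ref{wtwcont} and the Brézis--Lieb splitting) yields in cases (1) and (3) simply $[v_k]_{s,p}^p=o_k(1)$, hence $u_k\to u$. In case (2), instead, we obtain $[v_k]_{s,p}^p-\mu\int_\Omega\frac{|v_k|^{p^*_\alpha}}{|x|^\alpha}dx=o_k(1)$; writing $b:=\lim[v_k]_{s,p}^p\ge 0$ and using the definition of $S_\alpha$ we get $b\ge S_\alpha(\mu b/S_\alpha \cdot\ \text{\dots})$—more precisely $[v_k]^p\ge S_\alpha\big(\int\frac{|v_k|^{p^*_\alpha}}{|x|^\alpha}\big)^{p/p^*_\alpha}$ gives $b\ge S_\alpha(b/\mu)^{p/p^*_\alpha}$, so either $b=0$ (done) or $b\ge (S_\alpha^{p^*_\alpha/(p^*_\alpha-p)})/\mu^{p/(p^*_\alpha-p)}=S_\alpha^{\frac{N-\alpha}{ps-\alpha}}/\mu^{p/(p^*_\alpha-p)}$.

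\emph{Ruling out concentration in case (2).} If $b>0$, then using the Brézis--Lieb splittings again in $\J(u_k)=c+o_k(1)$ we get $c=\J(u)+\big(\frac1p-\frac1{p^*_\alpha}\big)b\ge \big(\frac1p-\frac1{p^*_\alpha}\big)b$, because $\J(u)\ge 0$: indeed $\langle\J'(u),u\rangle=0$ together with $r\le p^*$, $q=p^*_\alpha$ gives $\J(u)=\big(\frac1p-\frac1r\big)\lambda\int|u|^r+\big(\frac1p-\frac1{p^*_\alpha}\big)\mu\int\frac{|u|^{p^*_\alpha}}{|x|^\alpha}\ge 0$ when $r\ge p$ (when $r=p$ this term is absent and the other is still nonnegative). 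Combining with the lower bound on $b$ yields $c\ge\big(\frac1p-\frac1{p^*_\alpha}\big)S_\alpha^{\frac{N-\alpha}{ps-\alpha}}/\mu^{p/(p^*_\alpha-p)}$, contradicting the hypothesis on $c$. Hence $b=0$ and $u_k\to u$.

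The main obstacle is the passage from $\langle\J'(u_k),u_k\rangle\to 0$ and weak convergence to the clean energy identity $\langle(-\Delta_p)^s u_k, u_k\rangle=[u_k-u]_{s,p}^p+\langle(-\Delta_p)^s u,u\rangle+o_k(1)$: in the local quasilinear case this requires a careful $(S_+)$-type argument, but here it is exactly the place where the nonlocal structure helps, since Lemma~\ref{wtwcont} gives weak-to-weak continuity of $(-\Delta_p)^s$ and the Brézis--Lieb splitting of Lemma~\ref{colema1}(1) applies verbatim to the Gagliardo seminorm; one must only check that the cross terms $\langle(-\Delta_p)^s u_k, u\rangle\to\langle(-\Delta_p)^s u,u\rangle$ and $\langle(-\Delta_p)^s u, u_k\rangle\to\langle(-\Delta_p)^s u,u\rangle$, both immediate from weak convergence. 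The secondary delicate point is bookkeeping the $r=p$ sub-cases for boundedness and for the sign of $\J(u)$, which we handle exactly as in Lemma~\ref{lemmalb}.
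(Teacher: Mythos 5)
Your overall architecture coincides with the paper's: boundedness from combinations of $\J(u_k)$ and $\langle \J'(u_k),u_k\rangle$, Brézis--Lieb splitting along $v_k=u_k-u$, compactness of the subcritical terms, and the Hardy--Sobolev constant together with the energy restriction to exclude concentration. Your critical-case dichotomy ($b=0$ or $b\geq S_\alpha^{\frac{N-\alpha}{ps-\alpha}}\mu^{-p/(p^*_\alpha-p)}$, then $c=\J(u)+(\tfrac1p-\tfrac1{p^*_\alpha})b$ with $\J(u)\geq 0$) is a correct rearrangement of the paper's Case 1. The boundedness step, however, has a genuine gap. The identity $\J(u_k)-\tfrac1r\langle\J'(u_k),u_k\rangle=(\tfrac1p-\tfrac1r)[u_k]_{s,p}^p+\mu(\tfrac1r-\tfrac1q)\int_\Omega\frac{|u_k|^q}{|x|^\alpha}dx$ has nonnegative second coefficient only when $q\geq r$; the justification ``since $q\leq p^*_\alpha$'' is a non sequitur, and the coefficient is strictly negative precisely in the sub-case $q=p<r$, $\alpha<ps$, which belongs to part (1) with \emph{no} smallness assumption on $\mu$. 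Absorbing $\mu\int_\Omega\frac{|u_k|^p}{|x|^\alpha}dx$ into the principal part would require $\mu<\lambda_{1,\alpha}$, which is not hypothesized. The paper instead interpolates, $\int_\Omega\frac{|u_k|^p}{|x|^\alpha}dx\leq\bigl(\int_\Omega\frac{|u_k|^p}{|x|^{ps}}dx\bigr)^{\alpha/ps}\bigl(\int_\Omega|u_k|^pdx\bigr)^{1-\alpha/ps}\leq C[u_k]_{s,p}^{\alpha/s}(1+[u_k]_{s,p})^{\frac pr(1-\frac{\alpha}{ps})}$, the last factor being controlled because $p\J-\langle\J',\cdot\rangle$ bounds $\int_\Omega|u_k|^r dx$ linearly in $[u_k]_{s,p}$; the total exponent is $<p$ exactly because $\alpha<ps$. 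Symmetrically, your treatment of $r=p$ invokes $\lambda<\lambda_1$, which is not a hypothesis of parts (1)--(2); the paper controls $\int_\Omega|u_k|^pdx\leq C\bigl(\int_\Omega\frac{|u_k|^q}{|x|^\alpha}dx\bigr)^{p/q}\leq C(1+[u_k]_{s,p})^{p/q}$ with $p/q<1$. These sublinear-absorption arguments are the missing idea, and without them the theorem as stated (arbitrary $\lambda,\mu>0$ in parts (1) and (2)) is not proved.

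A second, smaller error occurs in the convergence step for case (3): you conclude $[v_k]_{s,p}^p=o_k(1)$ as if the Hardy term vanished, but there $q=p^*_{ps}=p$ is \emph{critical}, so $\int_\Omega\frac{|v_k|^p}{|x|^{ps}}dx$ need not vanish. One only obtains $[v_k]_{s,p}^p-\mu\int_\Omega\frac{|v_k|^p}{|x|^{ps}}dx=o_k(1)$ and must then invoke the Hardy inequality and $\mu<S_{ps}$ to deduce $(1-\mu/S_{ps})[v_k]_{s,p}^p\leq o_k(1)$ --- consistent with how you used $\mu<S_{ps}$ for boundedness, but missing as written.
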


\begin{proof}
Assume that $\{u_k\}_{k}$ is a $(PS)_c$-sequence of $\J$, that is
\begin{align*}
\J(u_k)=c+o_k(1),\quad \langle \J'(u_k),\varphi\rangle
=\langle w_k,\varphi\rangle,
\,\,\,\, \text{$w_k\to 0$ in $W^{-s,p'}(\Omega)$}.
\end{align*}
We have
\begin{align}\label{pscom0} [u_k]_{s,p}^p-\frac{\lambda p}{r}\int_\Omega|u_k|^r\, dx-\frac{\mu p}{q}\int_\Omega\frac{|u_k|^q}{|x|^\alpha}
\, dx&= p\J(u_k)= pc+o_k(1), \\
[u_k]_{s,p}^p- \lambda \int_\Omega|u_k|^r\, dx- \mu \int_\Omega\frac{|u_k|^q}{|x|^\alpha}\, dx&=\langle \J'(u_k),u_k\rangle=o_k(1)[u_k]_{s,p}, \notag
\end{align}
as $k\to\infty$. Then we get
\begin{equation}\label{pscom1}
 C+o_k(1)[u_k]_{s,p}\geq p\J(u_k)-\langle \J'(u_k),u_k\rangle =\mu\Big(1-\frac{p}{q}\Big)\int_{\Omega}\frac{|u_k|^q}{|x|^\alpha}\, dx+\lambda\Big(1-\frac{p}{r}\Big)\int_{\Omega}|u_k|^r\, dx.
\end{equation}
We first show that $\{u_k\}_k$ is bounded in $W^{s,p}_0(\Omega)$, splitting the proof in two cases.

\noindent
{\sc Case 1}: $\alpha<ps$.

\noindent

If both $q$ and $r$ are greater than $p$, then \eqref{pscom1} implies
\[
\int_{\Omega}\frac{|u_k|^q}{|x|^\alpha}\, dx\leq C(1+ [u_k]_{s, p}), \qquad \int_{\Omega}|u_k|^r\, dx\leq C(1+ [u_k]_{s, p})
\]
which, combined with  \eqref{pscom0}, gives
\begin{equation}
\label{relb}
[u_k]_{s,p}^p\leq C(1+[u_k]_{s,p})
\end{equation}
and the boundedness of $\{u_k\}_k$ in $W^{s,p}_0(\Omega)$ readily follows.

If $r=p$, then necessarily $q>p$ and H\"older's inequality with \eqref{pscom1} ensures
\[
\int_{\Omega}|u_k|^p\, dx = \int_{\Omega}\frac{|u_k|^p}{|x|^{\frac{p\alpha}{q}}}\, |x|^{\frac{p\alpha}{q}}\, dx \leq  C\Big(\int_{\Omega}\frac{|u_k|^q}{|x|^\alpha}\, dx\Big)^{\frac{p}{q}}\leq C(1+[u_k]_{s,p}^{\frac{p}{q}}),
\]
giving boundedness of $\{u_k\}_k$ by the same argument as before.

Finally, if $q=p$ and $r>p$, H\"older (recall that $\alpha<ps$) and Hardy inequalities give
\[
\begin{split}
\int_{\Omega}\frac{|u_k|^p}{|x|^\alpha}\, dx&\leq \left(\int_{\Omega}\frac{|u_k|^p}{|x|^{ps}}\, dx\right)^{\frac{\alpha}{ps}}\left(\int_{\Omega}|u_k|^p\, dx\right)^{1-\frac{\alpha}{ps}}\leq C\left(\int_{\Omega}\frac{|u_k|^p}{|x|^{ps}}\, dx\right)^{\frac{\alpha}{ps}}\left(\int_{\Omega}|u_k|^r\, dx\right)^{\frac{p}{r}-\frac{\alpha}{rs}}\\
&\leq C [u_k]^{\frac{\alpha}{s}}_{s, p}\left(1+ [u_k]_{s, p}\right)^{\frac{p}{r}-\frac{\alpha}{rs}}.
\end{split}
\]
Inserting this into \eqref{pscom0} and using \eqref{pscom1} provides
\[
 [u_k]^p_{s, p}\leq C\left[ 1+ [u_k]_{s, p}+  [u_k]^{\frac{\alpha}{s}}_{s, p}+ [u_k]^{\frac{p}{r}+\frac{\alpha}{s}\left(1-\frac 1 r\right)}_{s, p} \right],
\]
which again implies that $\{u_k\}_k$ is bounded in $W^{s,p}_0(\Omega)$ due to
\[
\alpha<ps\quad \Rightarrow\quad \frac{\alpha}{s}<\frac{p}{r}+\frac{\alpha}{s}\left(1-\frac 1 r\right)<p.
\]
{\sc Case 2}: $\alpha=ps$ (thus $q=p$) and $\mu<S_{ps}$.

\noindent
In this case we necessarily have $r>p$ and \eqref{pscom1} implies that
\[
\int_{\Omega}|u_k|^r\, dx\leq C(1+ [u_k]_{s, p}),
\]
so that Hardy's inequality and \eqref{pscom0} gives
\[
\left(1-\frac{\mu}{S_{ps}}\right) [u_k]^p_{s, p}\leq  [u_k]^p_{s, p}-\mu \int_{\Omega}\frac{|u_k|^p}{|x|^{ps}}\, dx\leq C(1+ [u_k]_{s, p}).
\]
Using $\mu<S_{ps}$ implies \eqref{relb} and the boundedness of $\{u_k\}_k$.

\vskip4pt
\noindent
Thus, $\{u_k\}_k$ is bounded, and passing if necessary to a subsequence such that $u_k\rightharpoonup v$ in $W_0^{s,p}(\Omega)$ as $k\to\infty$, all the conclusions of Lemma \ref{colema1} hold true. Corollary \ref{wtwcor} gives that   $\J'(v)=0$ and from $\langle \J'(v),v\rangle=0,$ we have that
\begin{equation}
\label{pscoml}
\J(v)
 =\lambda\left(\frac{1}{p}-\frac{1}{r}\right)\int_\Omega|v|^r\, dx+\mu \left(\frac{1}{p}-\frac{1}{q}\right)\int_\Omega\frac{|v|^{q}}{|x|^\alpha}\, dx\geq 0.
\end{equation}
Next we prove that $u_k\to v$ in $W^{s,p}_0(\Omega)$. By Lemma \ref{colema1} and the boundedness of $\{u_k\}_k$ we have
\[
o_k(1)=\langle \J'(u_k),u_k\rangle-\langle \J'(v),v\rangle
 =[u_k-v]_{s,p}^p-\mu\int_\Omega \frac{|u_k-v|^q}{|x|^\alpha}\, dx-\lambda\int_\Omega|u_k-v|^r\, dx,
 \]
which shows convergence if $q<p^*_\alpha$ and $r<p^*$. It suffices to analyze the case $q=p^*_\alpha$, $r<p^*$. Notice then that Lemma \ref{colema1} (for $q_k\equiv p^*_\alpha$) gives
\begin{align}\label{pscomu}
\J(u_k) &= \J(v)+\frac{1}{p}[u_k-v]_{s,p}^p-\frac{\mu}{p^*_\alpha}\int_\Omega \frac{|u_k-v|^{p^*_\alpha}}{|x|^\alpha}\, dx+o_k(1), \\
	\label{pscomau}
o_k(1)&=\langle \J'(u_k),u_k\rangle-\langle \J'(v),v\rangle
 =[u_k-v]_{s,p}^p-\mu\int_\Omega \frac{|u_k-v|^{p^*_\alpha}}{|x|^\alpha}\, dx+o_k(1).
\end{align}
We split the proof of the convergence as before.

\smallskip
\noindent
{\sc Case 1}: $\alpha<ps$, $r\in  [p, p^*[$ and $c<\left(\frac{1}{p}-\frac{1}{p^*_\alpha}\right)\frac{S_\alpha^{\frac{N-\alpha}{ps-\alpha}}}{\mu^{\frac{p}{p^*_\alpha-p}}}$.
\vskip1pt
\noindent
By virtue of \eqref{pscomau},
\[
 \frac{1}{p}[u_k-v]_{s,p}^p-\frac{\mu}{p^*_\alpha}\int_\Omega \frac{|u_k-v|^{p^*_\alpha}}{|x|^\alpha}\, dx=\left(\frac{1}{p}-\frac{1}{p^*_\alpha}\right)[u_k-v]_{s,p}^p+o_k(1).
 \]
From \eqref{pscoml} and \eqref{pscomu} we have
\[
 \frac{1}{p}[u_k-v]_{s,p}^p-\frac{\mu}{p^*_\alpha}\int_\Omega \frac{|u_k-v|^{p^*_\alpha}}{|x|^\alpha}\, dx\leq \J(u_k)+o_k(1)= c+o_k(1).
\]
Therefore
\[
\limsup_k\left(\frac{1}{p}-\frac{1}{p^*_\alpha}\right)[u_k-v]_{s,p}^p<\left(\frac{1}{p}-\frac{1}{p^*_\alpha}\right)\frac{S_\alpha^{\frac{N-\alpha}{ps-\alpha}}}{\mu^{\frac{p}{p^*_\alpha-p}}}.
\]
Using this and the fractional Sobolev-Hardy inequality, we have
\begin{align*}
o_k(1)
 \underset{\eqref{pscomau}}{=}&[u_k-v]_{s,p}^p-\mu\int_\Omega \frac{|u_k-v|^{p^*_\alpha}}{|x|^{\alpha}}\, dx \nonumber \geq [u_k-v]_{s,p}^p-\mu S_\alpha^{-\frac{p^*_\alpha}{p}}[u_k-v]_{s,p}^{p^*_\alpha}\nonumber\\
 =& [u_k-v]_{s,p}^p\left(1-\mu S_\alpha^{-\frac{p^*_\alpha}{p}}[u_k-v]_{s,p}^{p^*_\alpha-p}\right)\geq \omega_1[u_k-v]_{s,p}^p,
\end{align*}
for some $\omega_1>0$, giving the claim.

\smallskip
\noindent
{\sc Case 2}: $\alpha=ps$ and $\mu<S_{ps}$. We note that $p^*_\alpha=p$ and, from \eqref{pscomau}, we get
\begin{equation*}
o_k(1)
 =[u_k-v]_{s,p}^p-\mu\int_\Omega \frac{|u_k-v|^{p}}{|x|^{ps}}\, dx \nonumber\\
 \geq \Big(1-\frac{\mu}{S_{ps}}\Big)[u_k-v]_{s,p}^p,
\end{equation*}
which implies that $[u_k-v]_{s,p}=o_k(1)$.
\end{proof}

\begin{remark}\rm
\label{gps}
Inspecting the proof we see that the boundedness of Palais-Smale sequences follows solely from the conditions $|\J(u_k)|\leq C$ and  $|\langle \J'(u_k), u_k\rangle|\leq C [u_k]_{s, p}$.
\end{remark}

\section{Positive Solution}

\subsection{Existence of a Ground State}
The existence of a ground state positive solutions follows the standard Mountain pass approach. Let
\[
\Gamma=\left\{\gamma\in C^0([0, 1]; W^{s,p}_0(\Omega)): \gamma(0)=0, \J(\gamma(1))<0\right\},
\]
\[
c_1=\inf_{\gamma\in \Gamma}\sup_{t\in [0, 1]}\J(\gamma(t)).
\]
Our assumptions on the parameters are the following
\begin{equation}
\label{assumptions}
\begin{split}
0\leq \alpha\leq  ps<N,\quad &q\in [p, p^*_\alpha], \quad r\in [p, p^*[,\qquad \max\{q, r\}>p,\\
&\begin{cases}
\lambda>0,\  \mu>0&\text{if $\min\{q, r\}>p$},\\
0<\lambda<\lambda_1,\  \mu>0&\text{if $r=p$},\\
\lambda>0, \ 0<\mu<\lambda_{1,\alpha}&\text{if $q=p$}.
\end{cases}
\end{split}
\end{equation}
From $\max\{q, r\}>p$ it is readily checked that for any given $u\in W^{s,p}_0(\Omega)$, $\J(tu)\to -\infty$  for $t\to +\infty$.
Proceeding as in Lemma \ref{lemmalb} we obtain the lower bounds
\[
\J (u)\geq
\begin{cases}
\left(\dfrac{1}{p}-C[u]_{s, p}^{r-p}-C[u]_{s,p}^{q-p}\right)[u]_{s,p}^p&\text{if $\min\{r, q\}>p$},\\[10pt]
\left(\dfrac{1}{p}-\dfrac{\lambda}{p\lambda_1}-C[u]_{s,p}^{q-p}\right)[u]_{s,p}^p&\text{if $r=p<q$},\\[10pt]
\left(\dfrac{1}{p}-\dfrac{\mu}{p \lambda_{1,\alpha}}-C[u]_{s, p}^{r-p}\right)[u]_{s,p}^p&\text{if $q=p<r$},
\end{cases}
\]
which in turn imply, under assumption \eqref{assumptions}, that
\[
\inf_{[u]_{s,p}=\varrho}\J(u)>0
\]
for sufficiently small $\varrho>0$.
Since the support of any $\gamma \in \Gamma$ intersects $\{[u]_{s,p}=\varrho\}$, it holds $c_1\geq \inf_{[u]_{s,p}=\varrho}\J(u)>0$ and in order to apply the Mountain Pass theorem it only remains to show that $\J$ satisfies the $(PS)_{c_1}$ condition. By Theorem \ref{comth} this is certainly true in the subcritical or Hardy critical case when $\mu<S_{ps}$, so it suffices to consider the critical cases $q=p^*_\alpha>p$ (thus $\alpha<ps$).

\begin{lemma}[$c_1$ in the Palais-Smale range]
Let \eqref{assumptions} be fulfilled for $q=p^*_\alpha>p$. Then
\begin{equation}
\label{c1ps}
	 c_1<\left(\frac{1}{p}-\frac{1}{p^*_\alpha}\right)\frac{S_\alpha^{\frac{N-\alpha}{ps-\alpha}}}{\mu^{\frac{p}{p^*_\alpha-p}}}
\end{equation}
for any $r$ such that
\begin{equation}
\label{condr}
\begin{cases}
r\geq p&\text{if $N\geq p^2\, s$},\\
r>p^*-p'&\text{if $N<p^2\, s$}.
\end{cases}
\end{equation}
\end{lemma}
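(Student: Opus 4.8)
The plan is to estimate the mountain pass level $c_1$ from above by evaluating $\J$ along a suitable ray $t\mapsto t\, u_{\alpha,\eps,\delta}$ emanating from the origin, where $u_{\alpha,\eps,\delta}$ is the truncated Aubin--Talenti function introduced in Section~2.4. Since $c_1\leq \sup_{t\geq 0}\J(t\, u_{\alpha,\eps,\delta})$, it suffices to show that, for $\delta$ fixed small and $\eps\downarrow 0$, this supremum drops strictly below the Palais--Smale threshold $\bigl(\tfrac1p-\tfrac{1}{p^*_\alpha}\bigr)S_\alpha^{\frac{N-\alpha}{ps-\alpha}}\mu^{-\frac{p}{p^*_\alpha-p}}$. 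First I would write $\J(t\, u_{\alpha,\eps,\delta})=\frac{t^p}{p}[u_{\alpha,\eps,\delta}]_{s,p}^p-\frac{\mu t^{p^*_\alpha}}{p^*_\alpha}\int \frac{u_{\alpha,\eps,\delta}^{p^*_\alpha}}{|x|^\alpha}-\frac{\lambda t^r}{r}\int u_{\alpha,\eps,\delta}^r$ and, ignoring momentarily the subcritical term, maximize the pure critical part $\phi(t)=\frac{t^p}{p}A-\frac{\mu t^{p^*_\alpha}}{p^*_\alpha}B$ over $t>0$; its maximum is $\bigl(\tfrac1p-\tfrac{1}{p^*_\alpha}\bigr)\mu^{-\frac{p}{p^*_\alpha-p}}(A^{p^*_\alpha}/B^p)^{\frac{1}{p^*_\alpha-p}}$. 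Plugging in the estimates \eqref{estqa} and \eqref{estqaf} of Lemma~\ref{lebn}, namely $A\leq S_\alpha^{\frac{N-\alpha}{ps-\alpha}}+C(\eps/\delta)^{\frac{N-ps}{p-1}}$ and $B\geq S_\alpha^{\frac{N-\alpha}{ps-\alpha}}-C(\eps/\delta)^{\frac{N-\alpha}{p-1}}$, a Taylor expansion gives that this maximum is at most $\bigl(\tfrac1p-\tfrac{1}{p^*_\alpha}\bigr)S_\alpha^{\frac{N-\alpha}{ps-\alpha}}\mu^{-\frac{p}{p^*_\alpha-p}}+O\bigl((\eps/\delta)^{\frac{N-ps}{p-1}}\bigr)$, since the dominant error comes from the numerator $A$.

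Next I would exploit the subtracted subcritical term to beat this $O\bigl((\eps/\delta)^{\frac{N-ps}{p-1}}\bigr)$ loss. Because $\max_t \J(t\,u_{\alpha,\eps,\delta})$ is attained at some $t_\eps$ bounded away from $0$ and $\infty$ (a standard consequence of Lemma~\ref{lemmalb}-type bounds and the fact that $A,B$ stay comparable to $S_\alpha^{\frac{N-\alpha}{ps-\alpha}}$), one has
\[
c_1\leq \Bigl(\tfrac1p-\tfrac{1}{p^*_\alpha}\Bigr)\frac{S_\alpha^{\frac{N-\alpha}{ps-\alpha}}}{\mu^{\frac{p}{p^*_\alpha-p}}}+C\Bigl(\frac{\eps}{\delta}\Bigr)^{\frac{N-ps}{p-1}}-c\,\lambda\int_{\R^N}u_{\alpha,\eps,\delta}^{\,r}\,dx
\]
for small positive constants $c$. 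Now I invoke the lower bound \eqref{estqal} for $\beta=r$: comparing the exponent of $\eps$ in $\int u_{\alpha,\eps,\delta}^r$ with $\frac{N-ps}{p-1}$ shows when the negative term wins. When $N\geq p^2 s$ one checks $r\geq p > \frac{p^*}{p'}$ forces us into the third branch $\eps^{N-\frac{N-ps}{p}r}$ (or into the logarithmic/lower branches), and in all admissible ranges of $r$ the resulting power of $\eps$ is strictly smaller than $\frac{N-ps}{p-1}$; when $N<p^2 s$ the condition $r>p^*-p'$ is exactly what is needed to ensure $N-\frac{N-ps}{p}r<\frac{N-ps}{p-1}$ (respectively that we land in the $\beta<\frac{p^*}{p'}$ branch with the correct sign of the comparison). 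Hence for $\delta$ fixed and $\eps$ small enough the subcritical gain dominates the critical loss and \eqref{c1ps} follows.

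The main obstacle is the bookkeeping of exponents in the trichotomy of \eqref{estqal}: one must carefully identify, case by case according to whether $r\lessgtr \frac{p^*}{p'}$ and whether $N\lessgtr p^2 s$, which branch of the estimate applies, and verify the strict inequality between the $\eps$-power produced there and $\frac{N-ps}{p-1}$ coming from the critical error. The borderline $\beta=\frac{p^*}{p'}$ case with its logarithm, and the interplay of the two free parameters $\eps\leq\delta/2$ (one fixes $\delta$ small first to make the $O\bigl((\eps/\delta)^{\frac{N-ps}{p-1}}\bigr)$ constant harmless relative to the $\eps$-power of the gain, then sends $\eps\to 0$), require a bit of care but are otherwise routine. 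A secondary point to be checked is that $\J(t\,u_{\alpha,\eps,\delta})<0$ for $t$ large, so that the competitor curve $t\mapsto t\,u_{\alpha,\eps,\delta}$, suitably reparametrized on $[0,1]$, indeed lies in $\Gamma$; this is immediate from $\max\{q,r\}>p$ with $q=p^*_\alpha>p$.
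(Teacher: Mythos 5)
Your proposal is correct and follows essentially the same route as the paper: test the mountain pass level with the truncated bubble $u_{\alpha,\eps,\delta}$, bound the maximum of the critical two-term part by the threshold plus an error $O\bigl((\eps/\delta)^{\frac{N-ps}{p-1}}\bigr)$ via \eqref{estqa}--\eqref{estqaf} (the paper simply takes $\delta=1$ and bounds the sup of the sum by the sum of sups over a fixed interval $[a,A]$, rather than Taylor-expanding $(A^{p^*_\alpha}/B^p)^{1/(p^*_\alpha-p)}$, but this is cosmetic), and then beat that error with the lower bound \eqref{estqal} for $\int u_{\alpha,\eps,\delta}^r$, which reduces to the comparison $N-\frac{N-ps}{p}r<\frac{N-ps}{p-1}\Leftrightarrow r>p^*-p'$ plus the logarithmic borderline. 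One small inaccuracy: under \eqref{condr} one always has $r\geq p^*/p'$ (with equality only when $N=p^2s$ and $r=p$), so in the case $N<p^2s$ the condition $r>p^*-p'>p^*/p'$ places you in the branch $\beta>p^*/p'$ of \eqref{estqal}, not the branch $\beta<p^*/p'$ mentioned in your parenthetical; this does not affect the validity of the exponent comparison you actually carry out.
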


\begin{proof}
Notice that in any case considered in \eqref{condr}  it holds $r\geq p^*/p'$.
Without loss generality we can consider $\delta=1$ in the definition
	of $u_{\alpha,\eps,\delta}\in W^{s,p}_0(\Omega)$ given in formula \eqref{uepsdeltadef}.
	For any sufficiently small $1>\eps>0$, we apply \eqref{estqa}, \eqref{estqaf} and \eqref{estqal} with $\beta=r$, to obtain
	\[
	g_\eps(t):=\J (t u_{\alpha, \eps, 1})\leq \left(\frac{t^p}{p}-\mu\frac{t^q}{q}\right)S_\alpha^{\frac{N-\alpha}{ps-\alpha}} + Ct^p\eps^{\frac{N-ps}{p-1}}+Ct^q\eps^{\frac{N-\alpha}{p-1}}- \frac{1}{C}t^rh_r(\eps)
	\]
	for some large $C>0$, where
	\[
	h_r(\eps):=
	\begin{cases}
	\eps^{\frac{N}{p}}|\log\eps|&\text{if $r=p^*/p'$},\\
	\eps^{N-\frac{N-ps}{p}r}&\text{if $r>p^*/p'$}.
	\end{cases}
	\]
	It is readily seen from this estimate that there exists a small $\eps_0>0$ such that $g_\eps\to -\infty$ for $t\to +\infty$ uniformly for $\eps\in [0, \eps_0]$.  Moreover, $g_\eps(0)=0$, therefore there exists $A>a>0$ such that
	\[
	\sup_{t\geq 0} g_\eps =\sup_{t\in [a, A]} g_\eps(t),\qquad \forall \eps\in\  ]0, \eps_0].
	\]
	Observe that
	\[
	\sup_{t\geq 0} \left(\frac{t^p}{p}-\mu\frac{t^q}{q}\right)=\mu^{\frac{p}{p-p^*_\alpha}}\left(\frac{1}{p}-\frac{1}{p^*_\alpha}\right),
	\]
	so that, for some $C'$ also depending on $a, A$,
	\[
	\begin{split}
	\sup_{t\in [a, A]} g_\eps&\leq \sup_{t\geq 0} \left(\frac{t^p}{p}-\mu\frac{t^q}{q}\right)S_\alpha^{\frac{N-\alpha}{ps-\alpha}} +\sup_{t\in [a, A]}\Big[ Ct^p\eps^{\frac{N-ps}{p-1}}+Ct^q\eps^{\frac{N-\alpha}{p-1}}- \frac{1}{C}t^rh_r(\eps)\Big]\\
	&\leq \left(\frac{1}{p}-\frac{1}{p^*_\alpha}\right)\frac{S_\alpha^{\frac{N-\alpha}{ps-\alpha}}}{\mu^{\frac{p}{p^*_\alpha-p}}} +C'\eps^{\frac{N-ps}{p-1}}+C'\eps^{\frac{N-\alpha}{p-1}}- \frac{1}{C'}h_r(\eps).
	\end{split}
	\]
	Hence it suffices to prove that there exists a sufficiently small $\eps<\eps_0$ such that the last three term give a negative contribution. Using $\alpha<ps$, $\eps<1$ we have $\eps^{\frac{N-\alpha}{p-1}}<\eps^{\frac{N-ps}{p-1}}$
	so that a sufficient condition is that
	\[
	\lim_{\eps\to 0^+}\frac{h_r(\eps)}{\eps^{\frac{N-ps}{p-1}}}=+\infty, \qquad \text{for $r\geq p^*/p'$}.
	\]
	If $r>p^*/p'$, the previous condition is satisfied for
\[
	\frac{N-ps}{p-1}>N-\frac{N-ps}{p}r\quad \Leftrightarrow\quad r>p^*-p',
\]
while if $r=p^*/p'$, due to the logarithmic factor in $h_{r, \eps}$ it suffices that
\[
\frac{N-ps}{p-1}\geq \frac{N}{p}=N-\frac{N-ps}{p}r\quad \Leftrightarrow\quad r\geq p^*-p'.
\]
Now
\[
N\geq p^2\, s\quad \Rightarrow\quad p^*-p'\leq \frac{p^*}{p'}\leq p,
\]
and we obtain the full range $r\geq p$, while
\[
N<p^2\, s\quad \Rightarrow\quad   p^*-p'> \frac{p^*}{p'}>p
\]
giving the range $r>p^*-p'$ in this case.
\end{proof}
\noindent
Regarding \eqref{condr}, notice that when $N=p^2s$ then $p=p^*-p'$.\\
For the next corollary, recall that ${\mathcal N}_+=\{u\in {\mathcal N}: u\geq 0\}$.

\begin{corollary}
Let assumptions \eqref{assumptions} be fulfilled and, in the case $q=p^*_\alpha>p$, suppose that \eqref{condr} holds as well.
Then, there exists a nonnegative critical point $w$ solving $\J(w)=c_1$ and
\begin{equation}
\label{neh}
c_1=\inf_{u\in {\mathcal N}_+}\J (u)
\end{equation}
\end{corollary}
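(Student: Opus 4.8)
The plan is to apply the Mountain Pass Theorem and then identify the minimax level with the constrained infimum. First I would verify that $\J$ has the Mountain Pass geometry: the preceding discussion already gives $\inf_{[u]_{s,p}=\varrho}\J(u)>0$ for small $\varrho$, and for any fixed $u$ one has $\J(tu)\to-\infty$ as $t\to+\infty$ since $\max\{q,r\}>p$, so one may pick $t$ large with $\J(tu)<0$ to produce an admissible path; hence $c_1\geq \inf_{[u]_{s,p}=\varrho}\J(u)>0$. The key extra ingredient is the Palais--Smale condition at level $c_1$: in the subcritical and Hardy-critical cases ($\mu<S_{ps}$) Theorem~\ref{comth} gives $(PS)_c$ for all $c$, while in the critical Hardy--Sobolev case $q=p^*_\alpha>p$ Theorem~\ref{comth}(2) needs $c_1<\left(\tfrac1p-\tfrac1{p^*_\alpha}\right)S_\alpha^{\frac{N-\alpha}{ps-\alpha}}\mu^{-\frac{p}{p^*_\alpha-p}}$, which is exactly the content of the previous Lemma under hypothesis \eqref{condr}. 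Thus the Mountain Pass Theorem yields a critical point $w_0$ with $\J(w_0)=c_1>0$ and $\J'(w_0)=0$.

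Next I would upgrade $w_0$ to a \emph{nonnegative} critical point. The standard trick is to run the mountain pass with the truncated functional $\widetilde\J$ obtained by replacing $|u|^r$ and $|u|^q$ by $(u^+)^r$ and $(u^+)^q$ (equivalently, replacing the nonlinearity by its positive-part version); one checks $\widetilde\J$ has the same geometry and the same $(PS)$ threshold, so it has a critical point $w$ at the corresponding minimax level $\widetilde c_1$. Testing $\widetilde\J'(w)=0$ against $w^-$ and using Lemma~\ref{25} (the inequality $\langle(-\Delta_p)^s w,w^-\rangle\geq \langle(-\Delta_p)^sw^-,w^-\rangle=[w^-]_{s,p}^p$, with the pairing normalized so that $w^-\le 0$) forces $[w^-]_{s,p}=0$, hence $w\geq0$ and $w$ solves \eqref{frac1}. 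Along any path one has $\widetilde\J(tu)=\J(tu^+)$, so $\widetilde c_1=c_1$; therefore $\J(w)=c_1$ with $w\ge 0$, as claimed.

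For the variational characterization \eqref{neh}, I would argue by the two inequalities. Since $w\in{\mathcal N}_+$ (it is a nontrivial nonnegative solution, and $w\neq 0$ because $c_1>0$), we get $\inf_{{\mathcal N}_+}\J\leq \J(w)=c_1$. For the reverse inequality, take any $u\in{\mathcal N}_+$; using $\max\{q,r\}>p$ and $\langle\J'(u),u\rangle=0$ one shows the fibering map $t\mapsto\J(tu)$ attains its maximum over $[0,\infty)$ exactly at $t=1$ (it is positive increasing near $0$ by the coercivity estimates, eventually $-\infty$, and $\frac{d}{dt}\J(tu)=0$ has $t=1$ as its unique positive root given $\langle\J'(u),u\rangle=0$ together with $r,q\ge p$ and the homogeneity structure). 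Then the path $t\mapsto t\,t_1 u$ for suitably large $t_1$ lies in $\Gamma$ after reparametrization, so $c_1\leq \sup_{t\ge0}\J(tu)=\J(u)$; taking the infimum over ${\mathcal N}_+$ gives $c_1\leq\inf_{{\mathcal N}_+}\J$, whence equality.

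The main obstacle I anticipate is the uniqueness of the maximum point of the fibering map $t\mapsto\J(tu)$ on ${\mathcal N}_+$ when $q$ or $r$ equals $p$ and especially when $q=p^*_\alpha$: one must rule out that $\frac{d}{dt}\J(tu)$ has spurious positive zeros and check that the parenthetical coercivity bounds genuinely give positivity of $\J$ on a small sphere for the relevant parameter ranges (this is where $\lambda<\lambda_1$ or $\mu<\lambda_{1,\alpha}$ enters). The rest---mountain pass geometry, the sign argument via Lemma~\ref{25}, and matching the two minimax/constrained levels---is routine once the $(PS)_{c_1}$ condition is in hand, which the preceding Lemma supplies precisely under \eqref{condr}.
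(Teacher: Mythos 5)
Your route to a nonnegative critical point (mountain pass for the truncated functional $\widetilde\J$ with nonlinearities $(u^+)^r$, $(u^+)^q$, then killing $w^-$ by testing with it) is genuinely different from the paper's, and the sign argument itself is correct: $0=\langle\widetilde\J'(w),w^-\rangle=\langle(-\Delta_p)^s w, w^-\rangle\geq[w^-]_{s,p}^p$ by \eqref{b}. The genuine gap is in the identification $\widetilde c_1=c_1$. The identity you invoke, $\widetilde\J(tu)=\J(tu^+)$, is false: the two sides share the potential terms but differ in the leading term, $[u]_{s,p}^p$ versus $[u^+]_{s,p}^p$, and $[u]_{s,p}^p>[u^+]_{s,p}^p+[u^-]_{s,p}^p$ whenever $u$ changes sign --- exactly the nonlocal interaction between $u^+$ and $u^-$ that the paper keeps emphasizing (Remark \ref{remneh}). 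What comes for free is only $\widetilde\J\geq\J$, hence $\widetilde c_1\geq c_1$; the reverse inequality $\widetilde c_1\leq c_1$ (equivalently $\inf_{{\mathcal N}_+}\J\leq c_1$) is precisely the nontrivial half of \eqref{neh}, and your fibering analysis on ${\mathcal N}_+$ only yields the opposite bound $c_1\leq\inf_{{\mathcal N}_+}\J$. As written, your chain closes to $\widetilde c_1=\inf_{{\mathcal N}_+}\J=\J(w)$ but leaves open whether $c_1<\inf_{{\mathcal N}_+}\J$. (A second, more minor, unproved assertion is that $\widetilde\J$ satisfies $(PS)_{\widetilde c_1}$ below the same threshold; this is plausible but not covered by Theorem \ref{comth}, which is stated for $\J$.)

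The paper avoids truncation altogether. It first shows, via the strict concavity of $s\mapsto\J(s^{1/p}u)$, that each fibering map has a unique maximum $t_u$ with $\langle\J'(tu),u\rangle(t-t_u)<0$; this gives $c_1\leq\inf_{\mathcal N}\J$, and since the mountain pass critical point $w$ of $\J$ lies in ${\mathcal N}$, in fact $c_1=\inf_{\mathcal N}\J$. It then takes this $w$ (with $w^+\neq0$, WLOG, by evenness of $\J$), uses \eqref{b} to get $\langle\J'(w^+),w^+\rangle\leq0$ and hence $t_{w^+}\leq1$, and estimates $\inf_{{\mathcal N}_+}\J\leq\J(t_{w^+}w^+)\leq\J(w)=c_1$ through the Nehari representation of the energy; strictness of the last inequality when $w^-\neq0$ forces $w$ to be of constant sign. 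This single projection argument supplies both the missing inequality $\inf_{{\mathcal N}_+}\J\leq c_1$ and the nonnegativity of the ground state. If you wish to keep the truncation scheme, you still need this step (applied to $w_0^+$ for the untruncated mountain pass solution $w_0$) to close the identification of the two levels.
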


\begin{proof}
The existence of a critical point at level $c_1$ has already been discussed, so that it only remains to show that $w\in {\mathcal N}_+$ and \eqref{neh}.
Fix $u\neq 0$ and consider the function
\[
\R_+\ni s\mapsto \psi(s):=\J(s^{1/p} u)=\frac{s}{p}[u]_{s,p}^p-s^{\frac{r}{p}}\frac{\lambda}{r}\int_{\Omega}|u|^r\, dx-s^{\frac{q}{p}}\frac{\mu}{q}\int_{\Omega}\frac{|u|^q}{|x|^\alpha}\, dx.
\]
It holds
\[
\psi(0)=0, \quad \lim_{s\to +\infty}\psi(s)=-\infty,\quad \text{$\psi$ is concave}.
\]
Moreover, by the assumption $\max\{r, q\}>p$, $\psi$ is strictly concave. Therefore $\psi$ has a unique maximum $s_u$, which is positive due to
\[
\psi'(0)=
\begin{cases}
\frac{1}{p}[u]_{s,p}^p&\text{if $\min\{r, q\}>p$},\\
\frac{1}{p}\Big([u]_{s,p}^p-\lambda\|u\|_{L^p}^p\Big)&\text{if $r=p$},\\
\frac{1}{p}\Big([u]_{s,p}^p-\mu \|u/|x|^\alpha\|_{L^p}^p\Big)&\text{if $q=p$},
\end{cases}
\]
all of which are strictly positive due to \eqref{assumptions}. Therefore $\psi'(s)>0$ for $s<s_u$, $\psi'(s)<0$ for $s>s_u$. Changing variable $s=t^p$, this translates to
\begin{equation}
\label{prneh}
\begin{cases}
\text{$\J(t u)\to -\infty$ for $t\to +\infty$},\\
 \text{$t\mapsto \J(t u)$ has a unique positive maximum $t_u$},\\
 \text{$\langle \J'(tu), u\rangle (t-t_u)<0$ $\forall t\neq t_u$}.
 \end{cases}
\end{equation}
Hence, given $u\in {\mathcal N}$, $\J(u)=\sup_{t\geq 0} \J(t u)$, which readily implies
\[
c_1\leq \inf_{u\in {\mathcal N}}\J (u).
\]
Here we used the fact that
$$
c_1=\inf_{\gamma\in \Gamma}\sup_{t\in [0, 1]}\J(\gamma(t))\leq \inf_{u\in W_0^{s,p}(\Omega)}\sup_{t\geq0}\J(tu).
$$
The opposite inequality follows from the solvability of the minimax problem defining $c_1$, as the critical point $w$ at level $c_1$ certainly lies in ${\mathcal N}$. It remains to show that
\[
\inf_{u\in {\mathcal N}}\J (u)=\inf_{u\in {\mathcal N}_+}\J (u).
\]
i.e., that the ground state $w$ solving $J(w)=c_1$ can be chosen nonegative. Clearly the inequality $\geq$ above suffices. Since $\J$ is even and $ w\neq 0$, we may suppose without loss of generality that $w^+\neq 0$. By \eqref{b} we have
\[
\langle \J'(w^+), w^+ \rangle\leq \langle \J'(w), w^+\rangle=0,
\]
so that $t_{w^+}$ defined in \eqref{prneh} satisfies $t_{w^+}\leq 1$. But then
\[
\begin{split}
\inf_{u\in {\mathcal N}_+}\J (u)&\leq \J(t_{w^+} w^+)=t_{w^+}^r\left(\frac{\lambda}{p}-\frac{\lambda}{r}\right)\int_{\Omega}|w^+|^r\, dx+t_{w^+}^q\left(\frac{\mu}{p}-\frac{\mu}{q}\right)\int_{\Omega}\frac{|w^+|^q}{|x|^\alpha}\, dx\\
&\leq \left(\frac{\lambda}{p}-\frac{\lambda}{r}\right)\int_{\Omega}|w|^r\, dx+\left(\frac{\mu}{p}-\frac{\mu}{q}\right)\int_{\Omega}\frac{|w|^q}{|x|^\alpha}\, dx= J(w)=c_1=\inf_{u\in {\mathcal N}}\J (u).
\end{split}
\]
Finally, observe that $w^-\neq 0$ implies that the inequality in the second line of the previous chain is strict. Therefore the mountain pass solution must be of constant sign.
\end{proof}

\subsection{Qualitative and asymptotic properties}
\label{assection1}
In this section we will study the properties of the ground state in the quasi-critical approximation. In order to do so, we will assume that $\alpha<ps$, so that $p^*_\alpha>p$. In this setting, assumptions \eqref{assumptions} simplify to 
\begin{equation}
\label{assumptionsb}
\begin{split}
&0\leq \alpha<  ps<N,\quad q\in \ ]p, p^*_\alpha[\,, \quad r\in [p, p^*[\,,\\
&\begin{cases}
\lambda>0,\  \mu>0&\text{if $r>p$},\\
0<\lambda<\lambda_1,\  \mu>0&\text{if $r=p$}.
\end{cases}
\end{split}
\end{equation}
For $\rho\in \ ]0, p^*_\alpha-p[$\, , define $q_\rho=p^*_\alpha-\rho$ and
\begin{equation}
\label{Jrho}
J_\rho(u):= \frac{1}{p}[u]_{p, s}^p-\frac{\lambda}{r}\int_\Omega|u|^r\, dx-\frac{\mu}{q_\rho}\int_\Omega\frac{|u|^{q_\rho}}{|x|^\alpha}\, dx,
\end{equation}
with $w_\rho\geq 0$ being the corresponding ground state solution, i.e.
\[
{\mathcal N}^\rho=\{u\in W^{s,p}_0(\Omega)\setminus\{0\}:\langle J'_\rho (u), u\rangle=0\},\qquad c_{1, \rho}=\inf_{{\mathcal N}^\rho}J_\rho=J_\rho(w_\rho),
\]
so that, with the notations of the previous section, $c_{1,0}=c_1$.
Our aim is the behavior of the family $\{w_\rho\}_\rho$ as $\rho\to 0$. We begin with a general boundedness result, which will be useful also in the sign-changing case.

\begin{theorem}
Let $\Omega$ be bounded and $u\in W^{s,p}_0(\Omega)$ weakly solve $(-\Delta_p)^s u=f(x, u)$ in $\Omega$ for $f$ satisfying
\begin{equation}
\label{gcond}
|f(x, t)|\leq C\left(1+|t|^{p^*-1}+\frac{|t|^{p^*_\alpha-1}}{|x|^\alpha}\right),\qquad 0\leq \alpha<ps<N.
\end{equation}
Then $u$ is bounded and continuous on $\bar \Omega$.
\end{theorem}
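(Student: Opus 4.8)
\emph{The plan.} I would prove the statement in two stages. First, a Moser/Br\'ezis--Kato iteration shows that $u\in L^q(\Omega)$ for every $q<\infty$; the crucial feature is that $u$ already belongs to the \emph{critical} spaces $L^{p^*}(\Omega)$ and $L^{p^*_\alpha}(\Omega,dx/|x|^\alpha)$, so the critical part of the nonlinearity can be absorbed using the smallness of the high-level tails of $u$ in these norms. Second, once this integrability is available, I would upgrade it to $L^\infty$ and then to continuity on $\bar\Omega$ by invoking the standard regularity theory for $(-\Delta_p)^s$ with a right-hand side in $L^m$, $m>N/(ps)$. For $\alpha=0$ this is in essence \cite{mpsy}; the new ingredient is the singular term, for which the standing hypothesis $\alpha<ps$ is used twice: it guarantees $\int_\Omega|x|^{-\alpha}\,dx<\infty$ (needed in Step 1) and $|x|^{-\alpha}\in L^m(\Omega)$ for some $m>N/(ps)$ (needed in Step 2).

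\emph{Step 1: $u\in L^q(\Omega)$ for all $q<\infty$.} It suffices to treat $v:=u^+$, since $-u$ solves an equation of the same type and $(-u)^+=-u^-$. Fix $\beta\geq 1$ and $M>0$, set $v_M:=\min\{v,M\}$, and test the equation with $\varphi:=v\,v_M^{p(\beta-1)}\in W^{s,p}_0(\Omega)$. By the algebraic inequality of \cite[Lemma A.2]{bp} (already used in the proof of Lemma \ref{lebn}), the left-hand side dominates $c\,\beta^{1-p}[v\,v_M^{\beta-1}]_{s,p}^p$, while the right-hand side, by \eqref{gcond}, splits into three pieces. The contribution of the constant $1$ is at most $|\Omega|+\|v\|_{L^{p\beta}}^{p\beta}$, finite by the inductive hypothesis $u\in L^{p\beta}$. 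For the two critical pieces one splits $\Omega$ according to a threshold $M_0$: on $\{v\leq M_0\}$ one has $v\,v_M^{\beta-1}\leq M_0^\beta$, so these pieces are bounded by a \emph{finite constant independent of $M$} (of the form $C(M_0,\beta)(|\Omega|+\int_\Omega|x|^{-\alpha}\,dx)$), while on $\{v>M_0\}$ H\"older combined with the Sobolev and Hardy--Sobolev inequalities (Lemma \ref{sobhardy}) bounds them by
\[
C\Big(\|v\|_{L^{p^*}(\{v>M_0\})}^{p^*-p}+\big\|v\,|x|^{-\alpha/p^*_\alpha}\big\|_{L^{p^*_\alpha}(\{v>M_0\})}^{p^*_\alpha-p}\Big)[v\,v_M^{\beta-1}]_{s,p}^p,
\]
whose prefactor tends to $0$ as $M_0\to\infty$ because $v\in L^{p^*}(\Omega)$ and $v\,|x|^{-\alpha/p^*_\alpha}\in L^{p^*_\alpha}(\Omega)$. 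Choosing $M_0$ (depending on $\beta$) so that this last term is absorbed into the left-hand side, then letting $M\to\infty$ and using Fatou, one gets $v^\beta\in L^{p^*}(\Omega)$, i.e. $u^+\in L^{p^*\beta}(\Omega)$. Since $p^*/p>1$, starting from $u\in L^{p^*}(\Omega)$ and iterating (with $\beta=(p^*/p)^k$ at the $k$-th step) gives $u\in L^q(\Omega)$ for all $q<\infty$.

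\emph{Step 2: boundedness and continuity.} With $u\in L^q(\Omega)$ for every $q<\infty$, \eqref{gcond} yields $|f(x,u)|\leq C(1+|x|^{-\alpha})$, so $f(\cdot,u)\in L^m(\Omega)$ for every $m<N/\alpha$ and, since $\alpha<ps$, for some $m>N/(ps)$; moreover $f(\cdot,u)\in L^\infty(\Omega\setminus B_\delta(0))$ for every $\delta>0$. By the local boundedness and interior H\"older regularity for weak solutions of $(-\Delta_p)^s w=g$ with $g\in L^m$, $m>N/(ps)$ (see e.g. \cite{brasco} and the references therein), one obtains $u\in L^\infty(\Omega)\cap C^0_{\mathrm{loc}}(\Omega)$. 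Since $0\in\Omega$, near $\partial\Omega$ the datum $f(\cdot,u)$ is bounded; combining this with $u\equiv 0$ on $\Omega^c$ and the $C^2$ regularity of $\partial\Omega$, the boundary regularity theory for the fractional $p$-Laplacian gives H\"older continuity up to the boundary. Patching the interior and boundary estimates yields $u\in C^0(\bar\Omega)$.

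\emph{The main obstacle} is Step 1, and within it the treatment of the two critical growth terms: since $ps<N$ one has $(p^*)'<N/(ps)$, so $f(\cdot,u)$ is a priori only in $L^{(p^*)'}$, which is too little for any direct bootstrap; one genuinely needs the absorption trick based on the decay of the tails $\|u\|_{L^{p^*}(\{|u|>M_0\})}$ and $\|u\,|x|^{-\alpha/p^*_\alpha}\|_{L^{p^*_\alpha}(\{|u|>M_0\})}$ as $M_0\to\infty$. A secondary technical point is the singular weight near the origin, kept under control throughout by $\alpha<ps$.
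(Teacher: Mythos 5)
Your proposal follows essentially the same route as the paper's proof: a Br\'ezis--Kato/Moser iteration with truncated test functions of the form $u\,u_M^{\beta}$, absorption of the two critical terms via the smallness of the tails of $u$ in $L^{p^*}(\Omega)$ and $L^{p^*_\alpha}(\Omega,dx/|x|^\alpha)$ above a large threshold (the paper's condition \eqref{condK}), and then reduction to the known regularity theory for $(-\Delta_p)^s$ with datum in $L^{\bar q}$, $\bar q>N/(ps)$ (the paper cites \cite{bp} for interior bounds and continuity and \cite{IMS} for regularity up to the boundary). The only differences are cosmetic: the paper tests with the odd extension of $t\mapsto t\,t_k^{\beta}$ rather than treating $u^+$ and $-u$ separately, and it keeps the weighted norms $L^q(\Omega,dx/|x|^\alpha)$ throughout.

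One intermediate assertion in your Step 2 is wrong as written: from $u\in L^q(\Omega)$ for every $q<\infty$ you cannot deduce the pointwise bound $|f(x,u)|\leq C(1+|x|^{-\alpha})$ --- that would require $u\in L^\infty$, which is what you are trying to prove. The conclusion you actually need, namely $f(\cdot,u)\in L^{\bar q}(\Omega)$ for some $\bar q>N/(ps)$, is still available: $|u|^{p^*-1}\in L^q(\Omega)$ for all $q<\infty$, and for the singular term one applies H\"older, bounding $\int_\Omega |u|^{(p^*_\alpha-1)\bar q}\,|x|^{-\alpha\bar q}\,dx$ by the product of $\big\||u|^{(p^*_\alpha-1)\bar q}\big\|_{L^{\theta}}$ and $\big\||x|^{-\alpha\bar q}\big\|_{L^{\theta'}}$ with $\bar q\in\ ]N/(ps),N/\alpha[$ and $\theta'$ close to $1$ so that $\alpha\bar q\,\theta'<N$; this is precisely the paper's argument leading to \eqref{pha}. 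With this one-line repair your proof coincides with the paper's. A last remark: you let the threshold $M_0$ depend on $\beta$, whereas the paper fixes $K$ once and for all through \eqref{condK}; this is irrelevant for the present theorem but is what later yields the uniform $L^\infty$ bounds of Remark \ref{remlbound} and Theorem \ref{equibound}.
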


\begin{proof}
By \cite[Theorem 3.1 and Theorem 3.13]{bp} it suffices to prove that $f(x, u)\in L^{\bar q}(\Omega)$ for some $\bar q>\frac{N}{ps}$. Given $k>0$, $t\geq 0$, and $\beta\geq 0$, we define $g_\beta(t):=t (t_k)^{\beta}$, where $t_k:=\min\{t,k\}$
and extend $g_\beta$ and $t_k$ as an odd function. It holds
\begin{equation}
\label{pao}
G_\beta(t):=\int_0^t g_\beta'(\tau)^{\frac{1}{p}}\, d\tau\geq \frac{p\,(\beta+1)^{1/p}}{p+\beta}\, g_{\frac{\beta}{p}}(t),
\end{equation}
and due to \cite[Lemma A.2]{bp} and \eqref{gcond}
\[
[G_\beta(u)]_{s,p}^p\leq \langle (-\Delta_p)^s u, g_\beta(u)\rangle\leq C\left(\int_{\Omega}|u||u_k|^\beta\, dx+\int_\Omega |u|^{p^*} |u_k|^\beta\, dx+\int_\Omega\frac{|u|^{p^*_\alpha}|u_k|^\beta}{|x|^\alpha}\, dx\right).
\]
 By Hardy-Sobolev's inequality with weight $1$ and $|x|^{-\alpha}$ and \eqref{pao}
we obtain, for $\gamma=0, \alpha$,
\begin{equation}
\label{pao2}
\left(\int_{\Omega} \frac{|u|^{p^*_\gamma}|u_k|^{\beta\frac{p^*_\gamma}{p}}}{|x|^\gamma}\, dx\right)^{\frac{p}{p^*_\gamma}}\leq C_\beta\left(\int_{\Omega}|u||u_k|^\beta\, dx+\int_\Omega |u|^{p^*} |u_k|^\beta\, dx+\int_\Omega\frac{|u|^{p^*_\alpha}|u_k|^\beta}{|x|^\alpha}\, dx\right).
\end{equation}
For any $K>1$, $\gamma=0$, $\alpha$, we deduce
\[
\begin{split}
\int_{\Omega}&\frac{|u|^{p^*_\gamma} \, |u_k|^{\beta}}{|x|^\gamma}\, dx\leq \int_{\{|u|<K\}}\frac{|u|^{p^*_\gamma}\, |u_k|^{\beta}}{|x|^\gamma}\, dx+\int_{\{|u|\geq K\}}\frac{|u|^{p^*_\gamma}\,  |u_k|^{\beta}}{|x|^\gamma}\, dx\\
&\leq K^{\beta}\int_{\Omega}\frac{|u|^{p^*_\gamma}}{|x|^\gamma}\, dx+\left(\int_{\{|u|\geq K\}}\frac{|u|^{p^*_\gamma}}{|x|^\gamma}\, dx\right)^{1-\frac{p}{p^*_\gamma}}\left(\int_{\Omega}\frac{|u|^{p^*_\gamma}\, |u_k|^{\beta\frac{p^*_\gamma}{p}}}{|x|^\gamma}\, dx\right)^{\frac{p}{p^*_\gamma}},
\end{split}
\]
while being $K\geq 1$
\[
\begin{split}
\int_{\Omega}&|u||u_k|^\beta\, dx\leq K^\beta\int_{\Omega}|u|+\int_{\{|u|\geq K\}}|u|^{p^*}\,  |u_k|^{\beta}\, dx\\
&\leq K^\beta\int_{\Omega}|u|+\left(\int_{\{|u|\geq K\}}|u|^{p^*}\, dx\right)^{1-\frac{p}{p^*}}\left(\int_{\Omega}|u|^{p^*}\, |u_k|^{\beta\frac{p^*}{p}}\, dx\right)^{\frac{p}{p^*}}.
\end{split}
\]
Since $u^{p^*_\gamma}\in L^1(\Omega, dx/|x|^\gamma)$ by Hardy-Sobolev, we can choose $K$ large enough so that
\begin{equation}
\label{condK}
C_\gamma\left(\int_{\{|u|\geq K\}}\frac{|u|^{p^*_\gamma}}{|x|^\gamma}\, dx\right)^{1-\frac{p}{p^*_\gamma}}<\frac 1 8, \qquad  \mbox{for}\ \gamma=0, \alpha,
\end{equation}
and reabsorb the corresponding terms on the right hand side of \eqref{pao2} (summed for $\gamma=0, \alpha$), to obtain
\[
\begin{split}
\left(\int_{\Omega} |u|^{p^*}|u_k|^{\beta\frac{p^*}{p}}\, dx\right)^{\frac{p}{p^*}}+\left(\int_{\Omega} \frac{|u|^{p^*_\alpha}|u_k|^{\beta\frac{p^*_\alpha}{p}}}{|x|^\alpha}\, dx\right)^{\frac{p}{p^*_\gamma}}&\leq C_\beta K^\beta \left(\int_\Omega|u|\, dx+\int_{\Omega}|u|^{p^*}\, dx+\int_{\Omega}\frac{|u|^{p^*_\alpha}}{|x|^\alpha}\, dx\right)\\
&\leq C_\beta K^\beta (1+[u]_{s,p}^p).
\end{split}
\]
Letting $k\to +\infty$ we deduce that $u\in L^q(\Omega, dx/|x|^\alpha)$ for every $q>1$, with norm depending only on the data  appearing in \eqref{gcond}, $K$ given in \eqref{condK}, $[u]_{s,p}^p$ and $q$. This directly implies that $|u|^{p^*-1}\in L^q(\Omega)$ for any $q>\frac{N}{ps}$. In order to prove that $f(x, u)\in L^{\bar q}(\Omega)$ for some $\bar q>\frac{N}{ps}$ it remains to show, by \eqref{gcond},  that
\begin{equation}
\label{pha}
v:=\frac{|u|^{p^*_\alpha-1}}{|x|^\alpha}\in L^{\bar q}(\Omega),\quad \text{for some $\bar q>\frac{N}{ps}$.}
\end{equation}
To this end, we choose  $\bar q\in\ ]\frac{N}{ps},\frac{N}{\alpha}[$ and $q>1$ so large that
\[
\alpha(\bar q-\frac{1}{q})q'<N.
\]
Then, H\"older inequality with exponents $q$ and $q'$ yields
\[
\int_\Omega v^{\bar q}\, dx=\int_{\Omega}\frac{|u|^{\bar q(p^*_\alpha-1)}}{|x|^{\frac{\alpha}{q}}}\frac{1}{|x|^{\alpha(\bar q-\frac{1}{q})}}\, dx
\leq \left(\int_{\Omega}\frac{u^{\bar q(p^*_\alpha-1)q}}{|x|^\alpha}\, dx\right)^{\frac{1}{q}}
\left(\int_{\Omega}\frac{1}{|x|^{\alpha(\bar q-\frac{1}{q})q'}}\, dx\right)^{1-\frac{1}{q}}.
\]
Both integrals are finite, thus \eqref{pha} and the boundedness and continuity in $\Omega$ of $u$  are proved. Finally, the regularity up to the boundary is proven in \cite{IMS} (notice that we are assuming $0\notin \partial\Omega$).
\end{proof}

\begin{remark}\rm
\label{remlbound}
For future purposes, it is worth outlining the dependence of $\|u\|_\infty$. We have
$$
\|u\|_\infty\leq C\|f(x, u)\|_{\bar q}^{1/(p-1)},
$$
for some universally chosen $\bar q>N/ps$, and the latter only depends on the data appearing in \eqref{gcond}, $[u]_{s,p}$ and $K$ chosen in \eqref{condK}.
\end{remark}

\begin{lemma}
Suppose \eqref{assumptionsb} and \eqref{condr} hold true. Then it holds
\begin{equation}
\label{convc}
\limsup_{\rho\to 0}c_{1, \rho}\leq  c_{1, 0}.
\end{equation}
\end{lemma}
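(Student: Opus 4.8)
The plan is to test the infimum defining $c_{1,\rho}$ against a suitable rescaling of the critical ground state $w:=w_0$, exploiting that the functionals $J_\rho$ converge to $J_0$ along the half-line $\{tw:t\ge0\}$, while $t\mapsto J_0(tw)$ attains its maximum, equal to $c_{1,0}$, at $t=1$. Concretely: since $w\ge0$, $w\not\equiv0$, $J_0(w)=c_{1,0}$ and $\langle J_0'(w),w\rangle=0$, one argues as in \eqref{prneh} for $J_0$ (note $q=p^*_\alpha>p$, as $\alpha<ps$) that $t\mapsto J_0(tw)$ is strictly concave in $t^p$, diverges to $-\infty$, and has a unique positive maximizer, necessarily $t=1$; hence $J_0(tw)\le c_{1,0}$ for every $t\ge0$. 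For $\rho\in \ ]0, p^*_\alpha-p[$ one has $q_\rho=p^*_\alpha-\rho>p$, so $\max\{q_\rho,r\}>p$, and the same reasoning applied to $J_\rho$ produces a unique $t_\rho>0$ with $t_\rho w\in{\mathcal N}^\rho$ and $J_\rho(t_\rho w)=\sup_{t\ge0}J_\rho(tw)$; in particular $c_{1,\rho}\le J_\rho(t_\rho w)$.

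The key point is a bound on $t_\rho$ uniform for small $\rho$. Set $a_\rho:=\int_\Omega|w|^{q_\rho}|x|^{-\alpha}\,dx$. Since $p\le q_\rho\le p^*_\alpha$ one has $|w|^{q_\rho}\le|w|^p+|w|^{p^*_\alpha}$ pointwise, and $(|w|^p+|w|^{p^*_\alpha})|x|^{-\alpha}\in L^1(\Omega)$ by Lemma \ref{sob-func}; hence by dominated convergence $a_\rho\to a_0:=\int_\Omega|w|^{p^*_\alpha}|x|^{-\alpha}\,dx>0$. Using the vanishing of $\frac{d}{dt}J_\rho(tw)$ at $t=t_\rho$, discarding the nonnegative contribution of the $L^r$ term (when $r=p$, instead moving $\lambda\|w\|_{L^p}^p$ to the left, which is legitimate because $\lambda<\lambda_1\le[w]_{s,p}^p/\|w\|_{L^p}^p$), and dividing by $t_\rho^{p-1}$, one obtains
\[
[w]_{s,p}^p\ \ge\ \mu\,a_\rho\,t_\rho^{q_\rho-p}.
\]
Since $a_\rho\ge a_0/2$ and $q_\rho-p\ge(p^*_\alpha-p)/2$ once $\rho$ is small, this forces $t_\rho\le T$ for a constant $T$ independent of such $\rho$.

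Finally, along the ray the functionals converge uniformly on $[0,T]$: indeed
\[
J_\rho(tw)-J_0(tw)=\mu\Big(\frac{t^{p^*_\alpha}}{p^*_\alpha}\,a_0-\frac{t^{q_\rho}}{q_\rho}\,a_\rho\Big),
\]
and $t^{q_\rho}\to t^{p^*_\alpha}$ uniformly on $[0,T]$ together with $a_\rho\to a_0$ give $\sup_{t\in[0,T]}|J_\rho(tw)-J_0(tw)|\to0$. Combining the above,
\[
c_{1,\rho}\ \le\ J_\rho(t_\rho w)\ =\ J_0(t_\rho w)+\big(J_\rho-J_0\big)(t_\rho w)\ \le\ c_{1,0}+\sup_{t\in[0,T]}\big|J_\rho(tw)-J_0(tw)\big|\ \longrightarrow\ c_{1,0},
\]
which is \eqref{convc}. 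The one genuinely delicate step is the uniform-in-$\rho$ upper bound on $t_\rho$: it rests on $a_\rho$ staying bounded away from $0$, i.e.\ on $w\not\equiv0$ together with $a_\rho\to a_0>0$; everything else is a routine continuity argument.
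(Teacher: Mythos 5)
Your proof is correct and follows essentially the same route as the paper: both test $c_{1,\rho}$ against $t_\rho w_0$, where $t_\rho$ is the Nehari projection of the critical ground state $w_0$ onto ${\mathcal N}^\rho$, and then pass to the limit using dominated convergence in the singular term. The only (minor) difference is that the paper establishes $t_\rho\to 1$ via convergence of the argmaxes of the locally uniformly convergent concave functions $s\mapsto J_\rho(s^{1/p}w_0)$, whereas you content yourself with a uniform upper bound $t_\rho\le T$ extracted from the Nehari identity and then invoke $J_0(t_\rho w_0)\le c_{1,0}$ — a slightly more elementary handling of the same step.
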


\begin{proof}
Fix a sequence $\rho_n\to 0$, let $w_0$ solve the critical problem for $c_{1, 0}$ and let $t_n>0$ be such that $t_n w_0\in {\mathcal N}^{\rho_n}$, constructed through \eqref{prneh}. The family of functions $g_n(t):= J_{\rho_n}(t w_0)$ converges locally uniformly on $\R_+$ to $g_\infty(t):=J_0(t w_0)$ and
$g_n(t)\to -\infty$ for $t\to +\infty$ uniformly in $n$. Therefore, by the uniqueness of the maximum of $g_{n}$ and $g_\infty$, it follows that  $t_n={\rm Argmax}(g_n)\to {\rm Argmax}(g_\infty)=1$ as $n\to +\infty$. But then
\[
\limsup_{n} c_{1, \rho_n}\leq \limsup_n J_{\rho_n}(t_n w_0)=J_{0}(w_0)=c_{1, 0},
\]
since by dominated convergence
\[
\frac{t_n^{q_{\rho_n}}}{q_{\rho_n}}\int_{\Omega}\frac{|w_0|^{q_{\rho_n}}}{|x|^\alpha}\, dx \to \frac{1}{q_0}\int_{\Omega}\frac{|w_0|^{q_0}}{|x|^\alpha}\, dx.
\]
\end{proof}

\begin{remark}\rm
Even if not needed, proceeding as in \cite{gy} it can be proved that actually $c_{1, \rho}\to c_{1, 0}$.
\end{remark}

\begin{lemma}
\label{bsol}
Let $0\leq \alpha<ps<N$, $r\in [p, p^*[$ and let  $\rho_n>0$, $\rho_n\to 0$.  If  $u_n$ is a solution to \eqref{frac1} with $q=p^*_\alpha-\rho_n$ such that $\{J_{\rho_n}(u_n)\}_n$ is bounded,  then $\{u_n\}_n$ is bounded and up to subsequences weakly converges to a solution $u\in W^{s,p}_0(\Omega)$ of $J'_0(u)=0$.
\end{lemma}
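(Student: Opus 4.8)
The plan is to split the argument into two steps: an a priori bound for $\{u_n\}_n$ in $W^{s,p}_0(\Omega)$, and then a passage to the limit in the weak formulation of the equations; the critical growth is felt only in the second step. Throughout, write $q_n:=q_{\rho_n}=p^*_\alpha-\rho_n$, and note that $\alpha<ps$ forces $p^*_\alpha>p$, so $q_n\to p^*_\alpha>p$ and $p<q_n<p^*_\alpha$ for all large $n$.

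\emph{Step 1: boundedness.} Since each $u_n$ is a weak solution of \eqref{frac1} with $q=q_n$, we have $\langle J'_{\rho_n}(u_n),\varphi\rangle=0$ for every $\varphi\in W^{s,p}_0(\Omega)$; in particular $\langle J'_{\rho_n}(u_n),u_n\rangle=0$. Together with the hypothesis $|J_{\rho_n}(u_n)|\le C$ this places us in the situation of Theorem~\ref{comth}/Remark~\ref{gps}, the only point being that the estimates there must be carried out uniformly in $n$. Concretely, subtracting the two identities gives
\[
p\,J_{\rho_n}(u_n)=\lambda\Big(1-\frac{p}{r}\Big)\int_\Omega|u_n|^r\,dx+\mu\Big(1-\frac{p}{q_n}\Big)\int_\Omega\frac{|u_n|^{q_n}}{|x|^\alpha}\,dx,
\]
and since $1-p/q_n$ is bounded below by a positive constant for $n$ large, we obtain $\int_\Omega|u_n|^{q_n}/|x|^\alpha\,dx\le C$ (and, if $r>p$, also $\int_\Omega|u_n|^r\,dx\le C$). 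Feeding this back into the Nehari identity $[u_n]_{s,p}^p=\lambda\int_\Omega|u_n|^r\,dx+\mu\int_\Omega|u_n|^{q_n}/|x|^\alpha\,dx$ bounds $[u_n]_{s,p}$ when $r>p$; when $r=p$ one instead estimates $\lambda\int_\Omega|u_n|^p\,dx\le(\lambda/\lambda_1)\,[u_n]_{s,p}^p$ via \eqref{lambdaalpha} and absorbs that term, which is legitimate because $\lambda<\lambda_1$ under \eqref{assumptionsb}. In either case $\{u_n\}_n$ is bounded, so up to a subsequence $u_n\weakto u$ in $W^{s,p}_0(\Omega)$ (hence $u\in W^{s,p}_0(\Omega)$), $u_n\to u$ in $L^\sigma(\Omega)$ for every $\sigma\in[1,p^*[$ and a.e.\ in $\R^N$; passing to a further subsequence we may also assume that the conclusions of Lemma~\ref{colema1} hold with $q_k=q_n$ (legitimate since $p<q_n\le p^*_\alpha$ and $q_n\to p^*_\alpha$, after discarding finitely many indices).

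\emph{Step 2: limit equation.} Writing the weak formulation as
\[
\langle(-\Delta_p)^s u_n,\varphi\rangle=\lambda\int_\Omega|u_n|^{r-2}u_n\,\varphi\,dx+\mu\int_\Omega\frac{|u_n|^{q_n-2}u_n}{|x|^\alpha}\,\varphi\,dx,\qquad\forall\,\varphi\in W^{s,p}_0(\Omega),
\]
I would pass to the limit term by term. The left-hand side tends to $\langle(-\Delta_p)^s u,\varphi\rangle$ by the weak-to-weak continuity of $(-\Delta_p)^s$ (Lemma~\ref{wtwcont}). The first term on the right converges because $r<p^*$ makes the embedding $W^{s,p}_0(\Omega)\hookrightarrow L^r(\Omega)$ compact, so $u_n\to u$ in $L^r(\Omega)$ and hence $|u_n|^{r-2}u_n\to|u|^{r-2}u$ in $L^{r'}(\Omega)$ by continuity of the Nemytskii operator, while $\varphi\in L^r(\Omega)$. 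The remaining term is the only delicate one, the exponents $q_n\to p^*_\alpha$ being critical so that no compact embedding is at hand; here I would invoke part~(3) of Lemma~\ref{colema1} with $q_k=q_n$, which gives precisely $\int_\Omega|u_n|^{q_n-2}u_n\,\varphi/|x|^\alpha\,dx\to\int_\Omega|u|^{p^*_\alpha-2}u\,\varphi/|x|^\alpha\,dx$. Collecting the three limits yields $\langle J'_0(u),\varphi\rangle=0$ for every $\varphi\in W^{s,p}_0(\Omega)$, i.e.\ $J'_0(u)=0$, as desired.

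The main (and essentially only) obstacle is the passage to the limit in the critical weighted term; the whole of it is already packaged into Lemma~\ref{colema1}(3), whose proof in turn rests on the Br\'ezis--Lieb Lemma~\ref{blle} and weak compactness in $L^{(p^*_\alpha)'}(\Omega)$. Note finally that the limit $u$ is allowed to vanish, so the statement neither asserts nor needs $u\neq0$.
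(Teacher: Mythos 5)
Your proof is correct and follows the same two-step structure as the paper's: boundedness from the Nehari identity combined with the energy bound (exactly as in Theorem \ref{comth} and Remark \ref{gps}), followed by a passage to the limit in the weak formulation. The only real difference is in how the singular term is handled. The paper first invokes the weak-to-weak continuity of $J_0'$ (Corollary \ref{wtwcor}) to get $\lim_n\langle J_0'(u_n),\varphi\rangle=\langle J_0'(u),\varphi\rangle$, and then disposes of the discrepancy $\langle J_0'(u_n)-J'_{\rho_n}(u_n),\varphi\rangle=\int_\Omega\bigl(|u_n|^{p^*_\alpha-2}u_n-|u_n|^{q_n-2}u_n\bigr)\varphi\,|x|^{-\alpha}\,dx$ by a dominated-convergence-type estimate; you instead pass to the limit term by term and absorb the whole critical term into Lemma \ref{colema1}(3), which is stated precisely for variable exponents $q_n\to p^*_\alpha$. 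Both routes are legitimate, and yours arguably leans on the cleaner, already-proved tool. One minor remark on Step 1: in the subcase $r=p$ you absorb $\lambda\|u_n\|_{L^p}^p$ using $\lambda<\lambda_1$, which is available under \eqref{assumptionsb} (the setting in which the lemma is applied) but is not among the hypotheses of the lemma as stated; the paper's Case 1 of Theorem \ref{comth} instead bounds $\|u_n\|_{L^p}^p$ by H\"older against the already-controlled Hardy term, which avoids any condition on $\lambda$. Either argument suffices for the intended application.
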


\begin{proof}
As in Theorem \ref{comth}, the boundedness of  $\{J_{\rho_n}(u_n)\}_n$ and $J_{\rho_n}'(u_n)\equiv 0$ imply the boundedness of $\{u_n\}_n$ in $W^{s,p}_0(\Omega)$ (see also Remark \ref{gps}), so that, up to subsequences, we can assume $u_n\weakto u$ in $ W^{s,p}_0(\Omega)$ and pointwise a.e. in $\Omega$. Therefore, by Corollary \ref{wtwcor} and dominated convergence,  for every $\varphi\in W^{s,p}_0(\Omega)$,
\[
\langle J_0'(u), \varphi\rangle=\lim_n\langle J_0'(u_n), \varphi\rangle=\lim_n\langle J_0'(u_n)-J'_{\rho_n}(u_n), \varphi\rangle=\lim_n\int_\Omega\frac{|u_n|^{p^*_\alpha-1}-|u_n|^{p^*_\alpha-1-\rho_n}}{|x|^\alpha}\varphi\, dx=0,
\]
hence $J_0'(u)=0$, as claimed.
\end{proof}

\begin{theorem}
\label{equibound}
Let \eqref{assumptionsb} and \eqref{condr} hold. Then, the family $\{w_\rho\}_\rho$ is strongly compact in $W^{s, p}_0(\Omega)$ and any limit point of $\{w_\rho\}_\rho$ is a nontrivial critical point of minimal energy for $J_0$. Moreover, any sequence $\{w_{\rho_n}\}_n$ with $\rho_n\to 0$ admits an equi-bounded subsequence in $L^\infty$.
\end{theorem}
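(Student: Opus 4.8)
The plan is to run a quasi-critical approximation argument: along an arbitrary sequence $\rho_n\downarrow 0$ we extract a weak limit of the ground states $w_{\rho_n}$, prove it is nontrivial by exploiting that the approximate ground energies lie strictly below the bubble threshold, and then upgrade weak convergence to strong. First, since $w_{\rho_n}\in\mathcal N^{\rho_n}$, Lemma~\ref{lemmalb} applied to $J_{\rho_n}$ (whose exponents fall under \eqref{assumptionsb}, the relevant Hardy--Sobolev constant being uniformly bounded for $\rho_n$ small) gives $[w_{\rho_n}]_{s,p}\geq\delta_0>0$ with $\delta_0$ independent of $n$; together with \eqref{convc} this shows $\{c_{1,\rho_n}\}_n$ is bounded, so Lemma~\ref{bsol} yields, up to a subsequence, $w_{\rho_n}\weakto w$ in $W^{s,p}_0(\Omega)$ and a.e., with $J_0'(w)=0$ and (as $\{u\geq0\}$ is weakly closed) $w\geq0$. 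Put $a_n:=[w_{\rho_n}-w]_{s,p}^p$ and $b_n:=\int_\Omega|w_{\rho_n}-w|^{q_{\rho_n}}/|x|^\alpha\,dx$. By Lemma~\ref{colema1}(1)--(2) and the compact embedding $W^{s,p}_0(\Omega)\hookrightarrow L^r(\Omega)$ (valid since $r<p^*$) one has $[w_{\rho_n}]_{s,p}^p=[w]_{s,p}^p+a_n+o_n(1)$, $\int_\Omega|w_{\rho_n}|^{q_{\rho_n}}/|x|^\alpha\,dx=\int_\Omega|w|^{p^*_\alpha}/|x|^\alpha\,dx+b_n+o_n(1)$ and $\int_\Omega|w_{\rho_n}|^r\,dx\to\int_\Omega|w|^r\,dx$. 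Subtracting the Nehari identities $\langle J_{\rho_n}'(w_{\rho_n}),w_{\rho_n}\rangle=\langle J_0'(w),w\rangle=0$ gives $a_n=\mu b_n+o_n(1)$, and inserting the above into $c_{1,\rho_n}=J_{\rho_n}(w_{\rho_n})$ (using $1/q_{\rho_n}\to1/p^*_\alpha$ and boundedness) produces the identity $c_{1,\rho_n}=J_0(w)+\big(\tfrac1p-\tfrac1{p^*_\alpha}\big)a_n+o_n(1)$.

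The crux is to show $w\neq0$. Assume not. Then $t_n:=[w_{\rho_n}]_{s,p}^p=a_n+o_n(1)$ and, since $\int_\Omega|w_{\rho_n}|^r\,dx\to0$, the Nehari identity gives $t_n=\mu\int_\Omega|w_{\rho_n}|^{q_{\rho_n}}/|x|^\alpha\,dx+o_n(1)$. By H\"older interpolation between $L^{p^*_\alpha}(\Omega,dx/|x|^\alpha)$ and $L^1(\Omega,dx/|x|^\alpha)$, noting $\kappa:=\int_\Omega|x|^{-\alpha}\,dx<\infty$ because $\alpha<ps<N$, and by the definition \eqref{criticalfrachardyg} of $S_\alpha$, one gets $\int_\Omega|w_{\rho_n}|^{q_{\rho_n}}/|x|^\alpha\,dx\leq\kappa^{\,1-q_{\rho_n}/p^*_\alpha}S_\alpha^{-q_{\rho_n}/p}t_n^{\,q_{\rho_n}/p}$, hence $t_n\leq\mu\,\kappa^{\,1-q_{\rho_n}/p^*_\alpha}S_\alpha^{-q_{\rho_n}/p}t_n^{\,q_{\rho_n}/p}+o_n(1)$. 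Dividing by $t_n\geq\delta_0^p>0$ and letting $n\to\infty$ (so $q_{\rho_n}\to p^*_\alpha$, $\kappa^{1-q_{\rho_n}/p^*_\alpha}\to1$, and $p^*_\alpha/(p^*_\alpha-p)=(N-\alpha)/(ps-\alpha)$) yields $\liminf_n t_n\geq S_\alpha^{(N-\alpha)/(ps-\alpha)}\mu^{-p/(p^*_\alpha-p)}$; by the identity above and $J_0(0)=0$ this gives $\liminf_n c_{1,\rho_n}\geq\big(\tfrac1p-\tfrac1{p^*_\alpha}\big)S_\alpha^{(N-\alpha)/(ps-\alpha)}\mu^{-p/(p^*_\alpha-p)}$. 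But $\limsup_n c_{1,\rho_n}\leq c_{1,0}$ by \eqref{convc}, and by \eqref{c1ps} — where hypothesis \eqref{condr} is used — $c_{1,0}$ is \emph{strictly} below that quantity, a contradiction. Hence $w\neq0$.

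Since $w\neq0$ and $J_0'(w)=0$, $w\in\mathcal N$, so $J_0(w)\geq c_{1,0}$; the identity $c_{1,\rho_n}=J_0(w)+\big(\tfrac1p-\tfrac1{p^*_\alpha}\big)a_n+o_n(1)$ then gives $c_{1,\rho_n}\geq c_{1,0}+o_n(1)$, which with $\limsup_n c_{1,\rho_n}\leq c_{1,0}$ and $a_n\geq0$ forces both $a_n\to0$ and $J_0(w)=c_{1,0}$. Thus $w_{\rho_n}\to w$ strongly in $W^{s,p}_0(\Omega)$, $w$ is a nontrivial nonnegative critical point of $J_0$ with $J_0(w)=c_{1,0}=\inf_{\mathcal N}J_0$, i.e. of minimal energy; as $\rho_n\downarrow0$ was arbitrary, $\{w_\rho\}_\rho$ is strongly compact and each of its limit points has these properties. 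For the last assertion, along such a strongly convergent subsequence $\{w_{\rho_n}\}_n$ is bounded in $W^{s,p}_0(\Omega)$ and, by strong convergence, $\{|w_{\rho_n}|^{p^*}\}_n$ and $\{|w_{\rho_n}|^{p^*_\alpha}/|x|^\alpha\}_n$ are uniformly integrable, so the cutoff $K$ in \eqref{condK} may be chosen independently of $n$; since $w_{\rho_n}$ solves \eqref{frac1} with $q=q_{\rho_n}\leq p^*_\alpha$ and $\lambda|t|^{r-1}+\mu|t|^{q_{\rho_n}-1}/|x|^\alpha$ obeys \eqref{gcond} with constants uniform in $n$ (an additive term $\mu|x|^{-\alpha}$ arising when $|t|\leq1$ is harmless, lying in $L^{\bar q}(\Omega)$ for some $\bar q>N/(ps)$ precisely because $\alpha<ps$), the boundedness theorem above together with Remark~\ref{remlbound} yields $\sup_n\|w_{\rho_n}\|_\infty<\infty$.

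The one genuinely delicate point is the nontriviality argument: one must push the varying-exponent Hardy--Sobolev estimates through the limit $n\to\infty$ and recover the \emph{sharp} constant $S_\alpha^{(N-\alpha)/(ps-\alpha)}$, so that the energy a possible vanishing of $w$ would create strictly exceeds the bubble level and contradicts the strict inequality \eqref{c1ps}.
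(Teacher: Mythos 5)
Your proof is correct, and while it uses the same ingredients as the paper (Lemma \ref{bsol}, the variable-exponent Br\'ezis--Lieb Lemma \ref{colema1}, the upper bound \eqref{convc}, the strict threshold inequality \eqref{c1ps}, the uniform lower bound of Lemma \ref{lemmalb}, and Remark \ref{remlbound} for the $L^\infty$ part), it executes the two central deductions differently. The paper first proves strong convergence by the Palais--Smale absorption mechanism of Theorem \ref{comth}: from $c_{1,\rho_n}<c_{1,0}+\eps$ below the bubble level it deduces $\limsup_n[w_{\rho_n}-w_0]_{s,p}^p<S_\alpha^{\frac{N-\alpha}{ps-\alpha}}\mu^{-\frac{p}{p^*_\alpha-p}}$ and absorbs the critical term via Hardy--Sobolev; nontriviality then falls out for free from $[w_{\rho_n}]_{s,p}\geq\delta_0$. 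You reverse the order: you first rule out $w=0$ by showing that a vanishing weak limit would force $\liminf_n c_{1,\rho_n}$ up to the bubble energy (a sharp-constant computation with the varying exponent $q_{\rho_n}$, which you carry out correctly, including the harmless factor $\kappa^{1-q_{\rho_n}/p^*_\alpha}\to1$), contradicting \eqref{convc} and \eqref{c1ps}; you then obtain strong convergence by squeezing the exact energy identity $c_{1,\rho_n}=J_0(w)+(\tfrac1p-\tfrac1{p^*_\alpha})a_n+o_n(1)$ between $J_0(w)\geq c_{1,0}$ (from $w\in{\mathcal N}$ and \eqref{neh}) and $\limsup_n c_{1,\rho_n}\leq c_{1,0}$, which forces $a_n\to0$ and $J_0(w)=c_{1,0}$ simultaneously. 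Your route buys a cleaner, more self-contained compactness step (no need to re-run the absorption argument with variable exponents) at the price of a slightly longer nontriviality argument; it also delivers the minimality of the limit as a byproduct rather than via the separate continuity argument the paper invokes. Your handling of the uniform $L^\infty$ bound matches the paper's, and your explicit remark that the additive $\mu|x|^{-\alpha}$ term produced by $|t|^{q_{\rho_n}-1}\leq1+|t|^{p^*_\alpha-1}$ lies in $L^{\bar q}(\Omega)$ for some $\bar q>N/(ps)$ is in fact a little more careful than the paper's claim that the $f_{\rho_n}$ satisfy \eqref{gcond} verbatim.
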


\begin{proof}
Fix a sequence $\rho_n\to 0$. The boundedness and weak convergence to a solution $w_0$ of $J_0'(w_0)=0$ of $\{w_{\rho_n}\}_n$ follows from \eqref{convc}, $J_{\rho_n}(w_{\rho_n})\geq 0$ and the previous Lemma. We claim that $\{w_{\rho_n}\}_n$ is strongly compact in $W^{s,p}_0(\Omega)$.
To this end, it suffices to modify the proof of Theorem \ref{comth}. Indeed, by the strong convergence of $\{w_{\rho_n}\}_n$ to $w_0$ in $L^r(\Omega)$ and the variable exponent Brezis-Lieb Lemma \ref{colema1}, point 2),  it holds
\[
\begin{split}
\J_{\rho_n}(w_{\rho_n}) &= \J_{0}(w_0)+\frac{1}{p}[w_{\rho_n}-w_0]_{s,p}^p-\frac{\mu}{p^*_\alpha-\rho_n}\int_\Omega \frac{|w_{\rho_n}-w_0|^{p^*_\alpha-\rho_n}}{|x|^\alpha}\, dx\\
&\qquad -\left(\frac{\mu}{p^*_\alpha-\rho_n}-\frac{\mu}{p^*_\alpha}\right)\int_\Omega\frac{|w_0|^{p^*_\alpha}}{|x|^\alpha}\, dx + o_n(1), \\
&= \J_{0}(w_0)+\frac{1}{p}[w_{\rho_n}-w_0]_{s,p}^p-\frac{\mu}{p^*_\alpha-\rho_n}\int_\Omega \frac{|w_{\rho_n}-w_0|^{p^*_\alpha-\rho_n}}{|x|^\alpha}\, dx + o_n(1)
\end{split}
\]
and similarly
\[
0=\langle \J_{\rho_n}'(w_{\rho_n}),w_{\rho_n}\rangle-\langle \J_0'(w_0),w_0\rangle
 =[w_{\rho_n}-w_0]_{s,p}^p-\mu\int_\Omega \frac{|w_{\rho_n}-w_0|^{p^*_\alpha-\rho_n}}{|x|^\alpha}\, dx+o_n(1).
\]
 By \eqref{convc} and \eqref{c1ps}, there exists $\eps>0$ such that for any sufficiently large $n$ it holds 
\[
J_{\rho_n}(w_{\rho_n})<c_{1,0}+\eps<\left(\frac{1}{p}-\frac{1}{p^*_\alpha-\rho_n}\right)\frac{S_\alpha^{\frac{N-\alpha}{ps-\alpha}}}{\mu^{\frac{p}{p^*_\alpha-p}}}.
\]
Taking advantage of the last three relations exactly as in the proof of Theorem \ref{comth}, we get
\[
\limsup_{n}\, [w_{\rho_n}-w_0]_{s,p}^p<\frac{S_\alpha^{\frac{N-\alpha}{ps-\alpha}}}{\mu^{\frac{p}{p^*_\alpha-p}}}
\]
and, similarly, we obtain the conclusion $w_{\rho_n}\to w_0$ in $W^{s,p}_0(\Omega)$.
Since $[w_{\rho_n}]_{s,p}$  is uniformly bounded from below by Lemma \ref{lemmalb}, it follows that $w_0\neq 0$. Moreover, the strong continuity of $(\rho, u)\mapsto J_\rho(u)$, together with \eqref{convc} and \eqref{neh}, provides the minimality property of $w_0$.
Finally, we prove the uniform boundedness of $\{w_{\rho_n}\}_n$. Observe that $w_{\rho_n}\to w_0$ both in $L^{p^*}(\Omega)$ and $L^{p^*_\alpha}(\Omega, dx/|x|^\alpha)$. Hence both the families
$\{w_{\rho_n}^{p^* }\}_n$ and $\{w_{\rho_n}^{p^*_\alpha}/|x|^\alpha\}_n$
are equi-integrable. Since each $w_{\rho_n}$ solves the equation  $(-\Delta_p)^s w_{\rho_n}=f_{\rho_n}(x, w_{\rho_n})$, with $\{f_{\rho_n}\}_n$ satisfying \eqref{gcond} uniformly in $n$, the equi-boundedness follows from Remark \ref{remlbound}.
\end{proof}

\section{Sign-changing solution}

We will now construct sign-changing solutions to the critical problem, following the procedure described in the introduction. 
\subsection{Subcritical case}\label{subcr}
Recall that  $u^+=\max \{0, u\}$, $u^-=\min\{ u, 0\}$ and let
\begin{equation}
\label{Nsc}
{\mathcal N}_{\rm sc}=\{ u\in {\mathcal N}: u^\pm\neq 0, \langle \J'(u), u^\pm\rangle =0\}.
\end{equation}
Clearly, any sign-changing solution to \eqref{frac1} belongs to ${\mathcal N}_{\rm sc}$. However, contrary to the local case, it is not even clear that ${\mathcal N}_{\rm sc}$ is nonempty. 
That this is actually so is the content of the following lemma.

\begin{lemma}\label{le51}
Let $u\in W^{s,p}_0(\Omega)$ be such that $u^\pm\ne 0$ and \eqref{assumptions} be fulfilled. Then the maximum
\[
\sup_{(t_1, t_2)\in \R^2} \J(t_1 u^+ +t_2 u^-)
\]
is attained at a unique $(\bar t_1, \bar t_2)\in \R^2_+$. Moreover, $(\bar t_1, \bar t_2)$ is a global maximum point if and only if
\begin{equation}
\label{maxcond}
\bar t_1\cdot \bar t_2>0, \qquad \bar t_1 u^++\bar t_2u^-\in {\mathcal N}_{\rm sc}\end{equation}
and
\begin{equation}
\label{tmon}
\langle \J'( u), u^+\rangle (\bar t_1-1)\geq 0, \qquad \langle \J'( u), u^-\rangle (\bar t_2-1)\geq 0.
\end{equation}
\end{lemma}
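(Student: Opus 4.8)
\emph{Setup.} The plan is to analyse the two–variable function $\Phi(t_1,t_2):=\J(t_1u^++t_2u^-)$, exploiting two structural consequences of the fact that $u^+$ and $u^-$ have (essentially) disjoint supports. Write $v:=u^+\ge 0$, $w:=-u^-\ge 0$, both nontrivial, so that $t_1u^++t_2u^-=t_1v-t_2w$. (i) Since $(v(x)-v(y))(w(x)-w(y))\le 0$ for a.e.\ $(x,y)\in\R^{2N}$, for all $s_1,s_2\ge 0$ one has pointwise
\[
\bigl|s_1(v(x)-v(y))-s_2(w(x)-w(y))\bigr|=s_1|v(x)-v(y)|+s_2|w(x)-w(y)|\ \ge\ \bigl|s_1(v(x)-v(y))+s_2(w(x)-w(y))\bigr|.
\]
(ii) The lower order terms split: $\int_\Omega|s_1v-s_2w|^\tau\,dx=s_1^\tau\|v\|_\tau^\tau+s_2^\tau\|w\|_\tau^\tau$ for every $\tau\ge 1$, and likewise $\int_\Omega|s_1v-s_2w|^q|x|^{-\alpha}\,dx=s_1^q\int_\Omega|v|^q|x|^{-\alpha}\,dx+s_2^q\int_\Omega|w|^q|x|^{-\alpha}\,dx$.

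\emph{Reduction and concavity.} First I reduce to the closed first quadrant $Q:=[0,\infty)^2$: integrating (i) gives $[s_1v+s_2w]_{s,p}^p\le[s_1v-s_2w]_{s,p}^p$, so $\Phi(s_1,-s_2)\le\Phi(s_1,s_2)$ for $s_1,s_2\ge 0$, and together with $\Phi(-t_1,-t_2)=\Phi(t_1,t_2)$ (evenness of $\J$) this yields $\sup_{\R^2}\Phi=\sup_Q\Phi$ and rules out maximizers in the open second and fourth quadrants. Then I perform the change of variable $\sigma_i=t_i^p$ and study $\Psi(\sigma):=\Phi(\sigma_1^{1/p},\sigma_2^{1/p})$ on $Q$. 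By (i)–(ii) the Gagliardo part of $\Psi$ equals $\tfrac1p\int_{\R^{2N}}(\sigma_1^{1/p}|v(x)-v(y)|+\sigma_2^{1/p}|w(x)-w(y)|)^p|x-y|^{-N-ps}\,dx\,dy$, and the integrand $F(\sigma)=(\sigma_1^{1/p}A+\sigma_2^{1/p}B)^p$ with $A,B\ge 0$ is concave on $Q$: it is $1$–homogeneous and superadditive, the latter because, after taking $p$-th roots, superadditivity of $F$ is exactly Minkowski's inequality in $\ell^p$ ($p\ge1$). The remaining terms $-\tfrac{\lambda}{r}(\sigma_1^{r/p}\|v\|_r^r+\sigma_2^{r/p}\|w\|_r^r)-\tfrac{\mu}{q}(\sigma_1^{q/p}\!\int|v|^q|x|^{-\alpha}+\sigma_2^{q/p}\!\int|w|^q|x|^{-\alpha})$ are concave ($r/p,q/p\ge1$) and strictly concave in $\sigma_i$ as soon as the exponent exceeds $1$; since $\max\{r,q\}>p$ and $v,w\ne 0$, $\Psi$ is strictly concave on $Q$. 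Along rays $\Psi$ is linear (the Gagliardo term is $p$–homogeneous) minus a term of exponent $\max\{r,q\}/p>1$ with a coefficient bounded below on the unit arc — when $r=p$, resp.\ $q=p$, one first absorbs the critical linear part using $[z]_{s,p}^p\ge\lambda_1\|z\|_p^p$, resp.\ $\lambda_{1,\alpha}$, together with $\lambda<\lambda_1$, resp.\ $\mu<\lambda_{1,\alpha}$ — so $\Psi\to-\infty$ at infinity in $Q$; by continuity a maximizer $(\bar\sigma_1,\bar\sigma_2)$ exists and is unique by strict concavity. Setting $\bar t_i:=\bar\sigma_i^{1/p}$ gives the unique maximizer of $\Phi$ on $Q$, hence (by the reduction) the only maximizer of $\Phi$ over $\R^2$ lying in $\overline{\R^2_+}$.

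\emph{Interiority.} I then show $\bar t_1,\bar t_2>0$. Since $\Phi(0,0)=0$ while $\Phi>0$ for small $(t_1,t_2)\in Q\setminus\{0\}$ (the one–variable computation behind \eqref{prneh}), the maximizer is not the origin. If, say, $\bar t_2=0<\bar t_1$, differentiate $\Phi(\bar t_1,\cdot)$ at $0^+$: the two Lebesgue terms contribute $0$ because $r,q>1$, whereas the Gagliardo term contributes $-p\,\bar t_1^{p-1}\langle(-\Delta_p)^sv,w\rangle$, and $\langle(-\Delta_p)^sv,w\rangle<0$ strictly since $v,w$ have disjoint supports of positive measure; hence $\partial_{t_2}\Phi(\bar t_1,0^+)>0$, contradicting maximality over $Q$. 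Thus $\bar t_1,\bar t_2>0$, i.e.\ $(\bar t_1,\bar t_2)\in\R^2_+$.

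\emph{Characterization and the monotonicity \eqref{tmon}.} Interiority gives $\nabla_{(t_1,t_2)}\Phi(\bar t_1,\bar t_2)=0$, i.e.\ $\langle\J'(\bar u),u^+\rangle=\langle\J'(\bar u),u^-\rangle=0$ for $\bar u:=\bar t_1u^++\bar t_2u^-$; as $\bar t_1,\bar t_2>0$ this reads $\bar u^\pm=\bar t_iu^\pm\ne 0$, $\langle\J'(\bar u),\bar u^\pm\rangle=0$, that is $\bar u\in{\mathcal N}_{\rm sc}$ and $\bar t_1\bar t_2>0$, which is \eqref{maxcond}; conversely any $(\bar t_1,\bar t_2)$ satisfying \eqref{maxcond} is a critical point of $\Phi$ in an open quadrant and so, by strict concavity of $\Psi$, is the maximizer. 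For \eqref{tmon} I track the sign of $\partial_{t_1}\Phi(1,1)=\langle\J'(u),u^+\rangle$: by concavity of $\sigma_1\mapsto\Psi(\sigma_1,\sigma_2)$, the restriction $t_1\mapsto\Phi(t_1,t_2)$ is unimodal for each fixed $t_2$, and by the (strict) supermodularity of $\Psi$ — the mixed second derivative of the coupling integrand $F$ is $\ge0$, and $>0$ because $v,w$ are both non-constant across the interface — its peak $\hat t_1(t_2)$ is monotone in $t_2$ and equals $\bar t_1$ at $t_2=\bar t_2$; combining these facts forces $\partial_{t_1}\Phi(1,1)$ to have the same sign as $\bar t_1-1$, which is \eqref{tmon}$_1$, and symmetrically for \eqref{tmon}$_2$. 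The main obstacle is precisely this last step: the Gagliardo coupling makes $\Phi$ genuinely non-separable, so the sign bookkeeping for \eqref{tmon} cannot be done variable-by-variable and must rely carefully on the supermodular/unimodal structure.
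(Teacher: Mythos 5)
Your treatment of existence, uniqueness, interiority of $(\bar t_1,\bar t_2)$ and of the characterization \eqref{maxcond} is correct and essentially the paper's argument: the same reduction to the first quadrant via the disjoint supports, the same substitution $\sigma_i=t_i^p$, and the same concavity of the coupling term (which you justify via $1$--homogeneity plus Minkowski superadditivity, where the paper merely asserts it); your interiority step via the strict negativity of $\langle(-\Delta_p)^s u^+,-u^-\rangle$ is a clean variant of the paper's boundary-derivative computation. Up to and including \eqref{maxcond} the proposal is fine.

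The gap is in \eqref{tmon}: concavity plus supermodularity of $\Psi$ do \emph{not} force $\partial_{t_1}\Phi(1,1)$ and $\bar t_1-1$ to have the same sign. Unimodality gives $\operatorname{sign}\partial_{t_1}\Phi(1,1)=\operatorname{sign}(\hat t_1(1)-1)$ and Topkis gives that $\hat t_1$ is nondecreasing, but $\bar t_1=\hat t_1(\bar t_2)$, and when $\bar t_2<1$ the monotonicity runs the wrong way: $\hat t_1(1)>1$ is perfectly compatible with $\hat t_1(\bar t_2)<1$. Concretely, $\psi(x,y)=-2x^2-2y^2+3xy+\tfrac{11}{10}x$ is strictly concave and strictly supermodular on $\R^2_+$, coercive, with unique maximum at $(\bar x,\bar y)=(\tfrac{22}{35},\tfrac{33}{70})$, yet $\partial_x\psi(1,1)=\tfrac{1}{10}>0$ while $\bar x<1$, so the product in \eqref{tmon} would be negative; here $\hat x(y)=(3y+1.1)/4$ is increasing and $\hat x(1)>1$, but $\bar y<1$ drags $\bar x=\hat x(\bar y)$ below $1$. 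Hence every structural property you invoke is satisfied while the conclusion fails, and ``combining these facts'' cannot close the argument; you would need to use the specific form of $\Psi$, not just its concavity/supermodularity class. The paper instead obtains \eqref{tmon} from the gradient-monotonicity inequality $(\nabla\psi(\xi)-\nabla\psi(\eta))\cdot(\eta-\xi)\ge 0$ with $\xi=(1,1)$ and $\eta=(\bar s_1,1)$, $(1,\bar s_2)$. Note also that what is actually used downstream (Theorem \ref{subc}) is only the joint implication ``$\langle\J'(u),u^+\rangle\le 0$ and $\langle\J'(u),u^-\rangle\le 0$ imply $\bar t_1\le 1$ and $\bar t_2\le 1$'', which your tools \emph{do} yield: combine $\nabla\psi(1,1)\cdot(\bar s-(1,1))\ge\psi(\bar s)-\psi(1,1)\ge 0$ with the lattice form of supermodularity $\psi(\bar s\vee(1,1))+\psi(\bar s\wedge(1,1))\ge\psi(1,1)+\psi(\bar s)$. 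As written, however, your proof of \eqref{tmon} does not go through.
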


\begin{proof}
Letting $\Omega_{\pm}={\rm supp}(u^\pm)$ and write explicitly
\[
\begin{split}
\J(t_1 u^++t_2 u^-)=
|t_1|^pA_+&+
|t_2|^pA_- -|t_1|^rB_+-|t_2|^rB_--|t_1|^qC_+-|t_2|^qC_-\\
&+ \frac{2}{p}\int_{\Omega_+\times \Omega_-}\frac{|t_1 u^+(x)-t_2 u^-(y)|^p}{|x-y|^{N+ps}}\, dx\, dy
\end{split}
\]
where
\[
A_\pm=\frac{1}{p}\int_{\Omega_\mp^c\times \Omega_\mp^c}\frac{|u^+(x)-u^+(y)|^p}{|x-y|^{N+ps}}\, dx\, dy\quad B_\pm=\frac{\lambda}{r}\int_{\Omega_\pm}|u|^r\, dx\quad
C_\pm=\frac{\mu}{q}\int_{\Omega_\pm}\frac{|u|^q}{|x|^\alpha}\, dx.
\]
Thanks to $\max\{q, r\}>p$ and $u^\pm\ne 0$, it is readily checked that
\[
\lim_{|t_1|+|t_2|\to +\infty}\J(t_1 u^++t_2 u^-)=-\infty,
\]
so that a maximum exists.
Since $u^-\le 0$, analyzing the mixed integral term shows that the maximum must be attained on $t_1, t_2\geq 0$ (or on $t_1, t_2\leq 0$), which we will suppose henceforth. Let us consider the function
\[
\R^2_+\ni (s_1, s_2)\mapsto \psi(s_1, s_2)=\J(s_1^{1/p}u^++s_2^{1/p}u^-).
\]
It holds
\begin{equation}
\label{pas}
\begin{split}
\psi(s_1, s_2)&=s_1A_++s_2A_--s_1^{r/p}B_+-s_2^{r/p}B_--s_1^{q/p}C_+-s_2^{q/p}C_-\\
&\quad+ \frac{2}{p}\int_{\Omega_+\times \Omega_-}\frac{|s_1^{1/p} u^+(x)-s_2^{1/p} u^-(y)|^p}{|x-y|^{N+ps}}\, dx\, dy.
\end{split}
\end{equation}
and
\begin{equation}
\label{nablapsi}
\begin{split}
\frac{\partial\psi}{\partial s_1}(s_1, s_2)&=\frac{s_1^{(1-p)/p}}{p}\langle \J'(s_1^{1/p}u^++s_2^{1/p}u^-), u^+\rangle,\\
\frac{\partial\psi}{\partial s_2}(s_1, s_2)&=\frac{s_2^{(1-p)/p}}{p}\langle \J'(s_1^{1/p}u^++s_2^{1/p}u^-), u^-\rangle.
\end{split}
\end{equation}
The terms of $\psi$ in \eqref{pas} before the integral adds up to a strictly concave function, since $\min\{r, q\}> p$. Moreover for any $a, b\ge 0$, the function
\[
\R^2_+\ \ni (s_1, s_2)\mapsto |s_1^{1/p}a+s_2^{1/p}b|^p
\]
is also concave, which implies by integration that the last integral in \eqref{pas} is also a concave function of $(s_1, s_2)\in \R^2_+$. Therefore $\psi$
is strictly concave in $\R^2_+$. A direct computation shows that for $(s_1, s_2)\ne (0, 0)$
\[
\left.\frac{\partial}{\partial s_1 } \psi \, \right|_{s_2=0}=
\begin{cases}
\dfrac{[u^+]_{s,p}^p}{p}&\text{if $\min\{r, q\}>p$},\\[10pt]
\dfrac{[u^+]_{s,p}^p}{p}-\dfrac{\lambda}{p}\|u^+\|_p^p&\text{if $r=p<q$},\\[10pt]
\dfrac{[u^+]_{s,p}^p}{p}-\dfrac{\mu}{p}\left\|\dfrac{u^+}{|x|^{\alpha/p}}\right\|_p^p&\text{if $q=p<r$},
\end{cases}
\]
\[
 \left.\frac{\partial}{\partial s_2} \psi\, \right|_{s_1=0}=
\begin{cases}
\dfrac{[u^-]_{s,p}^p}{p}&\text{if $\min\{r, q\}>p$},\\[10pt]
\dfrac{[u^-]_{s,p}^p}{p}-\dfrac{\lambda}{p}\|u^-\|_p^p&\text{if $r=p<q$},\\[10pt]
\dfrac{[u^-]_{s,p}^p}{p}-\dfrac{\mu}{p}\left\|\dfrac{u^-}{|x|^{\alpha/p}}\right\|_p^p&\text{if $q=p<r$},
\end{cases}
\]
In both cases the derivatives are strictly positive due to \eqref{assumptions}, hence the maximum of $\psi$ is attained in the interior of $\R^2_+$ and $\psi$
 has its (unique) maximum at $(\bar s_1, \bar s_2)$ if and only if $\nabla \psi(\bar s_1, \bar s_2)=(0, 0)$, which corresponds to the unique maximum for $\J(t_1u^++t_2u^-)$ setting $\bar t_i=\bar s_i^{1/p}$, $i=1, 2$. Explicitly computing $\nabla \psi(\bar s_1, \bar s_2)=0$ through \eqref{nablapsi} gives conditions \eqref{maxcond}.

To prove \eqref{tmon}, observe that the concavity of $\psi$ is equivalent to
\[
(\nabla \psi (\xi)-\nabla \psi (\eta))\cdot(\eta-\xi)\geq 0, \qquad \forall \xi, \eta\in \R_+^2.
\]
 Letting $\xi=(1, 1)$ and alternatively $\eta=(\bar s_1, 1)$ or $\eta=(1, \bar s_2)$, where $(\bar s_1, \bar s_2)$ is the maximum point for $\psi$, we obtain
\[
\frac{\partial \psi}{\partial s_1}(1, 1)(\bar s_1-1)\geq 0, \qquad \frac{\partial \psi}{\partial s_2}(1, 1)(\bar s_2-1)\geq 0.
\]
Using \eqref{nablapsi} in these relations gives \eqref{tmon}, since $\bar s_i-1$ and $\bar s_i^{1/p}-1$ have the same sign.
\end{proof}

\begin{theorem}[Subcritical case]\label{subc}
Let \eqref{assumptions} be fulfilled with $q<p^*_\alpha$ (thus necessarily $\alpha<ps$). Then the problem
\begin{equation}
\label{c2}
c_2:=\inf_{u\in {\mathcal N}_{\rm sc}}\J(u)=\inf_{u^\pm\neq 0} \sup_{(t_1, t_2)\in \R^2} \J(t_1 u^+ +t_2 u^-)
\end{equation}
has a solution $v$ which is a sign-changing critical point for $\J$ of minimal energy.
\end{theorem}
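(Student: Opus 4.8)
The plan is to obtain $v$ by direct minimization over $\mathcal N_{\rm sc}$ and then to promote it to a free critical point by a deformation argument; the subcritical assumption enters only through the compactness of the relevant embeddings and the boundedness of minimizing sequences.

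First I would record that the two levels in \eqref{c2} coincide, which is essentially a corollary of Lemma \ref{le51}: if $u\in\mathcal N_{\rm sc}$ then $\langle J'(u),u^\pm\rangle=0$, so by \eqref{nablapsi} the point $(1,1)$ is a critical point of the strictly concave map $(s_1,s_2)\mapsto J(s_1^{1/p}u^++s_2^{1/p}u^-)$, hence its unique maximum, and therefore $J(u)=\sup_{(t_1,t_2)\in\R^2}J(t_1u^++t_2u^-)$; this yields $\inf_{\mathcal N_{\rm sc}}J\ge\inf_{u^\pm\ne0}\sup_{(t_1,t_2)}J(t_1u^++t_2u^-)$, while the reverse inequality is immediate since for any $u$ with $u^\pm\ne0$ the maximizing pair $(\bar t_1,\bar t_2)$ from Lemma \ref{le51} obeys $\bar t_1u^++\bar t_2u^-\in\mathcal N_{\rm sc}$. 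In particular $\mathcal N_{\rm sc}\ne\emptyset$, so $c_2<\infty$.

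Next I would minimize: let $\{u_n\}\subset\mathcal N_{\rm sc}$ with $J(u_n)\to c_2$. Since $\langle J'(u_n),u_n\rangle=0$ and $\{J(u_n)\}$ is bounded, the boundedness part of the proof of Theorem \ref{comth} (Case 1, $\alpha<ps$, both exponents subcritical; see also Remark \ref{gps}) gives that $\{u_n\}$ is bounded in $W^{s,p}_0(\Omega)$, so up to a subsequence $u_n\weakto u$, and by the compact embeddings $W^{s,p}_0(\Omega)\hookrightarrow L^r(\Omega)$ and $W^{s,p}_0(\Omega)\hookrightarrow L^q(\Omega,dx/|x|^\alpha)$ (Lemma \ref{sob-func}, using $r<p^*$ and $q<p^*_\alpha$) also $u_n^\pm\to u^\pm$ strongly in these spaces and a.e. The delicate point is that $u$ is still sign-changing: as in Remark \ref{remneh}, from $\langle J'(u_n),u_n^+\rangle=0$ and Lemma \ref{25} one gets $\langle J'(u_n^+),u_n^+\rangle\le0$, hence $[u_n^+]_{s,p}\ge\delta_0$ by Lemma \ref{lemmalb}; moreover Lemma \ref{25} also gives $\delta_0^p\le[u_n^+]_{s,p}^p\le\langle(-\Delta_p)^su_n,u_n^+\rangle=\lambda\|u_n^+\|_{L^r}^r+\mu\int_\Omega(u_n^+)^q|x|^{-\alpha}\,dx$, whose right-hand side converges to $\lambda\|u^+\|_{L^r}^r+\mu\int_\Omega(u^+)^q|x|^{-\alpha}\,dx$, so $u^+\not\equiv0$, and likewise $u^-\ne0$. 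Applying Lemma \ref{le51} to $u$ gives $\bar t_1,\bar t_2>0$ with $v:=\bar t_1u^++\bar t_2u^-\in\mathcal N_{\rm sc}$; to see $J(v)=c_2$, write $w_n:=\bar t_1u_n^++\bar t_2u_n^-\weakto v$, so weak lower semicontinuity of $[\,\cdot\,]_{s,p}^p$ and strong convergence of the lower order terms give $J(v)\le\liminf_nJ(w_n)$, while $u_n\in\mathcal N_{\rm sc}$ together with Lemma \ref{le51} give $J(w_n)\le\sup_{(t_1,t_2)}J(t_1u_n^++t_2u_n^-)=J(u_n)$, whence $J(v)\le\liminf_nJ(u_n)=c_2$; the reverse inequality holds because $v\in\mathcal N_{\rm sc}$.

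The hard part will be showing $J'(v)=0$: since $\mathcal N_{\rm sc}$ is not a manifold there is no Lagrange multiplier rule, and I would argue by contradiction via the quantitative deformation lemma, in the spirit of the classical Nehari scheme for least energy nodal solutions. Assume $J'(v)\ne0$; by the strong continuity of $J'$ (Corollary \ref{wtwcor}) there are $\sigma,\delta>0$ with $\|J'(w)\|_{W^{-s,p'}}\ge\delta$ on $\{[w-v]_{s,p}\le\sigma\}$. Because $v\in\mathcal N_{\rm sc}$, Lemma \ref{le51} makes $(1,1)$ the strict global maximum on $\R_+^2$ of $h(t_1,t_2):=J(t_1v^++t_2v^-)$, with value $c_2$; hence on the boundary of a small square $D$ about $(1,1)$ one has $\max_{\partial D}h<c_2$, and, shrinking $D$, also $t_1v^++t_2v^-\in B_\sigma(v)$ on $D$. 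The deformation lemma at level $c_2$ then yields a continuous $\eta$, equal to the identity outside $J^{-1}([c_2-2\bar\eps,c_2+2\bar\eps])$, that pushes $\{J\le c_2+\bar\eps\}\cap B_\sigma(v)$ below $c_2$ for a suitable small $\bar\eps>0$, so $\tilde\Phi(t_1,t_2):=\eta(1,t_1v^++t_2v^-)$ satisfies $J(\tilde\Phi)<c_2$ on $D$ while $\tilde\Phi$ coincides with $\Phi(t_1,t_2):=t_1v^++t_2v^-$ on $\partial D$. Writing $\phi=\tilde\Phi(t_1,t_2)$, one then checks that $(t_1,t_2)\mapsto(\langle J'(\phi),\phi^+\rangle,\langle J'(\phi),\phi^-\rangle)$ has nonzero Brouwer degree on $D$ — on $\partial D$ it coincides, up to positive scaling of its components, with the gradient of the strictly concave function of Lemma \ref{le51} at its interior maximum, whose degree is $1$ — so it vanishes at some interior $(t_1^*,t_2^*)$, which delivers an element of $\mathcal N_{\rm sc}$ of energy $<c_2$, contradicting \eqref{c2}. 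The technical points to watch are the continuity of $w\mapsto w^\pm$ on $W^{s,p}_0(\Omega)$, used both in the degree computation and to ensure that, for $\sigma$ small, the bounds $[v^\pm]_{s,p}\ge\delta_0$ keep the deformed family genuinely sign-changing. Finally, as a critical point with $v^\pm\ne0$, $v$ is a sign-changing solution of \eqref{frac1}, and since every sign-changing solution lies in $\mathcal N_{\rm sc}$, $J(v)=c_2$ makes it of minimal energy among them.
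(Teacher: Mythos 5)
Your proposal is correct and follows the same overall skeleton as the paper (direct minimization over ${\mathcal N}_{\rm sc}$ made possible by Lemma \ref{le51}, boundedness and compactness from subcriticality, then a degree-theoretic contradiction to upgrade the constrained minimizer to a free critical point), but both key steps are carried out by genuinely different devices. For attainment of $c_2$, the paper proves that the weak limit $v$ of the minimizing sequence itself lies in ${\mathcal N}_{\rm sc}$: it first shows $u\mapsto\langle \J'(u),u^\pm\rangle$ is weakly sequentially l.s.c.\ (Fatou), so $\langle \J'(v),v^\pm\rangle\le 0$, and then excludes strict inequality by comparing $\J(\Phi(v))$ with $c_2$ via \eqref{maxcond}--\eqref{tmon}; strong convergence $v_n\to v$ follows. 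You instead project the weak limit onto ${\mathcal N}_{\rm sc}$ and close the estimate by weak lower semicontinuity of $\J$ along the projected sequence $w_n=\bar t_1u_n^++\bar t_2u_n^-$ together with $\J(w_n)\le \J(u_n)$; this is shorter and avoids proving strong convergence of the minimizing sequence (which the paper obtains as a byproduct). For criticality, the paper perturbs the two-parameter family $t_1v^++t_2v^-$ along a fixed descent direction $\varphi$ with $\langle \J'(v),\varphi\rangle<-1$ and applies degree theory to the resulting field $G_\delta$, whereas you invoke the quantitative deformation lemma and compute the degree of the deformed field, in the spirit of the classical least-energy nodal solution scheme. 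Both routes are sound; the technical points you flag — continuity of $w\mapsto w^\pm$ on $W^{s,p}_0(\Omega)$, controlling the displacement of the deformation so that the zero of the deformed field still has nontrivial positive and negative parts, and the fact that multiplying the components of $\nabla\psi$ by positive factors and composing with $(t_1,t_2)\mapsto(t_1^p,t_2^p)$ does not change the (nonzero) degree — are precisely the ones that must be written out carefully, and they are all standard.
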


\begin{proof}
The second equality in \eqref{c2} follows from the previous Lemma. Since ${\mathcal N}_{\rm sc}\subseteq {\mathcal N}$ we immediately infer $c_2\geq c_1>0$. Due to Lemma \ref{25}
\[
\langle \J'(u), u^\pm\rangle\ge \langle \J'(u^\pm), u^\pm\rangle, \qquad \forall u\in W^{s,p}_0(\Omega),
\]
so we deduce
\[
\langle \J'(u^\pm), u^\pm\rangle\le 0,\quad \forall  u\in {\mathcal N}_{\rm sc}
\]
and Lemma \ref{lemmalb} provides $\delta_0$ depending only on the parameters such that
\begin{equation}
\label{kl2}
[u^\pm]_{s,p}\ge \delta_0, \qquad \forall  u\in {\mathcal N}_{\rm sc}.
\end{equation}
Pick a minimizing sequence $\{v_n\}_n\subseteq {\mathcal N}_{\rm sc}$.
Since $\langle \J'(v_n), v_n\rangle=0$ and $\J(v_n)\to c_2$, Remark \ref{gps} ensures that $\{v_n\}_n$ is bounded and, up to subsequences, converges weakly in $W^{s,p}_0(\Omega)$ and strongly in $L^r(\Omega)$ and $L^q(\Omega, dx/|x|^\alpha)$ to some $v$. First observe that $v^\pm\neq 0$. Indeed we have
\begin{equation}
\label{gkj}
\int_\Omega \frac{|v_n^\pm|^q}{|x|^\alpha}\, dx\to \int_\Omega \frac{|v^\pm|^q}{|x|^\alpha}\, dx,\quad \int_\Omega|v_n^\pm|^{r} \, dx\to \int_\Omega|v^\pm|^{r}\, dx
\end{equation}
and since  $\langle \J'(v_n), v_n^\pm \rangle=0$ for all $n\in \mathbb{N}$, we deduce from \eqref{b} and \eqref{kl2}
\[
\begin{split}
\int_{\Omega} \lambda |v^\pm|^r+\mu\frac{|v^\pm|^q}{|x|^\alpha}\, dx&=\lim_n\int_{\Omega} \lambda |v_n^\pm|^r+\mu\frac{|v_n^\pm|^q}{|x|^\alpha}\, dx\\
&=\lim_n\langle (-\Delta_p)^s v_n, v_n^\pm\rangle\geq  \liminf_n[v_n^\pm]_{s,p}^p\geq \delta_0^p>0.
\end{split}
\]
Let us prove that $v\in {\mathcal N}_{\rm sc}$, i.e.
\begin{equation}
\label{hlj}
\langle \J'(v), v^\pm\rangle=0.
\end{equation}
The functional $u\mapsto  \langle (-\Delta_p)^s u, u^\pm\rangle$
is weakly sequentially lower semicontinuous by Fatou's lemma, since it can be represents as a nonnegative integral of the form $f(x, y, u(x), u(y))$. Therefore, also by \eqref{gkj},
$u \mapsto \langle \J'(u), u^\pm\rangle $
is weakly sequentially lower semicontinuous, and since \eqref{hlj} holds for any $v_n$, we deduce $\langle \J'(v), v^\pm\rangle\leq 0$.
Suppose that, say, $\langle \J'(v), v^+\rangle<0$ and let $\Phi(v)$ be the projection on ${\mathcal N}_{\rm sc}$ of $v$ given by  Lemma \ref{le51}, i.e.
\[
\Phi(v)=\bar t_+ v^++\bar t_-v^-, \qquad (\bar t_+, \bar t_-)={\rm Argmax}\J(t_1 v^++t_2v^-).
\]
 Then by \eqref{maxcond}, \eqref{tmon}, $\langle \J'(v), v^+\rangle< 0$ and $\langle \J'(v), v^-\rangle\leq 0$, it holds $\bar t_+<1$ and $\bar t_-\leq 1$. Since $\langle \J'(\Phi(v)), \Phi(v)\rangle=0$ we have 
\[
\begin{split}
\J(\Phi(v))&=\left(\frac{\mu}{p}-\frac{\mu}{q}\right)\int_\Omega\frac{|\Phi(v)|^q}{|x|^\alpha}\, dx+\left(\frac{\lambda}{p}-\frac{\lambda}{r}\right)\int_\Omega|\Phi(v)|^r\, dx\\
&= \left(\frac{\mu}{p}-\frac{\mu}{q}\right)\left(\bar{t}_+^q \int_\Omega\frac{|v^+|^q}{|x|^\alpha}\, dx+\bar t_-^q\int_{\Omega}\frac{|v^-|^q}{|x|^\alpha}\, dx\right)+\left(\frac{\lambda}{p}-\frac{\lambda}{r}\right)\left(\bar t_+^r\int_\Omega |v^+|^r\, dx+\bar t_-^r\int_\Omega |v^-|^r\, dx\right)  \\
&<\left(\frac{\mu}{p}-\frac{\mu}{q}\right)\int_\Omega\frac{|v|^q}{|x|^\alpha}\, dx+\left(\frac{\lambda}{p}-\frac{\lambda}{r}\right)\int_\Omega|v|^r\, dx\\
&=\lim_n\left(\frac{\mu}{p}-\frac{\mu}{q}\right)\int_\Omega\frac{|v_n|^q}{|x|^\alpha}\, dx+\left(\frac{\lambda}{p}-\frac{\lambda}{r}\right)\int_\Omega|v_n|^r\, dx=\lim_n \J(v_n)=c_2,
\end{split}
\]
which is a contradiction and proves \eqref{hlj}. This in turn implies that $v\in {\mathcal N}$ and then $v_n\to v$ strongly in $W^{s,p}_0(\Omega)$ due to
\[
\lim_n[v_n]_{s,p}^p=\lim_n\mu\int_\Omega\frac{|v_n|^q}{|x|^\alpha}\, dx+\lambda\int_\Omega|v_n|^r\, dx=\mu\int_\Omega\frac{|v|^q}{|x|^\alpha}\, dx+\lambda\int_\Omega|v|^r\, dx=[v]_{s,p}^p.
\]
and therefore $\J(v_n)\to \J(v)=c_2$, proving that $v$ solves problem \eqref{c2}.
Next we prove that $v$ is a critical point for $\J$.
Suppose by contradiction that $\J'(v)\ne 0$, then there exists $\varphi\in W^{s,p}_0(\Omega)$ such that $\langle \J'(v), \varphi\rangle<-1$ and therefore by continuity there exists a sufficiently small $\delta_0>0$ such that
\begin{equation}
\label{op}
\langle \J'(t_1 v^++t_2 v^-+\delta\varphi), \varphi\rangle<-1, \qquad \text{if }\ |1-t_1|, |1-t_2|, |\delta|<\delta_0.
\end{equation}
The function $\psi$  in Lemma \ref{le51} is smooth and strictly concave and has a strict maximum in $(1, 1)$, therefore for some $\eps>0$
\[
\nabla \psi(s_1, s_2)\ne (0,0) \qquad \forall 0<|(s_1-1, s_2-1)|\le\eps.
\]
This implies by changing variables $(s_1, s_2)\mapsto (t_1^p, t_2^p)$  that for some $\eps'>0$, which we can suppose smaller than $\delta_0/2$
\begin{equation}
\label{se}
\left(\langle \J'(t_1 v^++t_2 v^-), v^+\rangle, \langle \J'(t_1 v^++t_2 v^-), v^-\rangle\right)\ne (0, 0),\quad \text{if }\  0<|(t_{1}-1, t_2-1)|\le \eps'.
\end{equation}
Let $B=B((1, 1), \eps')$ and for any $\delta>0$, $(t_1, t_2)\in B$ define
\[
u=u(\delta, t_1, t_2)=t_1 v^++t_2 v^-+\delta \varphi,
\]
(which is continuous in $\delta, t_1, t_2$), noting that for sufficiently small $\delta$, $u^\pm\ne 0$. Now consider the field
\[
G_\delta(t_1, t_2)=\left(\langle \J'(u), u^+\rangle, \langle \J'(u), u^-\rangle\right)\in \R^2.
\]
By \eqref{se} it holds $\inf_{\partial B}|G_\delta|>0$ for $\delta=0$ and therefore by continuity the same holds for any sufficiently small $\delta>0$.
Clearly $\delta\mapsto G_\delta$ is a homotopy, and $G_0(1, 1)=(0,0)$. By elementary degree theory, the equation $G_\delta(t_1, t_2)=(0, 0)$ has a solution $(\tilde t_1,\tilde t_2)\in B$ for some small $\delta\in \ ]0, \delta_0/2[$. If $\bar v=\tilde t_1 v^++\tilde t_2 v^-+\delta\varphi$, this amounts to $\bar v\in {\mathcal N}_{\rm sc}$. Since it holds
\[
\J(\bar v)=\J(\tilde t_1 v^++\tilde t_2 v^-)+\int_0^\delta \langle \J'( \tilde t_1 v^++\tilde t_2 v^- +t\varphi), \varphi\rangle\, dt,
\]
\eqref{op} applied to the integrand provides
\[
\J(\bar v)\le \J(\tilde t_1 v^++\tilde t_2 v^-)-\delta.
\]
Since $v\in {\mathcal N}_{\rm sc}$ Lemma \ref{le51} gives $\J(\tilde t_1 v^++\tilde t_2 v^-)\leq \J(v)$, which, inserted into the previous inequality, contradicts the minimality of $\J(v)$.

\subsection{Asymptotics for the sign-changing level}

We now consider the sign-changing critical levels for the quasicritcal approximation, hence again assuming $\alpha<ps$ and \eqref{assumptionsb}. As in Section \ref{assection1}, for any small $\rho>0$ we let $q_\rho=p^*_\alpha-\rho$, and define the functional $J_\rho$ as per \eqref{Jrho}.
Let moreover ${\mathcal N}^\rho$ and ${\mathcal N}^\rho_{\rm sc}$ be the corresponding Nehari and sign-changing Nehari (see \eqref{Nsc}) manifolds, with  $w_\rho$ and $v_\rho$ being the nonnegative and sign-changing solutions of minimal energy. Finally, we let 
\[
c_{1, \rho}=\inf_{u\in {\mathcal N}^\rho} J_\rho  (u)=J_\rho(w_{\rho}),\qquad c_{2, \rho}=\inf_{u\in {\mathcal N}^\rho_{\rm sc}} J_\rho(u)=J_\rho(v_\rho).
\]
Since $0\in \Omega$, we choose  $\delta>0$ so small that $B_{5\theta\delta}\subseteq \Omega$, with $\theta$ given in Lemma \ref{leesti}. we consider a $C^\infty$, nonnegative truncation
\[
\eta(x)=
\begin{cases}
1&\text{if $|x|\ge 3\theta$},\\
0&\text{if $|x|\le 2\theta$},
\end{cases}
\]
letting furthermore
\begin{equation}
\label{wrhodelta}
\eta_\delta(x):=\eta\left(\frac{x}{\delta}\right),\qquad w_{ \rho, \delta}:=\eta_\delta w_\rho.
\end{equation}
\begin{lemma}
Assume \eqref{assumptionsb} and \eqref{condr}. Then there exist $\rho_0>0$ and $C$ independent of $\rho$  such that for any $\rho\in \ ]0, \rho_0]$ it holds
\begin{equation}
\label{vdelta}
[w_{\rho, \delta}]_{s,p}^p\le [w_{\rho}]_{s, p}^p+C\delta^{N-ps}.
\end{equation}
and
\[
\|w_{\rho, \delta}\|_{L^r}^r\ge \|w_{\rho}\|_{L^r}^r-C\delta^N.
\]
\end{lemma}

\begin{proof}
The second estimate trivially follows from the equi-boundedness of $\{w_\rho\}_\rho$, hence we focus on \eqref{vdelta}. Using Theorem \ref{equibound}, we observe that $w_{\rho}$ solves $(-\Delta_p)^s w_{\rho}=f_\rho$ for a suitable $f_\rho$ satisfying
\[
|f_\rho(x)|\leq C(1+|x|^{-\alpha})
\]
with a constant $C$ independent of $\rho$ for small $\rho>0$. Since we are assuming $\alpha<ps$, we can  fix $\bar q\in\  ]\frac{N}{ps}, \frac{N}{\alpha}[$ and deduce that $\|f_\rho\|_{L^{\bar q}}\leq C$ for a constant independent of $\rho$ for small $\rho>0$.
We have
\[
\begin{split}
[w_\rho \eta_\delta]_{s,p}^p &\le \int_{A_1} \frac{|w_\rho(x) - w_\rho(y)|^p}{|x - y|^{N+sp}}\, dx\, dy + \int_{A_2} \frac{|w_\rho(x)\, \eta_\delta(x) - w_\rho(y)\, \eta_\delta(y)|^p}{|x - y|^{N+sp}}\, dx\, dy  \\
& + 2 \int_{A_3} \frac{|w_\rho(x)\, \eta_\delta(x) - w_\rho(y)|^p}{|x - y|^{N+sp}}\, dx\, dy =: I_1 + I_2 + 2 I_3,
\end{split}
\]
where
\[
A_1 = B_{3 \theta \delta}^c \times B_{3 \theta \delta}^c, \quad
A_2 = B_{4 \theta \delta} \times B_{4 \theta \delta},\quad
A_3 = B_{3 \theta \delta} \times B_{4 \theta \delta}^c.
\]
Clearly, $I_1 \le [w_\rho]_{s,p}^p$. To estimate $I_2$, let $\varphi \in C^\infty_0(B_{5 \theta},[0,1])$ with $\varphi = \eta$ in $B_{4 \theta}$ and let $\varphi_\delta(x) = \varphi(x/\delta)$.
By \cite[Lemma 2.5]{mpsy} it holds
\[
[w_\rho\varphi_{\delta}]_{s,p}^p\leq C\|f_\rho\|_{L^{\bar q}}^{\frac{p}{p-1}} \left(\|\varphi_\delta\|_{L^{p\bar q'}}^p+[\varphi_\delta]_{s,p}^p\right)
\]
and by scaling
\[
\|\varphi_\delta\|_{L^{p\bar q'}}^p=\delta^{N/\bar q'}\|\varphi_1\|_{L^{p\bar q'}}^p,\qquad [\varphi_\delta]_{s,p}^p=\delta^{N-ps}[\varphi_1]_{s,p}^p,
\]
so that, being $\delta<1$ and $N/\bar q'>N-ps$, we obtain
\[
[w_\rho\varphi_{\delta}]_{s,p}^p\leq C\delta^{N-ps}
\]
for some $C$ independent of $\rho$ for small $\rho>0$.
Then, with the same constant
\[
I_2 = \int_{A_2} \frac{|w_\rho(x)\, \varphi_\delta(x) - w_\rho(y)\, \varphi_\delta(y)|^p}{|x - y|^{N+sp}}\, dx\, dy \le [w_\rho \varphi_\delta]_{s,p}^p \le C \delta^{N-ps}.
\]
 Finally, we employ the uniform bound on $w_\rho$ and  $|x - y| \ge |y| - 3 \theta \delta \ge |y|/4$ on $A_3$ to get
\[
I_3 \le C \|w_\rho\|_{\infty}^p \int_{A_3} \frac{dx dy}{|y|^{N+sp}} \le C \delta^{N-sp}.
\]
This concludes the proof.
\end{proof}

\begin{lemma}
	\label{energy-control}
Let \eqref{assumptionsb} be fulfilled and moreover $r>p^*-1$.
 Then, there exists  $\kappa>0$  such that for any sufficiently small $\rho>0$ it holds
\begin{equation}
\label{estc2rho}
c_{2,\rho}\le c_{1, \rho}+\left(\frac 1 p -\frac{1}{p^*_\alpha}\right)\frac{S_\alpha^{\frac{N-\alpha}{ps-\alpha}}}{\mu^{\frac{p}{p^*_\alpha-p}}}  -\kappa .
\end{equation}
\end{lemma}

\begin{proof}
We first observe that the condition $r>p^*-1$ implies \eqref{condr} in the range $r\geq p$, which we are assuming. Moreover, since $p^*-1>p^*/p'$, we can suppose that $r> p^*/p'$
always holds true.
For $\eps<\delta<{\rm dist}(0, \partial\Omega)/5\theta$ to be chosen later consider $v=w_{\rho, \delta}- u_{\eps, \delta}$ where we set $u_{\eps, \delta}=u_{\alpha, \eps, \delta}$ as defined in \eqref{uepsdeltadef} for simplicity and $w_{\rho, \delta}$ is defined in \eqref{wrhodelta}. Clearly $v$ is sign changing with $v^+=w_{\rho, \delta}$ and $v^-=-u_{\eps, \delta}$. By construction we have
\[
c_{2, \rho}\le \sup_{ (t_1, t_2)\in \R^2_+} J_\rho(t_1 w_{\rho, \delta}-t_2 u_{\eps, \delta}).
\]
We start estimating the Gagliardo norm observing that
\begin{equation}
\label{supports}
{\rm supp}(w_{\rho, \delta})\subseteq B_{2\theta\delta}^c,\quad {\rm supp}(u_{\eps, \delta})\subseteq B_{\theta\delta},
\end{equation}
so that
\begin{equation}
\label{pqs}
\begin{split}
[t_1 w_{\rho, \delta}-t_2 u_{\eps, \delta}]_{s,p}^p\leq&
t_1^p\int_{A_1}\frac{|w_{\rho, \delta}(x)-w_{\rho, \delta}(y)|^p}{|x-y|^{N+ps}}\, dx\, dy +
t_2^p\int_{A_2}\frac{|u_{\eps, \delta}(x)-u_{\eps, \delta}(y)|^p}{|x-y|^{N+ps}}\, dx\, dy\\
&\ +
2\int_{A_3}\frac{|t_1w_{\rho, \delta}(x)+t_2u_{\eps, \delta}(y)|^p}{|x-y|^{N+ps}}\, dx\, dy
\end{split}
\end{equation}
where
\[
A_1=B_{\theta\delta}^c\times B_{\theta\delta}^c, \quad A_2=B_{\theta\delta}\times B_{\theta\delta},\quad A_3=B_{\theta\delta}^c\times B_{\theta\delta}.
\]
For the last integral in \eqref{pqs}, we use the following elementary inequalities:
\[
|a+b|^p\leq
\begin{cases}
 |a|^p+|b|^p +C(|a|^{p-1}|b|+|a||b|^{p-1})&\text{if $p\geq 2$},\\
 |a|^p+|b|^p +C|a||b|^{p-1}&\text{if $1<p<2$},
 \end{cases}
\]
with $a=t_1w_{\rho, \delta}(x)$, $b=t_2u_{\eps, \delta}(y)$. Since $w_{\rho, \delta}(x)=u_{\eps, \delta}(y)=0$ for $(x, y)\in A_3$, this gives
\[
\begin{split}
 \int_{A_3}\frac{|t_1w_{\rho, \delta}(x)+t_2u_{\eps, \delta}(y)|^p}{|x-y|^{N+ps}}\, dx\, dy&\leq t_1^p\int_{A_3}\frac{|w_{\rho, \delta}(x)-w_{\rho, \delta}(y)|^p}{|x-y|^{N+ps}}\, dx\, dy+t_2^p\int_{A_3}\frac{|u_{\eps, \delta}(x)-u_{\eps, \delta}(y)|^p}{|x-y|^{N+ps}}\, dx\, dy\\
 &\ +C
 \begin{cases}
 t_1^{p-1}t_2I_1(w_{\rho, \delta}, u_{\eps, \delta})+t_1t_2^{p-1}I_{p-1}(w_{\rho, \delta}, u_{\eps, \delta})&\text{if $p> 2$},\\
t_1t_2^{p-1}I_{p-1}(w_{\rho, \delta}, u_{\eps, \delta}) &\text{if $1<p\leq2$},
\end{cases}
\end{split}
 \]
 where, recalling \eqref{supports},
 \[
I_q(w_{\rho, \delta}, u_{\eps, \delta})=\int_{B_{2\theta\delta}^c\times B_{\theta\delta}}\frac{|w_{\rho, \delta}(x)|^{p-q}|u_{\eps, \delta}(y)|^q}{|x-y|^{N+ps}}\, dx\, dy.
\]
We reassemble the integrals over the $A_i$'s to obtain
\[
[t_1 w_{\rho, \delta}-t_2 u_{\eps, \delta}]_{s,p}^p\le t_1^p[w_{\rho, \delta}]_{s, p}^p +t_2^p[u_{\eps, \delta}]_{s,p}^p+
C t_1^{p-1}t_2I_1(w_{\rho, \delta}, u_{\eps, \delta})+C(p-2)^+t_1t_2^{p-1}I_{p-1}(w_{\rho, \delta}, u_{\eps, \delta})
\]
and then we proceed estimating the last two terms.
Since both \eqref{assumptionsb} and \eqref{condr} hold, Theorem \eqref{equibound} gives a uniform $L^\infty$ bound on $w_{\rho}$. The latter, together with the inequality  $|x-y|\geq |x|/2$ for $x\in B_{2\theta\delta}^c$ and $y\in B_{\theta\delta}$, implies
\[
I_q(w_{\rho, \delta}, u_{\eps, \delta})\leq C\int_{B_{2\theta\delta}^c}|x|^{-N-ps}\, dx\int_{\Omega} u_{\eps, \delta}^q(y)\, dy\leq C\delta^{-ps}\int_{\Omega} u_{\eps, \delta}^q(y)\, dy.
\]
Observe that $p-1<p^*/p'$, so that \eqref{setqab} for $\beta=p-1$ provides
\[
I_{p-1}(w_{\rho, \delta}, u_{\eps, \delta})\leq C\delta^{-ps} \eps^{\frac{N-ps}{p(p-1)}(p-1)}\delta^{N-\frac{N-ps}{p-1}(p-1)}=C\eps^{\frac{N-ps}{p}}.
\]
The term $I_1(w_{\rho, \delta}, u_{\eps, \delta})$ only appears if $p>2$, which in turn forces $p^*> p'$: in this case \eqref{setqab} for $\beta=1$ reads
\[
I_{1}(w_{\rho, \delta}, u_{\eps, \delta})\leq C\delta^{-ps} \eps^{\frac{N-ps}{p(p-1)}}\delta^{N-\frac{N-ps}{p-1}}=C \eps^{\frac{N-ps}{p(p-1)}}\delta^{\frac{N-ps}{p-1}(p-2)}.
\]
All in all, we get
\[
[t_1 w_{\rho, \delta}-t_2 u_{\eps, \delta}]_{s,p}^p\le t_1^p[w_{\rho, \delta}]_{s, p}^p +t_2^p[u_{\eps, \delta}]_{s,p}^p+ Ct_1t_2^{p-1}\eps^{\frac{N-ps}{p}}+ C(p-2)^+ t_1^{p-1}t_2\eps^{\frac{N-ps}{p(p-1)}}\delta^{\frac{N-ps}{p-1}(p-2)}.
\]
We set $\delta=\eps^{1/p}$, apply \eqref{vdelta} on the first term and \eqref{estqa} on the second to obtain
\begin{equation}
\label{e1}
\begin{split}
[t_1 w_{\rho, \eps^{1/p}}-t_2 u_{\eps, \eps^{1/p}}]_{s,p}^p&\le t_1^p[w_\rho]_{s,p}^p+t_2^pS_\alpha^{\frac{N-\alpha}{ps-\alpha}}+t_1^p\delta^{N-ps}+t_2^p(\eps/\delta)^{\frac{N-ps}{p-1}}\\
&\quad +Ct_1t_2^{p-1}\eps^{\frac{N-ps}{p}}+ C(p-2)^+ t_1^{p-1}t_2\eps^{\frac{N-ps}{p(p-1)}}\delta^{\frac{N-ps}{p-1}(p-2)}\\
&\le t_1^p[w_\rho]_{s,p}^p+t_2^pS_\alpha^{\frac{N-\alpha}{ps-\alpha}}+C(t_1^p+t_2^p)\eps^{\frac{N-ps}{p}},
\end{split}
\end{equation}
where we used Young's inequality as $t_1t_2^{p-1}+t_1^{p-1}t_2\leq C(t_1^p+t_2^p)$.
Next we estimate the other terms. By the equi-boundedness of $\{w_{\rho}\}_\rho$  it follows
\[
\int_\Omega w_{\rho, \eps^{1/p}}^r\, dx\ge \int_\Omega w_{\rho}^r\, dx-C\eps^{ \frac{N}{p}},\qquad  \int_\Omega \frac{w_{\rho, \eps^{1/p}}^{p^*_\alpha-\rho}}{|x|^{\alpha}}\, dx\ge \int_\Omega \frac{w_{\rho}^{p^*_\alpha-\rho}}{|x|^{\alpha}}\, dx-C\eps^{\frac{N-\alpha}{p}},
\]
so that, since $\alpha<ps$ and $\eps<1$, it holds
\begin{equation*}
\int_\Omega w_{\rho, \eps^{1/p}}^r\, dx\ge \int_\Omega w_{\rho}^r\, dx-C\eps^{ \frac{N-ps}{p}},\qquad  \int_\Omega \frac{w_{\rho, \eps^{1/p}}^{p^*_\alpha-\rho}}{|x|^{\alpha}}\, dx\ge \int_\Omega \frac{w_{\rho}^{p^*_\alpha-\rho}}{|x|^{\alpha}}\, dx-C\eps^{\frac{N-ps}{p}}.
\end{equation*}
Moreover, for any $\eps>0$ we can choose $\rho_1(\eps)>0$ so that whenever $\rho<\rho_1(\eps)$, it holds
\begin{equation*}
\int_\Omega \frac{u^{p^*_\alpha-\rho}_{\eps,\eps^{1/p}}}{|x|^{\alpha}}\, dx \ge  \int_\Omega \frac{u_{\eps,{\eps^{1/p}}}^{p^*_\alpha}}{|x|^{\alpha}}\, dx-\eps^{\frac{N-ps}{p}}\underset{\eqref{estqaf}}{\geq}  S_\alpha^{\frac{N-\alpha}{ps-\alpha}}- C(\eps/\delta)^{\frac{N-ps}{p-1}}=S_\alpha^{\frac{N-\alpha}{ps-\alpha}}- C\eps^{\frac{N-ps}{p}}.
\end{equation*}
Finally, recalling that we are assuming $r> p^*/p'$ and using \eqref{estqal}, we have
\begin{equation}
\label{e4}
\int_\Omega u_{\eps, \eps^{1/p}}^r\, dx\ge c\, \eps^{N-\frac{N-ps}{p}r}.
\end{equation}
Gathering together \eqref{e1} -- \eqref{e4} we obtain for any $\rho<\rho_1(\eps)$
\[
J_\rho(t_1 w_{\rho, \eps^{1/p}}-t_2 u_{\eps, \eps^{1/p}})\le g_{\rho, \eps}(t_1)+h_{\rho, \eps}(t_2) 
\]
where
\[
\begin{split}
g_{\rho, \eps}(t_1)&:=J_\rho(t_1 w_{\rho})+ C\left(t_1^p+t_1^r+t_1^{p^*_\alpha-\rho}\right)\eps^{\frac{N-ps}{p}},\\
 h_{\rho, \eps}(t_2)&:=S_\alpha^{\frac{N-\alpha}{ps-\alpha}}\left(\frac{t_2^p}{p} - \mu \frac{t_2^{p^*_\alpha-\rho}}{p^*_\alpha-\rho}\right) -c\, \frac{ t_2^r}{r}\eps^{N-\frac{N-ps}{p}r}+C\left(t_2^p+t_2^{p^*_\alpha-\rho}\right)\eps^{\frac{N-ps}{p}}.
 \end{split}
\]
By Theorem \ref{equibound} again, both the zero-th order terms in $J_\rho(w_\rho)$ are uniformly bounded from below as $\rho\to 0$, so that for sufficiently small $\eps>0$ and $\rho<\rho_1(\eps)$ it holds
\[
\sup_{t_1\geq 0} g_{\rho, \eps}(t_1)\leq \sup_{t_1\geq 0} J_{\rho}(t_1w_\rho)+C\eps^{\frac{N-ps}{p}}\leq c_{1, \rho} +C\eps^{\frac{N-ps}{p}}
\]
with $C$ independent of $\rho$ and $\eps$. Moreover, elementary arguments show that for sufficiently small $\eps>0$, there exists $0<a<1<A<+\infty$ independent of $\rho<\rho_1(\eps)$, such that
\[
\begin{split}
\sup_{t_2\geq 0} h_{\rho, \eps}(t_2)&=\sup_{t_2\in [a, A]} h_{\rho, \eps}\leq \sup_{t_2\geq 0}S_\alpha^{\frac{N-\alpha}{ps-\alpha}}\left(\frac{t_2^p}{p} - \mu \frac{t_2^{p^*_\alpha-\rho}}{p^*_\alpha-\rho}\right)-c \frac{a^r}{r}\eps^{N-\frac{N-ps}{p}r}+C\left(A^p+A^{p^*_\alpha}\right)\eps^{\frac{N-ps}{p}}\\
&\leq\left(\frac{1}{p}-\frac{1}{p^*_\alpha-\rho}\right)\frac{S_\alpha^{\frac{N-\alpha}{ps-\alpha}}}{\mu^{\frac{p}{p^*_\alpha-\rho-p}}}-c'\eps^{N-\frac{N-ps}{p}r}+C'\eps^{\frac{N-ps}{p}}.
\end{split}
\]
Observe that there exists $0<\rho_2(\eps)<\rho_1(\eps)$ such that if $0<\rho<\rho_2(\eps)$ then
\[
\left(\frac{1}{p}-\frac{1}{p^*_\alpha-\rho}\right)\frac{S_\alpha^{\frac{N-\alpha}{ps-\alpha}}}{\mu^{\frac{p}{p^*_\alpha-\rho-p}}}\leq \left(\frac{1}{p}-\frac{1}{p^*_\alpha}\right)\frac{S_\alpha^{\frac{N-\alpha}{ps-\alpha}}}{\mu^{\frac{p}{p^*_\alpha-p}}} + \eps^{\frac{N-ps}{p}}
\]
Therefore, for any sufficiently small $\eps>0$ and any $0<\rho<\rho_2(\eps)$ it holds
\[
\sup_{(t_1, t_2)\in \R^2_+}J_\rho(t_1 w_{\rho, \eps^{1/p}}-t_2 u_{\eps, \eps^{1/p}})\leq 
c_{1, \rho}+\left(\frac{1}{p}-\frac{1}{p^*_\alpha}\right)\frac{S_\alpha^{\frac{N-\alpha}{ps-\alpha}}}{\mu^{\frac{p}{p^*_\alpha-p}}} + C'\eps^{\frac{N-ps}{p}}- c'\eps^{N-\frac{N-ps}{p}r}
\]
Since 
\[
r>p^*-1\quad \Leftrightarrow\quad N-\frac{N-ps}{p}r<\frac{N-ps}{p},
\]
we can choose a sufficiently small $\eps>0$ such that 
\[
c'\eps^{N-\frac{N-ps}{p}r}-C'\eps^{\frac{N-ps}{p}} =:\kappa>0,
\]
obtaining the claim.
\end{proof}

\subsection{The critical case}
We will need the following variant of the Concentration-Compactness principle.
Define for any $u\in L^1_{\rm loc}(\R^N)$ and a.e. Lebesgue point $x$ for $u$, the function
\[
|D^su|^p(x):=\int_{\R^N}\frac{|u(x)-u(y)|^p}{|x-y|^{N+ps}}\, dy.
\]
In the following, by a {\em measure} in $\R^N$ we mean a continuous linear functional on $C_0(\R^N)$, often called (finite) {\em Radon} measure. As such, any measure also define a continuous linear functional on $C_b(\R^N)$, the Banach space of bounded continuous functions with the $\sup$ norm. We will employ the weak$^*$ convergence notion with respect to this latter duality, which is often called {\em tight convergence}; it is a stronger notion of convergence than the usual weak$^*$ convergence of Radon measures (i.e. the one given by duality with $C_0(\R^N)$) and a tight compactness criterion does not follows from the usual Banach-Alaoglu theorem, but instead through Prokhorov's theorem, which says that a sequence $\{\mu_n\}_n$ is sequentially weak$^*$-compact (in duality with $C_b(\R^N)$) if and only if it is bounded and {\em tight}, i.e.
\[
\forall \eps>0\quad \exists K_\eps\Subset\R^N:\quad \sup_n\mu_n(\R^N\setminus K_\eps)<\eps.
\]

\begin{lemma}[Concentration Compactness with variable exponents]
\label{cc}
Let $0\leq \alpha\leq ps<N$, $\Omega$ be a bounded open subset of $\R^N$ with $0\in \Omega$ and $u_n\rightharpoonup u$ in
$W^{s,p}_0(\Omega)$. Given $q_n=p^*_\alpha-\rho_n$ for some $\rho_n\geq 0$, $\rho_n\to 0$, there exist two measures $\nu, \sigma$ and an at most countable set $\{x_j\}_{j\in {\mathcal J}}\subseteq\overline{\Omega}$  such that, up to subsequences,
\begin{align}
\label{meas}
&|D^s u_n|^p \weakto^* \sigma,\qquad \frac{|u_n|^{q_n}}{|x|^\alpha} \weakto^* \nu  \\
 &\sigma \geq  |D^s u|^p+\sum_{j\in {\mathcal J}} \sigma_j \delta_{x_j}, \qquad \sigma_j:=\sigma(\{x_j\}), \notag \\
 \label{nu}
 &\nu = \frac{|u|^{p^*_\alpha}}{|x|^\alpha}+\sum_{j\in {\mathcal J}} \nu_j \delta_{x_j}  , \qquad \nu_j:=\nu(\{x_j\}),\\
 \label{sobL}
 &\sigma_j\geq S_\alpha \nu_j^{\frac{p}{p^*_\alpha}},\qquad \forall j\in {\mathcal J}.
\end{align}
Moreover, if $\alpha>0$, then $\{x_j\}_{j\in {\mathcal J}}=\{0\}$. 
\end{lemma}

\begin{proof}
Clearly $\{|D^s u_n|^p\}_n$ is a tight sequence of bounded measures (tightness follows as in \cite[proof of Theorem 2.5]{MS}). H\"older and  Hardy-Sobolev inequalities readily give the boundedness of the sequence of measures $\{|u_n|^{q_n}/|x|^\alpha\}_n$ so that, up to subsequences, we can suppose that weak$^*$ convergence occurs to some $\sigma$ and $\nu$ respectively. Consider first the case $u=0$ and choose any $\varphi\in C^\infty_c(\R^N)$, $\varphi\geq 0$. Again H\"older and Hardy-Sobolev inequalities provide for any $\theta>0$
\begin{equation}
\label{ppp}
\begin{split}
S_\alpha\left(\int_{{\rm supp}(\varphi)}\frac{dx}{|x|^\alpha}\right)^{\frac{p}{p^*_\alpha}-\frac{p}{q_n}}&\left(\int_{\R^N} \varphi^{q_n}\frac{|u_n|^{q_n}}{|x|^\alpha}\, dx\right)^{\frac{p}{q_n}}\leq S_\alpha\left(\int_\Omega\frac{|\varphi u_n|^{p^*_\alpha}}{|x|^\alpha}\, dx\right)^{\frac{p}{p^*_\alpha}}\leq [\varphi u_n]_{s,p}^p\\
&\leq (1+\theta)\int_{\R^N}\varphi^p|D^s u_n|^p\, dx+ C_\theta\int_{\R^N}|D^s\varphi|^p|u_n|^p\, dx.
\end{split}
\end{equation}
(see \cite[proof of Theorem 2.5]{MS}). For the left hand side observe that  since $\varphi^{q_n}\to \varphi^{p^*_\alpha}$ uniformly, it holds
\[
\int_{\R^N} \varphi^{q_n}\frac{|u_n|^{q_n}}{|x|^\alpha}\, dx\to \int_{\R^N}\varphi^{p^*_\alpha}\, d\nu.
\]
If $\alpha>0$, then $p^*_\alpha<p^*$ and if $0\notin {\rm supp}(\varphi)$, the left hand side vanishes due to the strong convergence of $\{u_n\}_n$ in $L^{p^*_\alpha}_{\rm loc}(\R^N\setminus \{0\})$. This implies that  
\begin{equation}
\label{alpha>0}
\alpha>0\quad \Rightarrow\quad {\rm supp}(\nu)\subseteq \{0\},
\end{equation}
and in this case we set $\{x_j\}_{j\in {\mathcal J}}=\{0\}$.
Since $u_n\to 0$ strongly in $L^p(\R^N)$  and $|D^s\varphi|^p\in L^\infty(\R^N)$ by \cite[Lemma 2.3]{mm}, the last term on the right hand side of \eqref{ppp} vanishes as $n\to +\infty$, so that
\[
S_\alpha\left(\int_{\R^N}\varphi^{p^*_\alpha}\, d\nu\right)^{\frac{1}{p^*_\alpha}}\leq (1+\theta) \left(\int_{\R^N} \varphi^p\, d\sigma\right)^{\frac{1}{p}}.
\]
The estimates \eqref{meas}--\eqref{sobL} now follow letting $\theta\to 0$ and applying \cite[Lemma 1.2]{Lions} in the case $\alpha=0$ (the case $\alpha>0$ is actually easier due to \eqref{alpha>0}). In the case $u\neq 0$ we observe that, for all $\varphi\in C^0_b(\R^N)$, Lemma \ref{blle} implies
\begin{equation}
\label{brezislieb}
\lim_{n\to +\infty} \int_{\R^N}\frac{|u_n|^{q_n}}{|x|^\alpha}\,\varphi\,  dx -\int_{\R^N}\frac{|u_n-u|^{q_n}}{|x|^\alpha}\, \varphi\, dx=\int_{\R^N}\frac{|u|^{p^*_\alpha}}{|x|^\alpha}\,\varphi\, dx,
\end{equation}
and proceeding as in the proof of Lemma \ref{colema1}, point (1), we have
\begin{equation}
\label{brli}
\lim_n\int_{\R^N}|D^s u_n|^p\varphi\, dx-\int_{\R^N}|D^s(u_n-u)|^p\varphi\, dx=\int_{\R^N}|D^s u|^p\varphi\, dx.
\end{equation}
Then we apply the previous case to $w_n=u_n-u\rightharpoonup 0$, letting $\nu$ and $\sigma$ be the corresponding measures. Clearly \eqref{brezislieb} immediately gives
\[
\frac{|u_n|^{q_n}}{|x|^\alpha}\weakto^* \nu + \frac{|u|^{p^*_\alpha}}{|x|^\alpha},
\]
while \eqref{brli}  gives
\[
|D^s u_n|^p\weakto^* \sigma + |D^s u|^p.
\]
 The claimed representation then follows.
\end{proof}

{\bf Notations}: Let $v_\rho$ be the sign-changing solution obtained at level $c_{2, \rho}$, with $q_\rho=p^*_\alpha-\rho$. Then \eqref{estc2rho}, \eqref{convc} and $c_{2, \rho}\geq 0$ ensure that the hypotheses of Lemma \ref{bsol} are fulfilled, and therefore for a suitable sequence $\rho_n\to 0$, $v_{\rho_n}=:v_n\weakto v$  in $W^{s,p}_0(\Omega)$ where $v$ solves the critical problem. We can apply the previous Lemma to $\{v_n\}_n$ and to both $\{v_n^\pm\}_n$,  so that, up to a not relabelled subsequence, \eqref{meas}--\eqref{sobL} hold true for  $v$ and some $\sigma, \nu$, $\{x_j\}_{j\in {\mathcal J}}$, and, correspondingly, for $v^\pm$ and suitable $\sigma^\pm$, $\nu^\pm$, $\{x_j^\pm\}_{j\in {\mathcal J}^\pm}$. For this sequence, set as usual $q_n=p^*_\alpha-\rho_n$.

\begin{lemma}
\label{lfin}
With the previous notations, suppose that $\nu_j^\pm=\nu^{\pm}(\{x_j\})\neq 0$ for some $j\in {\mathcal J}^\pm$. Then
\begin{equation}
\label{masse}
\nu_j^\pm\geq \left(\frac{S_\alpha}{\mu}\right)^{\frac{N-\alpha}{ps-\alpha}}.
\end{equation}
\end{lemma}

\begin{proof}
 Fix $j\in {\mathcal J}^\pm$ such that $\nu_j^\pm>0$, $x_j\in \overline\Omega$ and for any $\delta>0$, let $\varphi_\delta\in C^\infty_c(B_{2\delta}(x_j))$ satisfy
\[
0\leq \varphi_\delta\leq 1,\qquad \varphi\lfloor_{B_\delta(x_j)}=1,\qquad \|\nabla \varphi_\delta\|_\infty\leq C/\delta.
\]
We test the equation $J_{\rho_n}'(v_n)=0$ with $\varphi_\delta v_n^\pm\in W^{s,p}_0(\Omega)$, we get
\[
\begin{split}
\mu\int_\Omega&\frac{|v_n^\pm|^{q_n}}{|x|^\alpha}\varphi_\delta\, dx+\lambda\int_{\Omega} |v_n^\pm|^r\varphi_\delta\\
&=\int_{\R^{2N}}\frac{|v_n(x)-v_n(y)|^{p-2}(v_n(x)-v_n(y))(\varphi_\delta(x)v^\pm_n(x)-\varphi_\delta(y)v^\pm_n(y))}{|x-y|^{N+ps}}\, dx\, dy \\
&\geq \int_{\R^{N}}|D^sv^\pm_n|^{p}\varphi_\delta \, dx   -\left|\int_{\R^{2N}}\frac{|v_n(x)-v_n(y)|^{p-2}(v_n(x)-v_n(y))v^\pm_n(y)(\varphi_\delta(x)-\varphi_\delta(y))}{|x-y|^{N+ps}}\, dx\, dy\right|.
\end{split}
\]
Proceeding as in \cite[Lemma 3.1]{MS} we can pass to the limit in $n$ to obtain that
\[
\int_{\R^N}\varphi_\delta\, d\sigma\leq \mu\int_{\R^N}\varphi_\delta\, d\nu+C\Big(\int_{\R^{N}}|D^s\varphi_\delta|^p|v^\pm|^p\, dy\Big)^{\frac 1 p} +\lambda\int_{\Omega} |v^\pm|^r\varphi_\delta\, dx.
\]
From \cite[(2.14)]{MS} the last two terms on the right go to zero for $\delta\to 0$, giving $\mu\, \nu_j^\pm\geq \sigma(\{x_j\})=: \sigma_j^\pm$. This,  coupled together with \eqref{sobL}, gives \eqref{masse}.
\end{proof}

\vskip3pt
\noindent
$\bullet$ {\em Proof of Theorem \ref{sctheo}}:
Let $\rho_n\to 0$ be a sequence such that Lemmata \ref{cc} and \ref{lfin} hold for the sign-chaning solutions $v_n$ at level $c_{2, \rho_n}$, keeping the notations settled before Lemma \ref{lfin}. Since $v_n\in {\mathcal N}^{\rho_n}$ it holds
\[
J_{\rho_n}(v_n)=\left(\frac{\lambda}{p}-\frac{\lambda}{r}\right)\int_\Omega |v_n|^r\, dx +\left(\frac{\mu}{p}-\frac{\mu}{q_n}\right)\int_{\Omega}\frac{|v_n|^{q_n}}{|x|^\alpha}\, dx.
\]
By Lemma \ref{bsol}, $\{v_n\}_n$ is bounded and up to subsequences weakly converges to a solution $v\in W^{s,p}_0(\Omega)$ of the critical problem.
First observe that, through \eqref{estc2rho}, \eqref{convc}
\begin{equation}
\label{final}
\lim_{n} J_{\rho_n}(v_n)=\lim_{n}c_{2, \rho_n}\leq c_{1, 0}+\left(\frac 1 p -\frac{1}{p^*_\alpha}\right)\frac{S_\alpha^{\frac{N-\alpha}{ps-\alpha}}}{\mu^{\frac{p}{p^*_\alpha-p}}}-\kappa,
\end{equation}
for some $\kappa>0$.
We claim that $v$ is sign changing. Suppose not, say $v\leq 0$: then, without loss of generality, we can assume up to subsequences that $v_n^+\rightharpoonup 0$ in $W^{s,p}_0(\Omega)$, $v_n^+\to 0$ in $L^r(\Omega)$ and pointwise a.e..

Thanks  to $\langle J'(v_n), v_n^\pm\rangle=0$ and (\ref{b}), we infer $[v_n^\pm]_{s,p}^p\geq \delta>0$ from  Lemma \ref{lemmalb} for some $\delta$ independent of $n$. Then, with the same notations as in the previous Lemma, it cannot hold $\nu_j^+=\nu^+(\{x_j\})\equiv 0$  for all $j\in {\mathcal J}^+$, since otherwise \eqref{nu} would give
\[
\int_\Omega \frac{|v_n^+|^{q_n}}{|x|^\alpha}\, dx\to  \int_\Omega \frac{|v^+|^{p^*_\alpha}}{|x|^\alpha}\, dx=0
\]
 giving, up to subsequences, the contradiction
\[
\delta\leq \lim_{n}[v_n^+]_{s,p}^p\leq\lim_{n}\langle (-\Delta_p)^s v_n, v_n^+\rangle= \lim_{n} \lambda\int_\Omega |v_n^+|^r\, dx+\mu\int_\Omega \frac{|v_n^+|^{q_n}}{|x|^\alpha}\, dx=0
\]
(see \eqref{b} for the second inequality).
Then $\nu_j^+>0$ for some $j\in {\mathcal J}^+$ and \eqref{masse} implies
\[
\lim_{n} \left(\frac{\lambda}{p}-\frac{\lambda}{r}\right)\int_\Omega |v_n^+|^r\, dx +\left(\frac{\mu}{p}-\frac{\mu}{q_n}\right)\int_{\Omega}\frac{|v_n^+|^{q_n}}{|x|^\alpha}\, dx\geq \left(\frac \mu p -\frac{\mu}{p^*_\alpha}\right)\nu_j^+\geq  \left(\frac 1 p -\frac{1}{p^*_\alpha}\right)\frac{S_\alpha^{\frac{N-\alpha}{ps-\alpha}}}{\mu^{\frac{p}{p^*_\alpha-p}}}.
\]
Moreover,  the weak limit  of $v_n^-$ cannot be zero since otherwise we would deduce again $\nu_j^->0$ for some $j\in {\mathcal J}^-$ and
\[
\lim_{n} \left(\frac{\lambda}{p}-\frac{\lambda}{r}\right)\int_\Omega |v_n^-|^r\, dx +\left(\frac{\mu}{p}-\frac{\mu}{q_n}\right)\int_{\Omega}\frac{|v_n^-|^{q_n}}{|x|^\alpha}\, dx\geq \left(\frac 1 p -\frac{1}{p^*_\alpha}\right)\frac{S_\alpha^{\frac{N-\alpha}{ps-\alpha}}}{\mu^{\frac{p}{p^*_\alpha-p}}},
\]
which implies
\[
2\left(\frac 1 p -\frac{1}{p^*_\alpha}\right)\frac{S_\alpha^{\frac{N-\alpha}{ps-\alpha}}}{\mu^{\frac{p}{p^*_\alpha-p}}}\leq \lim_{n} \left(\frac{\lambda}{p}-\frac{\lambda}{r}\right)\int_\Omega |v_n|^r\, dx +\left(\frac{1}{p}-\frac{1}{q_n}\right)\int_{\Omega}\frac{|v_n|^{q_n}}{|x|^\alpha}\, dx=\lim_{n} J_{\rho_n}(v_n).
\]
This, together with \eqref{final}, contradicts \eqref{c1ps}.
Therefore $v=v^-$ is a nontrivial solution to the critical problem $J_0'(v)=0$ and by \eqref{neh} it satisfies $J_0(v)\geq c_{1, 0}$.
But then, proceeding as before we have
 \[
 \begin{split}
 \lim_{n} J_{\rho_n}(v_n)&=\lim_{n}\left(\frac{\lambda}{p}-\frac{\lambda}{r}\right)\int_\Omega |v_n|^r\, dx +\left(\frac{\mu}{p}-\frac{\mu}{q_n}\right)\int_{\Omega}\frac{|v_n|^{q_n}}{|x|^\alpha}\, dx\\
&= \left(\frac{\lambda}{p}-\frac{\lambda}{r}\right)\int_\Omega |v|^r\, dx +\left(\frac{\mu}{p}-\frac{\mu}{p^*_\alpha}\right)(\nu^++\nu^-)(\R^N)\\
&\underset{\eqref{nu}}{\geq} \left(\frac{\lambda}{p}-\frac{\lambda}{r}\right)\int_\Omega |v|^r\, dx +\left(\frac{\mu}{p}-\frac{\mu}{p^*_\alpha}\right)\int_{\Omega}\frac{|v^-|^{p^*_\alpha}}{|x|^\alpha}dx+\left(\frac{\mu}{p}-\frac{\mu}{p^*_\alpha}\right)\nu_j^+\\
& \underset{\eqref{masse}}{\geq} J_0(v)+\left(\frac{1}{p}-\frac{1}{p^*_\alpha}\right)\frac{S_\alpha^{\frac{N-\alpha}{ps-\alpha}}}{\mu^{\frac{p}{p^*_\alpha-p}}}\geq c_{1, 0}+\left(\frac{1}{p}-\frac{1}{p^*_\alpha}\right)\frac{S_\alpha^{\frac{N-\alpha}{ps-\alpha}}}{\mu^{\frac{p}{p^*_\alpha-p}}},
\end{split}
 \]
again contradicting \eqref{final}. Therefore $v$ is a nontrivial sign-changing solution, and it remains to prove that it minimizes the energy. To this end, fix arbitrarily $u\in {\mathcal N}^0_{\rm sc}$ and for $\rho_n\to 0$ such that $v_n\weakto v$, let $u_n=t_{1n} u^++t_{2n} u^-$ solve
 \[
  J_{\rho_n}(u_n)=\sup_{t_1, t_2\geq 0}J_{\rho_n}(t_1 u^++t_2 u^-),
 \]
so that $u_n\in {\mathcal N}_{\rm sc}^{\rho_n}$. Proceeding as in the proof of \eqref{convc}, we see that $(t_{1n}, t_{2n})\to (1, 1)$: indeed, the 
functions $-\psi_n(t_1, t_2)=-J_{\rho_n}(t_1 u^++t_2u^-)$ are strictly convex on $\R^2_+$ due to the proof of Lemma \ref{le51} and uniformly coercive, hence  
${\rm Argmin}(-\psi_n)=(t_{1n}, t_{2n})\to (1, 1)={\rm Argmin}(-\psi),$,  with $\psi(t_1, t_2)=J_0(t_1 u^++t_2u^-)$. 

This fact, together with 
\[
\begin{split}
J_{\rho_n}(u_n)&=\left(\frac{\lambda}{p}-\frac{\lambda}{r}\right)\int_\Omega |u_n|^r\, dx +\left(\frac{\mu}{p}-\frac{\mu}{q_n}\right)\int_{\Omega}\frac{|u_n|^{q_n}}{|x|^\alpha}\, dx\\
&=t_{1n}^r\left(\frac{\lambda}{p}-\frac{\lambda}{r}\right)\int_\Omega |u^+|^r\, dx + t_{1n}^{q_n}\left(\frac{\mu}{p}-\frac{\mu}{q_n}\right)\int_{\Omega}\frac{|u^+|^{q_n}}{|x|^\alpha}\, dx\\
&\quad +t_{2n}^r\left(\frac{\lambda}{p}-\frac{\lambda}{r}\right)\int_\Omega |u^-|^r\, dx +t_{2n}^{q_n}\left(\frac{\mu}{p}-\frac{\mu}{q_n}\right)\int_{\Omega}\frac{|u^-|^{q_n}}{|x|^\alpha}\, dx,
\end{split}
\]
implies that $J_{\rho_n}(u_n)\to J_0(u)$. On the other hand  Fatou's Lemma gives (up to subsequences)
\[
\begin{split}
J_0(v)&=\left(\frac{\lambda}{p}-\frac{\lambda}{r}\right)\int_\Omega |v|^r\, dx +\left(\frac{\mu}{p}-\frac{\mu}{q}\right)\int_{\Omega}\frac{|v|^{q}}{|x|^\alpha}\, dx\\
&\leq \lim_{n}\left(\frac{\lambda}{p}-\frac{\lambda}{r}\right)\int_\Omega |v_n|^r\, dx +\left(\frac{\mu}{p}-\frac{\mu}{q_n}\right)\int_{\Omega}\frac{|v_n|^{q_n}}{|x|^\alpha}\, dx=\lim_n J_{\rho_n}(v_n).
\end{split}
\]
Since $u_n\in {\mathcal N}^{\rho_n}_{\rm sc}$, we have $J_{\rho_n}(v_n)\leq J_{\rho_n}(u_n)$ by construction, therefore
\[
J_0(v)\leq \lim_n J_{\rho_n}(v_n)\leq \lim_nJ_{\rho_n}(u_n)=J_0(u)
\]
and the minimality of $v$ is proved.
\qed

In the case $\alpha=ps$ the subcritical approximation is not more available, but nevethelss if $\mu<\lambda_{1, ps}$ existence of a sign-changing solution follows through direct minimization over ${\mathcal N}_{{\rm sc}}$.

\vskip3pt
\noindent
$\bullet$ {\em Proof of Theorem \ref{sctheoH}}:
We proceed as in the proof of Theorem \ref{subc}, picking a minimizing sequence $v_n\in {\mathcal N}_{\rm sc}$ (which is not empty due to Lemma \ref{le51}) for 
\[
c_2=\inf_{v\in{\mathcal N}_{\rm sc}}\J(v).
\]
 Again $\{v_n\}_n$ is bounded and, up to subsequences, converges to some $v$ weakly in $W^{s,p}_0(\Omega)$ and strongly in $L^r(\Omega)$ and similarly for $v_n^{\pm}$.
Due to $\langle \J'(v_n^\pm), v_n^\pm\rangle\leq \langle \J'(v_n), v_n^\pm \rangle=0$,   Lemma \ref{lemmalb} gives $[v_n^\pm]_{s,p}\geq \eps_0>0$ and applying   $\langle \J'(v_n), v_n^\pm \rangle=0$, \eqref{b} and Hardy's inequality, we  obtain
\[
\begin{split}
\lambda \int_\Omega|v^\pm|^r\, dx&=\lim_n \lambda \int_\Omega|v_n^\pm|^r\, dx=\lim_n\langle (-\Delta_p)^s v_n, v_n^\pm\rangle -\mu\int_\Omega\frac{|v^\pm_n|^p}{|x|^{ps}}\, dx\\
&\geq \limsup_n\, [v_n^\pm]_{s,p}^p-\mu\int_{\Omega} \frac{|v_n^{\pm}|^p}{|x|^{ps}}\, dx\geq\limsup_n \, [v_n^\pm]_{s,p}^p\left(1-\frac{\mu}{\lambda_{1, ps}}\right)\geq \left(1-\frac{\mu}{\lambda_{1, ps}}\right)\eps_0^p>0.
\end{split}
\]
Therefore $v$ is sign-changing. Next we claim that
\begin{equation}
\label{claimlsc}
u\mapsto H_\pm(u):= \langle (-\Delta_p)^s u, u^\pm\rangle -\mu \int_{\Omega}\frac{|u^\pm|^p}{|x|^{ps}}\, dx\quad \text{is sequentially weakly l.s.c.}.
\end{equation}
In order to prove it,  define, for any $u\in W^{s,p}_0(\Omega)$,
\[
|D^s_\pm u|^p(x)=\int_{\R^N} \frac{|u(x)-u(y)|^{p-2}(u(x)-u(y))(u^\pm(x)-u^\pm(y))}{|x-y|^{N+ps}}\, dy,
\]
observing that 
\[
\langle (-\Delta_p)^s u, u^\pm\rangle =\int_{\R^N}|D^s_\pm u|^p\, dx.
\]
An elementary argument based on \eqref{glk} shows that, respectively, 
\begin{equation}
\label{ppp2}
|D^s_\pm u|^p\geq |D^s u^\pm|^p.
\end{equation}
Suppose, up to subsequences, that
\[
|D^s_\pm u_n|^p\weakto^* \widetilde{\sigma}^{\pm}, \qquad |D^s u_n^\pm|^p\weakto^*\sigma^{\pm},\qquad
\frac{|u_n^\pm|^p}{|x|^{ps}}\weakto^* \nu^{\pm}.
\]
Lemma \ref{cc} ensures that for some $\nu_0^{\pm}\geq 0$
\[
\nu^{\pm}=\frac{|u^\pm|^p}{|x|^{ps}}+\nu_0^\pm\delta_0,\qquad \sigma^{\pm}\geq \lambda_{1, ps}\nu_0^{\pm}\delta_0.
\]
Inequality \eqref{ppp2}  implies that $\widetilde{\sigma}^{\pm}\geq \sigma^{\pm}$, so that 
\begin{equation}
\label{lop}
\widetilde{\sigma}^{\pm}\geq \lambda_{1, ps}\nu_0^{\pm}\delta_0.
\end{equation}
On the other hand Fatou's Lemma ensures
\[
\lim_n\int_{\R^N}|D^s_\pm u_n|^p\varphi\, dx\geq \int_{\R^N}|D^s_\pm u|^p\varphi\, dx, \qquad \forall \varphi\in C^0_c(\R^N), \ \varphi\geq 0
\]
so that 
\[
\widetilde{\sigma}^{\pm}\geq |D^s_\pm u|^p.
\]
Gathering the latter inequality with \eqref{lop} (notice that the two measures are mutually singular) we get
\[
\widetilde{\sigma}^{\pm}\geq |D^s_\pm u|^p+\lambda_{1, ps}\nu_0^\pm\delta_0.
\]
The lower semicontinuity claim \eqref{claimlsc} now follows from $\mu<\lambda_{1, ps}$, since then
\[
H_\pm(u_n)\to \widetilde{\sigma}^\pm(\R^N) -\mu\nu^\pm(\R^N)\geq H_\pm(u)+(\lambda_{1, ps}-\mu)\nu_0^\pm\geq H(u).
\]
Therefore $u\mapsto \langle \J'(u), u^\pm\rangle$ is sequentially l.s.c. as well, proving that $\langle \J'(v), v^\pm\rangle\leq 0$. This in turn implies that $v\in {\mathcal N}_{\rm sc}$ exactly as in the proof of Theorem \ref{subc} (notice that the singular term disappears due to $q=p^*_{ps}=p$). To prove that $v_n\to v$ strongly in $W^{s,p}_0(\Omega)$ we proceed as in the proof of Theorem \ref{comth}. Indeed 
\[
0=\langle \J'(v_n), v_n\rangle-\langle \J'(v), v\rangle=[v_n-v]_{s,p}^p-\mu\int_\Omega\frac{|v_n-v|^p}{|x|^{ps}}\, dx +o_n(1),
\]
so that by Hardy's inequality
\[
[v_n-v]_{s,p}^p\left(1-\frac{\mu}{\lambda_{1, ps}}\right)\leq [v_n-v]_{s,p}^p-\mu\int_\Omega\frac{|v_n-v|^p}{|x|^{ps}}\, dx =o_n(1).
\]
This shows that $\J(v)=c_2$ by continuity, and the fact that $v$ is a critical point for $\J$ follows verbatim as in the proof of Theorem \ref{comth}.
\end{proof}
\bigskip

\bigskip
\medskip

\end{document}